\definecolor{aleacolor}{rgb}{0.16,0.59,0.78}
\renewcommand{\cite}{\citet}
\theoremstyle{plain}
\newtheorem{theorem}{Theorem}[section]                                          
\newtheorem{proposition}[theorem]{Proposition}                          
\newtheorem{lemma}[theorem]{Lemma}
\newtheorem{corollary}[theorem]{Corollary}
\theoremstyle{definition}
\newtheorem{definition}[theorem]{Definition}
\theoremstyle{remark}
\makeatletter \@addtoreset{equation}{section} \makeatother
\newcommand{\aleaIndex}[1]{\href{http://alea.impa.br/english/index_v18.htm}{\bf 18}}
\newcommand{\aleaDOI}[1]{\href{https://doi.org/10.30757/ALEA.v18-}{DOI: 10.30757/ALEA.v18-}}
\newcommand{\LL}{\mathcal{L}}
\newcommand{\R}{\ensuremath{\mathbb{R}}} % real
\newcommand{\Lp}{\ensuremath{\mathbb{L}}} % L^p space
\newcommand{\D}{\ensuremath{\mathbb{D}}} % cadlag space
\newcommand{\Lex}{\ensuremath{\mathbbm{e}}} % normalized excursion
\newcommand{\br}{\ensuremath{\mathbbm{b}}} % stable bridge
\newcommand{\bra}{\ensuremath{\mathbbm{b}^{(a)}}} % stable bridge with end in a
\newcommand{\brae}{\ensuremath{\mathbbm{b}^{(a_\varepsilon)}}}
\newcommand{\Dex}{\ensuremath{D_{\mathrm{ex}}}} % Skorokhod space for excursion
\newcommand{\Hex}{\ensuremath{H_{\mathrm{ex}}}} % "Sobolev space" for excursion
\newcommand{\Kex}{\ensuremath{K_{\mathrm{ex}}}} % "Compact space" for excursion
\newcommand{\Aex}{\ensuremath{\mathcal{A}_{\mathrm{ex}}}} % Area under normalized excursion
\newcommand{\Dbr}{\ensuremath{D_{\mathrm{br}}}}
\newcommand{\Dbra}{\ensuremath{D_\mathrm{br}^{(a)}}}
\newcommand{\Hbra}{\ensuremath{H_{\mathrm{br}}^{(a)}}}
\newcommand{\fup}{\ensuremath{f_\uparrow}}
\newcommand{\fupac}{\ensuremath{f_{\uparrow\mathrm{ac}}}}
\newcommand{\fups}{\ensuremath{f_{\uparrow\mathrm{sing}}}}
\newcommand{\fdo}{\ensuremath{f_\downarrow}}
\renewcommand{\d}{\ensuremath{\,\mathrm{d}}}
\newcommand{\dist}{\ensuremath{\mathrm{dist}}}
\newcommand{\E}[1]{\ensuremath{\mathbb{E} \left[#1 \right]}} % expectation
\newcommand{\Prob}[1]{\ensuremath{\mathbb{P} \left(#1 \right)}} % probability
\newcommand{\bE}{\ensuremath{\mathbb{E}}} % expectation 2
\newcommand{\bP}{\ensuremath{\mathbb{P}}} % probability 2
\newcommand{\1}[1]{\ensuremath{\mathbbm{1}_{\{ #1 \}}}} % indicator function
\newcommand{\Fil}{\mathcal{F}} % Filtration
\newcommand{\eps}{\ensuremath{\varepsilon}} % varepsilon
\begin{document}

\title[A large deviation principle for the excursion of an $\alpha$-stable L\'evy process]{A large deviation principle for the normalized excursion of an $\alpha$-stable Lévy process without negative jumps}

\author{Léo Dort}
\address{Univ Lyon, INSA de Lyon, CNRS UMR 5208, Institut Camille Jordan, 21 avenue Jean Capelle, 69621 Villeurbanne, France}
%\address{Institut Camille Jordan, Institut Nationale des Sciences Appliquées de Lyon, 21 avenue Jean Capelle, 69621 Villeurbanne}
%Univ. Lyon, INSA Lyon, UJM, UCBL, ECL, ICJ, UMR5208 69621 Villeurbanne, France
\email{leo.dort@insa-lyon.fr} 
\urladdr{\href{https://sites.google.com/view/leo-dort}{https://sites.google.com/view/leo-dort}}

\author{Christina Goldschmidt}
\address{Department of Statistics and Lady Margaret Hall, University of Oxford, 24-29 St Giles', Oxford OX1 3LB}
\email{goldschm@stats.ox.ac.uk} 
\urladdr{\href{https://www.stats.ox.ac.uk/~goldschm/}{https://www.stats.ox.ac.uk/$\sim$goldschm/}} 

\author{Grégory Miermont}
\address{Unité Mathématiques Pures et Appliquées, CNRS UMR 5669, École
  Normale Supérieure de Lyon, 46 allée d'Italie, 69364 Lyon Cedex 07\\
and Institut Universitaire de France}
\email{gregory-miermont@ens-lyon.fr} 
\urladdr{\href{http://perso.ens-lyon.fr/gregory.miermont/}{http://perso.ens-lyon.fr/gregory.miermont/}}

\thanks{The second author was supported by EPSRC Fellowship EP/N004833/1.}
\subjclass[2010]{60G52, 60F10, 60G51, 60G17} 
\keywords{Large deviations principles, $\alpha$-stable Lévy processes, excursions and bridges, Skorokhod topologies}

\begin{abstract}
We establish a large deviation principle for the normalized excursion
and bridge of an $\alpha$-stable L\'evy process without negative jumps, with
$1<\alpha<2$. Based on this, we derive precise asymptotics for the
tail distributions of  functionals of the normalized
excursion and bridge, in particular, the area and maximum
functionals. 
We advocate the use of the Skorokhod M1 topology, rather than
the more usual J1 topology, as we believe it is better suited to
large deviation principles for Lévy processes in general. 
\end{abstract}

\maketitle

%\begin{figure*}[h!]
%\centering
%\includegraphics[scale=0.35]{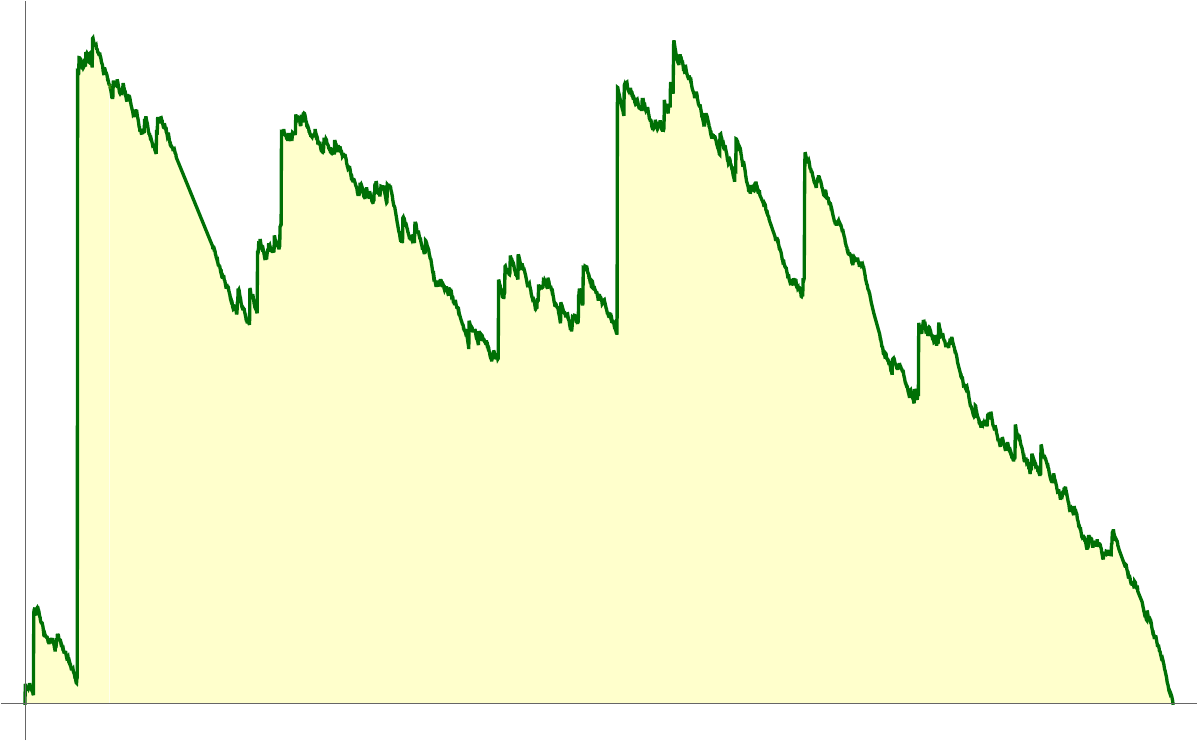}
%\caption{Simulation of the area under the normalized excursion of a
%  $\frac{4}{3}$-stable L\'evy process without negative jumps. 
%}
%\end{figure*}

\begin{center}
\includegraphics[scale=0.35]{4-3LevyArea6.pdf} \par
\end{center}
\begin{minipage}{\textwidth}
\leftskip 3pc
\rightskip 3pc
Simulation of the area under the normalized excursion of a
  $\frac{4}{3}$-stable L\'evy process without negative jumps. 
\end{minipage}

%_oOo_--------------------------------_oOo_%
%%%_oOo_---Introduction---_oOo_%%%
%_oOo_--------------------------------_oOo_%
\section{Introduction}

 Fix $\alpha \in (1,2)$. Let $L=(L_t,t\geq 0)$ be an $\alpha$-stable
 L\'evy process without negative jumps, having Laplace transform 
 $$ \E{\exp(-\lambda L_t)} = \exp(t\lambda^{\alpha})\, ,\qquad \lambda,t>0. $$ 
Our main goal in this paper is to establish a strong large deviation
principle for the normalized bridges and excursions of the process
$L$. 
This contributes to the rich study of functional large 
deviation principles ({\fontfamily{lmss}\selectfont LDP}) 
for Lévy processes and related 
processes, which has its roots in the classical theorem of 
Cramér and its extension to random walks, see Chapter 5.1 in 
\cite{Dembo-Zeitouni}, and references therein. For a Lévy process
$(X(t),t\geq 0)$, the natural setting is to consider the family of
renormalized processes
\begin{equation}
  \label{eq:22}
  X_T=(X(Tt)/T,0\leq t\leq 1). 
\end{equation}
The case where $X$ is Brownian
motion is addressed by a famous theorem of % Schilder
\cite{schilder66}, who showed a large deviation principle with speed $T$
in the space of continuous functions, where the rate function is the
Dirichlet energy. 
Other Lévy processes are addressed in the landmark
paper by %Lynch and Sethuraman 
\cite{Lynch}, which was extended in various directions in
particular by Borovkov, Mogul'ski\u{\i} and others (see, for instance,
\cite{mogulskii93,BoMo13,BoMo14,mogulskii17,KlMo19,KlLoMo20} and references therein).
However, the vast majority of the
results in the above references assume that the Lévy process has a
vanishing Gaussian part, as well as the Cramér condition that
$\E{e^{\lambda  X(1)}}<\infty$ for every $\lambda$ in a non-empty neighborhood of 
$0$. These conditions imply that the trajectories of the Lévy
process have finite variation almost surely, and that the law of $X(1)$ has
exponential tails. Various situations may occur when the Cramér
condition does not hold. The references \cite{gantert98,GaRaRe19}
consider the case of stretched exponential tails, while 
\cite{RhBlZw19} considers the situation where the
tail of $X(1)$ is regularly varying. In these cases,
large deviation principles hold with sublinear speeds, and even
logarithmic speed when the tails are regularly varying.

The case of stable Lévy processes with no negative jumps is in a sense
a boundary case of the works mentioned above, due to the asymmetric
nature of the tails of $L_1$: we have
$$\Prob{L_1>x}\asymp \frac{C_\alpha}{x^{\alpha}}\, ,\qquad 
\Prob{L_1<-x}\asymp \exp(-c_\alpha x^{\alpha'})\, ,\qquad
x\to\infty ,$$
where $\alpha'=\alpha/(\alpha-1)\in (2,\infty)$ is the conjugate exponent of
$\alpha$, $C_\alpha=-\frac{1}{\Gamma(1-\alpha)}$ and
$c_\alpha=(\alpha-1)/\alpha^{\alpha'}$.
Heuristically, although it is
``easy'' for the process to go up, it is ``costly'' for it to go
down, and large deviation probabilities may have different speeds depending on
whether the events involved allow the process to ``go down'' or not. 

However, considering bridges and excursions of such
processes is a way to root them at a given value at time $1$, which
prevents the process 
from ``going up too much''. This phenomenon is well-known and was already
exploited in \cite{ABDeJa13,Kortchemski}, in particular in the study
of the heights of
random trees. However, to our knowledge it has not been used to derive an
actual {\fontfamily{lmss}\selectfont LDP} result for stable 
excursions and bridges, and we fill this
gap in the present work. Note that {\fontfamily{lmss}\selectfont LDPs} 
for bridge-like random walks
and Lévy processes were considered under the Cramér condition in
\cite{BoMo13b}. 

Although we believe our results should have
extensions to a much larger class of Lévy process bridges and
excursions, we focus here on the particular case of stable
processes. In this case, precise estimates are known for the
transition densities and the entrance law of the excursion measure,
which allow us to provide a rather straightforward extension of the
method of proof presented by %Serlet in 
\cite{Serlet}. 
However, our proof differs from the latter in a crucial aspect. 
As is often the case in {\fontfamily{lmss}\selectfont LDP} theory, the choice of
an appropriate topology on the path space is an important matter. In \cite{Lynch}, the
authors derived a strong {\fontfamily{lmss}\selectfont LDP} for Lévy
processes as in \eqref{eq:22} in a ``weak'' topology, and observed that
the rate function is not good (in the sense that it
does not have compact level sets) in the natural Skorokhod J1
topology. In a series of papers,
Borovkov and Mogul'ski\u{\i} improved these results by
considering local versions of the {\fontfamily{lmss}\selectfont LDP} in the J1  topology, or by working
in the completion of the Skorokhod J1 metric. Unsurprisingly, similar questions
arise in our context. Indeed, since it is much less
costly for the process $L$ to go up rather than to go down, a similar
property also holds for its bridges and excursions, and this implies
that in the large deviation regimes considered in this paper (and in
stark contrast to \cite{RhBlZw19}), we cannot distinguish between
the situation where the process performs one big jump, or two jumps of
half the size at extremely close locations, precluding exponential
tightness in the J1 topology. 

Fortunately, Skorokhod introduced three other possible topologies,
called M1, J2 and M2, on spaces of
càdlàg functions, and M1 will
turn out to suit our purposes, with a very minor adaptation called M1' that was
already considered in \cite{BBRZ20} and in \cite{V21}, in the contexts
of {\fontfamily{lmss}\selectfont LDPs} for Lévy processes and random
walks with Weibull increments, and of a contraction principle for
random walk {\fontfamily{lmss}\selectfont LDPs}. We note that Mogul'ski\u{\i}
and others (\cite{BoMo14,mogulskii17}) also considered similar
topologies in
the context of processes satisfying the 
Cramér condition. The M1 topology was also used by %O'Brien 
\cite{obrien99}
to prove {\fontfamily{lmss}\selectfont LDPs} for the processes $(L\vee 1)^{\eps}$ as $\eps\downarrow0$, but
these large deviations regimes are very different from the one considered here. 
We will recall how to define a distance function $\dist$ that induces the M1' topology and  
makes $(\D[0,1],\dist)$ a Polish space. It is in this space that a strong {\fontfamily{lmss}\selectfont LDP} holds for
the excursion and bridge of the process $L$.

It might be the case that a weak LDP holds for $\eps\Lex$ in the J1 topology, or for the unconditioned scaled processes $\eps L$ in either the J1 topology or in our modified M1 topology, but we do not pursue these questions here.

%_oOo_---Main results---_oOo_%
 \subsection{Main results}\label{sec:main-results}

 Let $\Lex=(\Lex_t,0\leq t\leq 1)$ be the normalized excursion of $L$ above its past
 infimum 
 (see \cite{Chaumont}), and let $\br^{(x)}=(\br^{(x)}_t,0\leq t\leq t)$ be the bridge of $L$ from $0$
 to $x\in \R$ with unit duration. We set $\br=\br^{(0)}$. 
These processes satisfy a.s.\
$\Lex_0 = \Lex_1 =
 \br^{(x)}_0=0$, $\br^{(x)}_1=x$, and $\Lex_t >0$ for every
 $t\in(0,1)$.
 We view $\Lex$ and $\br^{(x)}$ as random variables in the space $\D[0,1]$ of ``càdlàg'' functions
 $f:[0,1]\to \R$, that is, functions which are right-continuous at
 every point $t\in [0,1)$ and have left-limits at every point $t\in (0,1]$. We denote by $f(t+)=f(t)$ and
 $f(t-)$ the right- and left-limits of $f\in \D[0,1]$ at $t$, whenever applicable.
We turn $\D[0,1]$ into a measurable space by equipping it with the $\sigma$-algebra generated by the
evaluation maps $f\mapsto f(t)$ for $t\in [0,1]$.

 Let us recall some standard definitions from the theory of large deviations. If $S$
 is a topological space, a {\em rate
 function} is a lower semicontinuous function $I:S\to [0,\infty]$,
 \textit{i.e.}\ a function such that the level sets $\LL_I(c)=\{x\in S:I(x)\leq c\}$ are
 closed for every $c\geq 0$.  A rate function is called {\em
   good} if the sets $\LL_I(c),c\geq 0$ are compact. 
 
 \begin{definition}\label{def:LDP}
 Let $\beta>0$ be fixed. A family $(X_\eps)_{\eps>0}$ of random
 elements in the space $S$ (endowed with the completed Borel $\sigma$-algebra) is
 said to satisfy the \textbf{large deviation principle} ({\fontfamily{lmss}\selectfont LDP}) with
 speed $\eps^{-\beta}$, and rate function $I$, if for every Borel set $A\subseteq S$,
 $$ - \inf_{x\in \overset{\circ}{A}} I(x) \leq
 \liminf_{\varepsilon\downarrow0} \eps^{\beta} \log \Prob{ X_\eps \in
   A } \leq \limsup_{\eps\downarrow0} \eps^{\beta} \log \Prob{ X_\eps
   \in A } \leq - \inf_{x \in \bar{A}} I(x). $$ 
 The family $(X_\eps)_{\eps>0}$ is said to be \textbf{exponentially tight} with speed $\eps^{-\beta}$, if for each $M<+\infty$, there exists a compact set $K_M$ such that
 \begin{equation}\label{eq:tight}
 \limsup_{\eps\downarrow0} \eps^{\beta} \log \Prob{ X_\eps \in
   K_M^{\text{c}} } \le - M. 
 \end{equation}
 \end{definition}

We define a rate function in the following
way.
Assume that $f\in \D[0,1]$ has bounded variation, meaning that it can be
written as the difference $g-h$ of two non-decreasing functions
$g,h\in \D[0,1]$. 
This decomposition is not unique; however, it becomes unique if we
further require that $g(0)=0$ and that the Stieltjes measures $\d g$
and $\d h$ are mutually singular. This ``minimal'' decomposition is
classically called the \textit{Jordan decomposition} of $f$, and we
write $g=\fup, h=\fdo$. 
 We denote by $H^{(\alpha)}$ the subspace of $\D[0,1]$ of functions
 $f$ with bounded variation such that $\fdo$ is an absolutely
 continuous function with derivative $\fdo' \in \Lp^{\alpha'}[0,1]$,
 where $\alpha'=\alpha/(\alpha-1)\in (2,\infty)$ is the conjugate exponent of
 $\alpha$. 
Set
\begin{equation}\label{eq:5}
 \Dex[0,1]  = \big\{ f \in \D[0,1]: \: f(1) = 0, \, f \geq 
   0 \big\} , 
\end{equation}
(note that we do not impose the usual condition that $f(0)=0$) and define
\begin{equation}
   \Hex = H^{(\alpha)} \cap \Dex[0,1] .\label{eq:Hex} 
\end{equation}
Let us define the rate function $I_\Lex:\D[0,1]\to [0,\infty]$ by the formula
 \begin{equation}\label{eq:rate-function}
   I_\Lex(f) =\begin{cases}
                        \displaystyle{c_\alpha \int_0^1 (\fdo'(s))^{\alpha'} \d s}
                        &\mbox{ if }  f\in\Hex, \\
                        +\infty  &\mbox{ otherwise}.
                      \end{cases}
      \end{equation}
                    Alternatively, we may
 define $I_\Lex$ as follows for nonnegative functions $f$ with bounded
 variation and such that $f(1)=0$. Write $f=f_{\mathrm{ac}}+f_{\mathrm{sing}}$ as
 a sum of an 
 absolutely continuous part and a singular part, and let
 $f_{\mathrm{sing}}=f_{\mathrm{sing}\uparrow}-f_{\mathrm{sing}\downarrow}$ be the Jordan
 decomposition of $f_{\mathrm{sing}}$. Then we have
 \begin{equation}
   \label{eq:17}
    I_\Lex(f)=c_\alpha\int_0^1(f_{\mathrm{ac}}'(s))_-^{\alpha'}\d s + \infty
    \cdot f_{\mathrm{sing}\downarrow}(1) ,
  \end{equation}
  where $(f_{\mathrm{ac}}'(s))_-$ denotes the negative part of
 $f_{\mathrm{ac}}'(s)$, 
and we let $I_\Lex(f)=\infty$ if $f$ is not nonnegative, or if $f$
does not satisfy $f(1)=0$, or if $f$ does not have bounded
variation. Here, the term $\infty\cdot f_{\mathrm{sing}\downarrow}(1)$
accounts for the fact that \eqref{eq:rate-function} yields $+\infty$
when $f_\downarrow$ is not absolutely continuous. 
In
this way, we note that the shape of the rate function is exactly that
involved in \cite[Theorem 5.1]{Lynch} (although this theorem does not
apply in our context) and the other references mentioned
earlier in this introduction.

We defer a discussion of the topology until Section
\ref{sec:basic-results-skor}, where we will introduce the distance
$\mathrm{dist}$ on the set $\D[0,1]$. 
We can now state our main result. 

  \begin{theorem}[{\fontfamily{lmss}\selectfont LDP} for the normalized excursion $\Lex$]\label{thm:LDP-excursion}
The laws of $(\eps\Lex_t)_{t\in[0,1]}$ satisfy an {\fontfamily{lmss}\selectfont LDP} in
$(\D[0,1],\dist)$ as $\eps\downarrow0$ with speed $\eps^{-\alpha'}$
and good rate function $I_\Lex$. 
 \end{theorem}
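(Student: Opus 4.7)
The plan is to adapt the method of Serlet, who established an analogous LDP for the normalized Brownian excursion, to the stable setting. The argument has three main ingredients: (i) a finite-dimensional LDP obtained via explicit density estimates; (ii) exponential tightness in the M1' topology; (iii) a projective-limit-type argument combined with an identification of the rate function. Goodness of $I_\Lex$ follows from the bound $\sup f \leq (I_\Lex(f)/c_\alpha)^{1/\alpha'}$ (H\"older applied to $\fdo' \in \Lp^{\alpha'}$) together with the observation that, since the M1' modulus vanishes on monotone functions, the M1' modulus of $f=\fup-\fdo$ is controlled by the modulus of continuity of $\fdo$ alone, which on level sets is uniformly $(1-1/\alpha')$-H\"older.

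For (i), fix $0 = t_0 < t_1 < \cdots < t_n = 1$. By the Markov property of $\Lex$ together with the absolute-continuity relation between the excursion measure and the law of $L$ killed at its first passage below zero, the joint density of $(\Lex_{t_1},\ldots,\Lex_{t_{n-1}})$ can be expressed via the entrance/exit laws of the excursion and a product of stable transition densities $p_t(x)$. The sharp asymptotics
\[
 p_t(x) \sim C_\alpha\, t\, x^{-\alpha-1}\text{ as }x\to+\infty,\qquad p_t(-x) \asymp \exp\bigl(-c_\alpha x^{\alpha'}/t^{\alpha'-1}\bigr)\text{ as }x\to+\infty,
\]
together with analogous estimates on the entrance law, imply that $\eps^{\alpha'}\log$ of the joint density of $(\eps\Lex_{t_1},\ldots,\eps\Lex_{t_{n-1}})$ converges as $\eps\downarrow 0$ to
\[
 -\,c_\alpha \sum_{i=1}^n \frac{(x_{i-1}-x_i)_+^{\alpha'}}{(t_i-t_{i-1})^{\alpha'-1}},\qquad x_0=x_n=0,\ x_i\geq 0,
\]
and to $-\infty$ outside the nonnegative cone. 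The polynomial right tail contributes nothing at speed $\eps^{-\alpha'}$, whereas the Weibull left tail produces the $c_\alpha(\cdot)^{\alpha'}$ terms, in agreement with the shape of $I_\Lex$. A standard Laplace-type argument yields the LDP for all finite-dimensional marginals.

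For (ii), the sup-norm part of the M1' compactness criterion is handled by the tail estimate $\Prob{\sup_{t\in[0,1]}\Lex_t > x}\asymp\exp(-cx^{\alpha'})$, which (in stark contrast to the polynomial tail of $L_1$) follows from conditioning the path to close up at time $1$. Control of the M1' modulus reduces, by the monotone-part argument above, to control of downward oscillations of the path at short time scales; such an oscillation of size $\eta$ on an interval of length $\delta$ has probability of order $\exp(-c\eta^{\alpha'}/\delta^{\alpha'-1})$ by the same Weibull estimate, which is summable over dyadic partitions after taking $\eps^{\alpha'}\log$. Combining the finite-dimensional LDP with exponential tightness then yields the full LDP on $(\D[0,1],\dist)$, once the projective-limit rate function (the supremum over partitions of the finite-dimensional rates above) is identified with $I_\Lex$; this identification is routine when $\fdo$ is absolutely continuous (Riemann sums converge to $\int(\fdo')^{\alpha'}\d s$) and forces $+\infty$ as soon as $\fdo$ has a singular decreasing part, in agreement with~\eqref{eq:17}.

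The main obstacle is step (ii). The reason for working with M1' rather than J1 is precisely that a burst of several positive jumps of total size $h$ clustered in a short time window is indistinguishable in M1' from a single jump of size $h$; since positive jumps have only polynomial tails and are therefore invisible at speed $\eps^{-\alpha'}$, they cannot contribute to the rate, and M1' is what lets one absorb them without spoiling tightness---exactly what fails in J1. The technical heart of the argument is a uniform exponential control of the downward M1' oscillations of $\eps\Lex$ at all dyadic scales, obtained by combining the pointwise Weibull estimate with a chaining-type argument.
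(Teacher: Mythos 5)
Your outline tracks the paper's proof closely: finite-dimensional LDP via sharp asymptotics for the stable transition density and entrance law (Proposition~\ref{prop:LDP-finite-dim}), exponential tightness in the M1' metric via a Kolmogorov-type chaining criterion on the $M$-oscillation (Theorem~\ref{sec:basic-results-skor-2} and Proposition~\ref{prop:tightness}), a Dawson--G\"artner / weak-topology argument, and identification of the projective-limit rate with $I_\Lex$ (Propositions~\ref{sec:proof-crefthm:ldp-ex} and~\ref{prop:criteria-Hex}). The characterization of goodness via H\"older control of $\fdo$ on level sets is also the same as Lemma~\ref{sec:facts-about-rate-1}.

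There is, however, a concrete gap in step (ii). The oscillation estimate you propose to chain over dyadic scales (probability $\exp(-c\eta^{\alpha'}/\delta^{\alpha'-1})$ for a downward oscillation of size $\eta$ on a window of length $\delta$) is \emph{not} available uniformly up to time $1$ for the excursion: the exponential-moment bound on $M(\Lex_s,\Lex_t,\Lex_u)$ is proved in the paper (Lemma~\ref{sec:expon-tightn-norm}) only for $s\le t\le u\le 1-\delta$, and the constant $C(\alpha,\delta)$ blows up as $\delta\downarrow0$ because the proof requires a uniform bound on $\int_z^\infty j_a(x)\,q_{x-z}(1-b)\,\d x$, which picks up a factor of order $\|p_\delta\|_\infty/\delta$. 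Chaining naively across $[0,1]$ therefore fails near the right endpoint. The paper resolves this by splitting the interval and proving a separate endpoint estimate (Lemma~\ref{sec:expon-tightn-norm-1}) for $\Prob{\sup_{1-\delta\le t\le 1}\eps\Lex_t\ge\gamma}$, using the first-passage-bridge representation \eqref{eq:15}--\eqref{eq:16} and the Weibull decay of $q_x$; the compact set in $(\D[0,1],\mathrm{dist})$ is then assembled from compactness of the stopped process $\Lex^{(n)}_{\cdot\wedge(1-\delta_n)}$ together with the smallness of $\sup_{[1-2\delta_n,1]}\eps\Lex$. Your intuition that ``conditioning the path to close up at time $1$'' gives the Weibull tail is exactly the idea behind this endpoint lemma, but it needs to be formulated as a separate estimate rather than absorbed into the chaining. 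Relatedly, you should not invoke $\Prob{\sup_{t}\Lex_t>x}\asymp\exp(-cx^{\alpha'})$ as an input: in the paper this is Corollary~\ref{cor:exact-asymp-Profeta-sup}, a \emph{consequence} of the LDP. The correct input is the exponential tightness of the one-dimensional marginals $\eps\Lex_q$ at a countable dense set of times $q$, which follows from the $n=1$ case of Proposition~\ref{prop:LDP-finite-dim}; the sup-norm bound is then recovered from these together with the oscillation control, as in the proof of Theorem~\ref{sec:basic-results-skor-2}, item~2.
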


We also have the following negative result, proved in Section \ref{subsection:no-J1}.

 \begin{proposition}\label{thm:no-J1}
The rate function $I_\Lex$ is not a good rate function for the Skorokhod J1
topology. Moreover, the laws of $(\eps\Lex_t)_{t\in[0,1]},0<\eps<1$, are not exponentially tight
   in $\D[0,1]$ endowed with the Skorokhod J1 topology. 
 \end{proposition}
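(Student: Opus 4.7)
My plan is to establish the first assertion by an explicit construction, then to deduce the second from it together with Theorem~\ref{thm:LDP-excursion}.

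For the first assertion, I would exhibit a sequence $(f_n)_{n \geq 4}$ in $\Hex$ with $I_\Lex(f_n)$ constant in $n$ that fails to be J1-relatively compact. A natural candidate is
\[
  f_n(t) \;=\; \tfrac{1}{2}\1{t \geq 1/4} + \tfrac{1}{2}\1{t \geq 1/4+1/n} - 4\,(t-3/4)^+, \qquad t \in [0,1].
\]
Each $f_n$ belongs to $\Dex[0,1]$, and its Jordan decomposition has downward part $(f_n)_\downarrow(t) = 4(t-3/4)^+$, which is absolutely continuous with derivative $4\,\1{t \in [3/4,1]}$ in $\Lp^{\alpha'}[0,1]$. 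Hence $f_n \in \Hex$ and $I_\Lex(f_n) = c_\alpha \cdot 4^{\alpha'-1} =: c_\star$ for every $n$. To rule out J1-convergent subsequences, I would invoke the standard characterization of J1-relative compactness through the Billingsley modulus $w'$: for any $\delta > 0$ and any $n$ with $1/n < \delta$, no admissible partition of $[0,1]$ with consecutive gaps strictly greater than $\delta$ can contain both jump times $1/4$ and $1/4+1/n$, so at least one of the two up-jumps of size $\tfrac{1}{2}$ sits strictly inside some partition interval, forcing the oscillation there to be at least $\tfrac{1}{2}$. Therefore $\sup_n w'(f_n,\delta) \geq \tfrac{1}{2}$ for every $\delta > 0$, so $(f_n)$ is not J1-relatively compact, and the level set $\LL_{I_\Lex}(c_\star)$ is not J1-compact. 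The only slightly subtle point here is this modulus estimate; the rest is direct verification from the formula \eqref{eq:rate-function}.

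For the second assertion, I would argue by contradiction. Suppose $(\eps\Lex)_{0 < \eps < 1}$ is J1-exponentially tight and fix $M > c_\star$. Let $K_M$ be a J1-compact set with $\limsup_{\eps \downarrow 0} \eps^{\alpha'} \log \Prob{\eps\Lex \notin K_M} \leq -M$. Since the J1 topology is finer than the M1' topology induced by $\dist$, the identity map $(\D[0,1], \text{J1}) \to (\D[0,1], \dist)$ is continuous, so $K_M$ is $\dist$-compact and its complement is $\dist$-open. Applying the lower bound of Theorem~\ref{thm:LDP-excursion} to $K_M^c$ gives
\[
  \liminf_{\eps \downarrow 0} \eps^{\alpha'} \log \Prob{\eps\Lex \in K_M^c} \;\geq\; -\inf_{K_M^c} I_\Lex.
\]
The same continuity makes $I_\Lex$ J1-lower semicontinuous, so $\LL_{I_\Lex}(c_\star)$ is J1-closed; by the first assertion it is not J1-compact, hence cannot be a subset of $K_M$ (a J1-closed subset of a J1-compact set is J1-compact), and some $f \in \LL_{I_\Lex}(c_\star)$ lies in $K_M^c$. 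This yields $\inf_{K_M^c} I_\Lex \leq c_\star < M$, contradicting the assumed upper bound $-M$ via $\liminf \leq \limsup$. The main strategic choice here is to bypass any direct jump-counting estimate on the normalized excursion by routing the argument through the LDP lower bound supplied by Theorem~\ref{thm:LDP-excursion}; the real work sits in the first assertion.
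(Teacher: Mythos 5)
Your proof is correct and establishes both assertions, but the route differs from the paper's in an instructive way. The paper does not spell out the first assertion; its sketch directly attacks exponential tightness by using the explicit marginal formula \eqref{eq:marginal-laws} together with the density estimates \eqref{eq:10}--\eqref{eq:13} to obtain, for the two-jump event $\{\eps\Lex_\delta\in[1,2],\ \eps\Lex_{2\delta}\in[3,4]\}$, a lower bound of the form $C(\delta)\,\eps^{2\alpha+2}\exp\big(-c_\alpha(3/\eps(1-2\delta))^{\alpha'}\big)$; this shows that $\liminf_\eps \eps^{\alpha'}\log\Prob{\omega_{J1}(\eps\Lex,2\delta_0)>1/2}\geq -3^{\alpha'}c_\alpha$ for every $\delta_0>0$, which contradicts exponential tightness since any J1-compact set has $\omega_{J1}$-modulus tending to $0$ uniformly. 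Your treatment instead verifies the first assertion explicitly via the two-jump sequence $(f_n)\subset\LL_{I_\Lex}(c_\star)$ and Billingsley's modulus $w'$, and then deduces the second abstractly by applying the $\dist$-LDP lower bound of Theorem~\ref{thm:LDP-excursion} to the $\dist$-open complement of a putative J1-compact $K_M$ and observing that a non-J1-compact, J1-closed level set must escape $K_M$. Your approach is more economical, leveraging the already-proved LDP rather than redoing density estimates, and it makes visible the logical link between the two halves of the proposition; the paper's direct estimate is more self-contained and gives a concrete quantitative floor on the escape rate. Both hinge on the same phenomenon: the excursion can cheaply perform several nearby jumps of the same sign, an event the J1 modulus cannot tolerate but the M1' modulus absorbs.
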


 Recall that in a Polish space, an {\fontfamily{lmss}\selectfont LDP} 
 with a good rate function
 implies exponential tightness (see \cite[Lemma 2.6]{Lynch} or
 \cite{Grunwald}). However, since the rate function $I_\Lex$ is not good, this does not rule
 out the possibility that 
 $\eps \Lex$ satisfies the {\fontfamily{lmss}\selectfont LDP} in 
 the J1 topology, and we do not know
 whether this property holds or not. Note that this problem has been
 the topic of several references dealing with random walks and Lévy processes under the
 Cramér condition, including
 \cite{Lynch,mogulskii93,BoMo13}, and at present there
 is no complete answer to this 
 question in that context either. However, we believe that 
 the M1 topology is a more natural choice in this context, since it is
arguably a strong topology for which the
 rate functions are better behaved.

 Theorem \ref{thm:LDP-excursion} allows us to deduce  general 
 {\fontfamily{lmss}\selectfont LDPs}
 for functionals of the normalized stable excursion. This extends 
the results of \cite{Fill-Janson}
 dealing with  Brownian excursions to the case of stable excursions, and was
 the initial motivation for the present work. 
 Define the sets
 \begin{equation}
   \label{eq:6}
     K^{(\alpha)}=\big\{f \in H^{(\alpha)}:\Vert \fdo' \Vert_{\alpha'} % = \int_0^1      \fdo'(s)^{\alpha'} \d s 
       \leq 1\big\}\, ,\qquad \Kex  = K^{(\alpha)} \cap \Dex[0,1] .
   \end{equation}
   It will be shown in Lemma \ref{sec:proof-crefthm:ldp-fu} below that $\Kex$ is a compact subset of
   $(\D[0,1],\dist)$. This will imply, using the contraction
 principle, the following logarithmic asymptotics for the right 
 tails of functionals of  $\Lex$. 
 
 \begin{theorem}[Logarithmic asymptotics for the right tails of functionals of $\Lex$]\label{thm:LDP-functional}
 Let $\Phi$ be a continuous nonnegative
 functional $\Dex[0,1]\to \R_+$ which is also positive-homogeneous in the
 sense that $\Phi(\lambda f) = \lambda \Phi(f)$ for every $f \in
 \Dex[0,1]$ and $\lambda \geq 0$, and not identically $0$ on $\Kex$.
Define  $X=\Phi(\Lex)$ and 
let 
 $$ \gamma_\Phi = \max\big\{ \Phi(f): \: f \in \Kex \big\} .$$
 Then $\eps\Phi(\Lex)$ satisfies an {\fontfamily{lmss}\selectfont LDP} in $\R_+$ as $\eps\downarrow0$ with speed $\eps^{-\alpha'}$ and good rate function $J_\Phi(x)=c_\alpha\left(\frac{x}{\gamma_\Phi}\right)^{\alpha'}$. 
 In particular, 
 \begin{eqnarray}
 - \log \Prob{ X>x } \sim
   c_\alpha\left(\frac{x}{\gamma_\Phi}\right)^{\alpha'} & & \quad
                                                            \textrm{
                                                            as }
                                                            x\rightarrow+\infty\, . \label{eq:asymp-functional}
 \end{eqnarray}
\end{theorem}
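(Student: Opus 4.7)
The strategy is to deduce the result from Theorem~\ref{thm:LDP-excursion} via the contraction principle (e.g.\ \cite[Theorem~4.2.1]{Dembo-Zeitouni}). Since $\Phi:\Dex[0,1]\to\R_+$ is continuous, $\eps\Lex$ takes values in $\Dex[0,1]$ almost surely, and the rate function $I_\Lex$ is infinite outside $\Hex\subset\Dex[0,1]$, the contraction principle yields an LDP for $\Phi(\eps\Lex)$ at speed $\eps^{-\alpha'}$ with good rate function
\[
J_\Phi(x) \;=\; \inf\bigl\{I_\Lex(f) : f\in\Dex[0,1],\ \Phi(f)=x\bigr\}.
\]
By positive homogeneity, $\Phi(\eps\Lex) = \eps\Phi(\Lex) = \eps X$, so this is exactly the LDP asserted in the theorem.

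The bulk of the work is then the computation of $J_\Phi$, which I would carry out by a scaling argument on the compact unit ball $\Kex$. The case $x=0$ is immediate ($f\equiv 0$). For $x>0$, any admissible $f$ (with $I_\Lex(f)<\infty$ and $\Phi(f)=x$) lies in $\Hex$, and its parameter $\lambda:=\Vert\fdo'\Vert_{\alpha'}$ must be positive: otherwise $\fdo\equiv 0$ forces $f=\fup$ nondecreasing, and combined with $f\geq 0$ and $f(1)=0$ this would give $f\equiv 0$, contradicting $\Phi(f)=x>0$. Then $f/\lambda\in\Kex$, and by positive homogeneity
\[
x \;=\; \lambda\,\Phi(f/\lambda) \;\leq\; \lambda\,\gamma_\Phi,
\]
so $I_\Lex(f) = c_\alpha\lambda^{\alpha'} \geq c_\alpha(x/\gamma_\Phi)^{\alpha'}$. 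For the matching upper bound, compactness of $\Kex$ in $(\D[0,1],\dist)$ (from the forward-referenced Lemma~\ref{sec:proof-crefthm:ldp-fu}) together with continuity of $\Phi$ ensures that some $g^*\in\Kex$ attains $\gamma_\Phi$; moreover one must have $\Vert g^*_\downarrow{}'\Vert_{\alpha'}=1$, since otherwise rescaling $g^*$ by $1/\Vert g^*_\downarrow{}'\Vert_{\alpha'}$ (well-defined because $\gamma_\Phi>0$ by assumption) produces a point of $\Kex$ with strictly larger $\Phi$-value. Then $f^*:=(x/\gamma_\Phi)g^*$ satisfies $\Phi(f^*)=x$ and $I_\Lex(f^*)=c_\alpha(x/\gamma_\Phi)^{\alpha'}$, giving $J_\Phi(x)=c_\alpha(x/\gamma_\Phi)^{\alpha'}$.

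The tail estimate \eqref{eq:asymp-functional} is then a direct specialization of the LDP to the half-line. Since $J_\Phi$ is continuous and strictly increasing on $[0,\infty)$, one has $\inf_{y>1}J_\Phi(y)=\inf_{y\geq 1}J_\Phi(y)=J_\Phi(1)=c_\alpha/\gamma_\Phi^{\alpha'}$. Setting $\eps=1/x$ gives $\{X>x\}=\{\eps X>1\}$, so applying the LDP bounds to $(1,\infty)$ and $[1,\infty)$ yields
\[
\lim_{x\to\infty} x^{-\alpha'}\log\Prob{X>x} \;=\; -\,\frac{c_\alpha}{\gamma_\Phi^{\alpha'}},
\]
which is \eqref{eq:asymp-functional}. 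The only substantive input beyond Theorem~\ref{thm:LDP-excursion} is the compactness of $\Kex$ in $(\D[0,1],\dist)$, which is the forward-referenced lemma; everything else is routine. The delicate point to keep in mind is the extremal scaling in $\Kex$, ensuring that the maximizer of $\Phi$ saturates the constraint $\Vert\fdo'\Vert_{\alpha'}\leq 1$, which is where the hypothesis that $\Phi$ is not identically zero on $\Kex$ is used.
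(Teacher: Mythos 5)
Your proposal is correct and follows essentially the same route as the paper: apply the contraction principle to Theorem~\ref{thm:LDP-excursion}, identify the induced rate function by homogeneity and compactness of $\Kex$, and specialize to $A=(1,\infty)$ with $\eps=1/x$ to get \eqref{eq:asymp-functional}. The only difference is presentational — you compute $J_\Phi$ via matched lower and upper bounds (and handle $x=0$ explicitly), whereas the paper performs the same identification through a single chain of reparametrizations $f\mapsto \tfrac{x}{\Phi(f)}f$, $f\mapsto f/\|\fdo'\|_{\alpha'}$; both are equivalent, and your observation that any maximizer $g^\ast\in\Kex$ must saturate $\|g^\ast_\downarrow{}'\|_{\alpha'}=1$ is the correct way to see that the two bounds match.
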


Using \cite[Theorem 4.5]{Chassaing-Janson}, we have that 
\eqref{eq:asymp-functional} implies the following 
asymptotics for the Laplace transform and the moments: 
 \begin{alignat}{3}
 \log \E{ e^{tX} } & \sim (\gamma_\Phi t)^\alpha && \quad\quad \textrm{ as } t\rightarrow+\infty, \label{eq:asymp-generating-function} \\
 \E{ X^n }^{1/n} & \sim \alpha^{\frac{1}{\alpha}}\gamma_\Phi\left(\frac{n}{ e } \right)^{1/\alpha'} && \quad\quad \textrm{ as } n\rightarrow+\infty. \label{eq:asymp-moments}
 \end{alignat}

 Taking as a particular case the functions $\Phi(f)=\int_0^1f(s) \d s$
 and $\Phi(f)=\sup_{s\in [0,1]}f(s)$, we obtain the following result, 
which improves \cite[Corollary 1.2]{Profeta} by pinning down the
precise constants. 
 
 \begin{corollary}[Logarithmic asymptotics for the right tails of the area under $\Lex$]\label{cor:exact-asymp-Profeta}
Set 
 $$ \Aex = \int_0^1 \Lex_t \, \d t .$$
  Then  it holds that
 \begin{alignat}{3}\label{eq:1}
 -\log \Prob{ \Aex > x }
 & \sim c_\alpha(\alpha+1)^{\frac{1}{\alpha-1}}
x^{\alpha'} && \quad\quad \textrm{ as } x\rightarrow+\infty, \\
 \log \E{ e^{t \Aex} }
 & \sim \frac{ t^\alpha}{\alpha+1} && \quad\quad \textrm{ as } t\rightarrow+\infty, \\
 \E{ \Aex^n }^{1/n} 
 & \sim \left(\frac{\alpha}{\alpha+1}\right)^{\frac{1}{\alpha}} \left(\frac{n}{e}\right)^{1/\alpha'} && \quad\quad \textrm{ as } n\rightarrow+\infty.
 \end{alignat}
 \end{corollary}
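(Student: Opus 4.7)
The plan is to apply Theorem \ref{thm:LDP-functional} with $\Phi(f) = \int_0^1 f(s)\,\d s$, and then invoke the general formulas \eqref{eq:asymp-generating-function}--\eqref{eq:asymp-moments} once $\gamma_\Phi$ has been computed explicitly. First I would verify that $\Phi$ is positive-homogeneous (trivial), not identically zero on $\Kex$ (any strictly positive $f\in\Kex$ witnesses this), and continuous in the metric $\dist$. Continuity should follow from the fact that $\dist$-convergence $f_n\to f$ in $\D[0,1]$ implies $f_n(t)\to f(t)$ at every continuity point of $f$ (a co-countable set), combined with the uniform bound $\sup_n \|f_n\|_\infty<\infty$ coming from uniform convergence of parametric representations; bounded convergence then yields $\Phi(f_n)\to\Phi(f)$.

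The main computation is the value
\begin{equation*}
\gamma_\Phi = \max\left\{ \int_0^1 f(s)\,\d s : f \in \Kex \right\}.
\end{equation*}
For $f\in\Kex$ with Jordan decomposition $f = \fup - \fdo$, the condition $f(1)=0$ forces $\fup(1) = \fdo(1)$, so setting $D(s)=\fdo(1)-\fdo(s)$ and $U(s)=\fup(1)-\fup(s)\geq 0$, we have $f(s) = D(s) - U(s)$. Dropping the nonnegative term $U$, then applying Fubini and Hölder with $1/\alpha + 1/\alpha' = 1$, we get
\begin{equation*}
\int_0^1 f(s)\,\d s \;\leq\; \int_0^1 D(s)\,\d s \;=\; \int_0^1 t\,\fdo'(t)\,\d t \;\leq\; \left(\int_0^1 t^\alpha\,\d t\right)^{1/\alpha} \|\fdo'\|_{\alpha'} \;\leq\; (\alpha+1)^{-1/\alpha}.
\end{equation*}
Equality is attained for $f_*(s) = (\alpha+1)^{1/\alpha'}(1-s^\alpha)/\alpha$, which lies in $\Kex$ since $\fup \equiv 0$ and $\fdo'(t) = (\alpha+1)^{1/\alpha'} t^{\alpha-1}$ has $\|\fdo'\|_{\alpha'}=1$; hence $\gamma_\Phi = (\alpha+1)^{-1/\alpha}$.

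Substituting this value into Theorem \ref{thm:LDP-functional} and using $\alpha'/\alpha = 1/(\alpha-1)$, one obtains $\gamma_\Phi^{-\alpha'} = (\alpha+1)^{1/(\alpha-1)}$, which gives the first asymptotic of the corollary. Plugging $(\gamma_\Phi t)^\alpha = t^\alpha/(\alpha+1)$ into \eqref{eq:asymp-generating-function} and $\alpha^{1/\alpha}\gamma_\Phi = (\alpha/(\alpha+1))^{1/\alpha}$ into \eqref{eq:asymp-moments} yields the other two. The only genuine subtlety is the variational step: one might worry that a non-trivial increasing part $\fup$ (with its free upward jumps) could beat the purely decreasing absolutely continuous optimum. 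The Hölder bound above dispels this concern by discarding $U(s)$, which encodes the fact that whatever is gained by letting $\fup$ rise at one point must be compensated later by an extra decrease of $\fdo$, paid for in the rate function.
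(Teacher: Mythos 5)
Your proposal is correct and follows essentially the same route as the paper: apply Theorem \ref{thm:LDP-functional} to $\Phi(f)=\int_0^1 f$, reduce the variational problem by discarding the nonnegative contribution of the increasing part (your $U(s)\geq 0$ versus the paper's observation $\fup(s)\leq \fdo(1)$), apply H\"older against $s\mapsto s$, and exhibit the extremal $f_*(s)=\frac{(\alpha+1)^{1/\alpha'}}{\alpha}(1-s^\alpha)$. The only cosmetic difference is that you obtain $\int_0^1(\fdo(1)-\fdo(s))\,\d s=\int_0^1 t\,\fdo'(t)\,\d t$ via Fubini where the paper integrates by parts, which is the same computation.
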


\begin{corollary}[Logarithmic asymptotics for the right tails of the supremum of $\Lex$]\label{cor:exact-asymp-Profeta-sup}
 It holds that
 \begin{alignat}{3}\label{eq:18}
 -\log \Prob{ \sup_{0\le t\le1} \Lex_t > x }
 & \sim c_\alpha x^{\alpha'} && \quad\quad \textrm{ as } x\rightarrow+\infty, \\
 \log \E{ e^{t \sup_{0\le s\le1} \Lex_s} }
 & \sim t^\alpha && \quad\quad \textrm{ as } t\rightarrow+\infty, \\
 \E{ \Big(\sup_{0\le t\le1} \Lex_t\Big)^n }^{1/n} 
 & \sim \alpha^{1/\alpha} \left(\frac{n}{e}\right)^{1/\alpha'} && \quad\quad \textrm{ as } n\rightarrow+\infty.
 \end{alignat}
 \end{corollary}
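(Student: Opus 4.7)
The plan is to apply Theorem \ref{thm:LDP-functional} to the functional $\Phi(f) = \sup_{t\in[0,1]} f(t)$, so the whole corollary reduces to three hypothesis verifications plus a single extremal computation. Positive homogeneity of $\Phi$ is immediate, and $\Phi$ is not identically zero on $\Kex$: the linear function $f(t)=1-t$ has Jordan decomposition (with the normalizing convention $\fup(0)=0$) given by $\fup\equiv 0$ and $\fdo(t)=t-1$, so $\fdo'\equiv 1$ lies in the unit ball of $\Lp^{\alpha'}[0,1]$ and $f\in\Kex$, with $\Phi(f)=1$. Continuity of $\Phi$ in the distance $\dist$ is one of the principal advantages of Skorokhod's M1 topology, and is inherited by the M1' variant, since M1 convergence can be characterized by uniform convergence of parametrizations of the completed graphs of càdlàg functions, and the supremum is precisely the maximal height attained along such a graph.

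The only real work is the evaluation of
\[
\gamma_\Phi \;=\; \max\bigl\{\sup_{t\in[0,1]} f(t) : f\in\Kex\bigr\}.
\]
Fix $f\in\Kex$ and let $M=\sup_{t\in[0,1]}f(t)$, attained (or approached as a left limit) at some first time $\tau\in[0,1]$. The constraints $f\geq 0$ and $f(1)=0$ force the function to drop by at least $M$ on $[\tau,1]$. Because positive increments and up-jumps carry no cost in the rate function, the extremal profile has no such increments after time $\tau$, so $\int_\tau^1 \fdo'(s)\,\d s \geq M$. Hölder's inequality then yields
\[
M \;\leq\; \|\fdo'\|_{\alpha'}\,(1-\tau)^{1/\alpha} \;\leq\; 1,
\]
with equality attained at $\tau=0$ with $\fdo'\equiv 1$, i.e. by the linear profile $f(t)=1-t$ above. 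Hence $\gamma_\Phi=1$.

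Substituting $\gamma_\Phi=1$ into Theorem \ref{thm:LDP-functional} gives the rate function $J_\Phi(x)=c_\alpha x^{\alpha'}$ and the first line of \eqref{eq:18}. The Laplace transform and moment asymptotics are then immediate specializations of \eqref{eq:asymp-generating-function} and \eqref{eq:asymp-moments}. The only point I would pay genuine attention to is confirming that no configuration in $\Kex$ beats the simple linear descent — a short argument, since inserting any additional up-jump on $[\tau,1]$ merely enlarges the total descent required to return to $0$ at time $1$ without increasing the global maximum $M$.
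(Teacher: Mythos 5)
Your proposal is correct and follows essentially the same route as the paper: apply Theorem \ref{thm:LDP-functional} to the supremum functional and compute $\gamma_\Phi=1$ via a H\"older upper bound together with the linear profile $f(t)=1-t$ for the lower bound. The only difference is cosmetic: you introduce the maximising time $\tau$ and apply H\"older on $[\tau,1]$, whereas the paper simply bounds $f(t)\le \fdo(1)-\fdo(t)\le\int_0^1|\fdo'(s)|\,\d s\le\Vert\fdo'\Vert_{\alpha'}\le 1$ for every $t$, which also makes your variational heuristic about ``no up-jumps after $\tau$'' unnecessary --- the inequality $\fdo(1)-\fdo(\tau)\ge M$ already holds for every $f\in\Kex$, extremal or not, because $\fup$ is non-decreasing.
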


%_oOo_---LDPs for bridges---_oOo_%
\subsection{Large deviation principles for bridges}\label{sec:large-devi-princ}

Theorems \ref{thm:LDP-excursion} and \ref{thm:LDP-functional} have
counterparts for bridges of the Lévy process $L$. For $a\in \R$, we
let 
 \begin{align*}
 \Dbra[0,1] & = \left\{ f \in \D[0,1]: \: f(1) = a \right\} , \\
 \Hbra & = H^{(\alpha)} \cap \Dbra[0,1] . 
 \end{align*}
 We may now state the main results concerning the stable L\'evy bridge.
 In this statement and the rest of the paper, for $a\in\R$, we let
 $a_+=a\vee 0$ and $a_-=(-a)_+$ be the positive and negative parts of
 $a$. 
  
 \begin{theorem}[{\fontfamily{lmss}\selectfont LDP} for the stable bridge $\bra$]\label{thm:LDP-bridge}
 Let $(a_\eps)_{\eps>0}$ be such that $\eps a_\eps \to a$ as
 $\eps\to0$. Then the laws of $(\eps\brae_t)_{t\in[0,1]}$ satisfy an
 {\fontfamily{lmss}\selectfont LDP} in $(\D[0,1],\dist)$ as
 $\eps\downarrow0$ with speed $\eps^{-\alpha'}$ and good rate function
 $I_{\br,a}$ defined by 
 \begin{equation}
 I_{\br,a}(f) = \begin{cases} c_\alpha \left(\int_0^1
     |\fdo'(s)|^{\alpha'} \d s - (a_-)^{\alpha'}\right) &

   \mbox{if } f\in\Hbra, \\+\infty & \mbox{ otherwise. }
 \end{cases}
\end{equation}
\end{theorem}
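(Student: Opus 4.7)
The plan is to adapt the Serlet-style discretization argument used in the proof of Theorem~\ref{thm:LDP-excursion}, replacing the entrance law of the excursion measure by the explicit finite-dimensional density of the bridge. Namely, for $0 = t_0 < t_1 < \cdots < t_k < t_{k+1} = 1$ and $y_0 = 0$, $y_{k+1} = a_\eps$,
\begin{equation*}
\bP(\brae_{t_1} \in \d y_1, \ldots, \brae_{t_k} \in \d y_k) = \frac{1}{p_1(a_\eps)} \prod_{i=0}^{k} p_{t_{i+1} - t_i}(y_{i+1} - y_i) \, \d y_1 \cdots \d y_k,
\end{equation*}
where $p_t$ denotes the density of $L_t$.

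The new ingredient relative to Theorem~\ref{thm:LDP-excursion} is a sharp density asymptotic for $p_1$. From the Cramér-type left-tail estimate $\bP(L_1 < -x) \asymp \exp(-c_\alpha x^{\alpha'})$ recalled in the introduction, a standard saddle-point refinement yields $-\log p_1(x) \sim c_\alpha |x|^{\alpha'}$ as $x \to -\infty$, while the polynomial right tail gives $-\log p_1(x) = O(\log x)$ as $x \to +\infty$. Combined with $\eps a_\eps \to a$, this produces
\begin{equation*}
\lim_{\eps \downarrow 0} \eps^{\alpha'} \log p_1(a_\eps) = -c_\alpha (a_-)^{\alpha'},
\end{equation*}
which is precisely the subtracted constant appearing in $I_{\br,a}$.

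Substituting the stable scaling $p_t(y) = t^{-1/\alpha} p_1(y t^{-1/\alpha})$ into each factor in the density formula and invoking Laplace's method for the resulting integrals, one derives an LDP for the finite-dimensional marginals $(\eps\brae_{t_1},\ldots,\eps\brae_{t_k})$ with speed $\eps^{-\alpha'}$ and rate function
\begin{equation*}
c_\alpha \sum_{i=0}^{k} (t_{i+1} - t_i) \left(\frac{(y_i - y_{i+1})_+}{t_{i+1} - t_i}\right)^{\alpha'} - c_\alpha (a_-)^{\alpha'}, \qquad y_0 = 0,\: y_{k+1} = a.
\end{equation*}
Recognising the sum as a Riemann approximation to $c_\alpha \int_0^1 (\fdo'(s))^{\alpha'} \d s$ for the piecewise linear interpolant and taking the supremum over partitions recovers $I_{\br,a}$ via the same $\alpha'$-variation arguments used for the excursion, identifying the supremum with $c_\alpha\Vert \fdo'\Vert_{\alpha'}^{\alpha'} - c_\alpha(a_-)^{\alpha'}$ on $\Hbra$ and $+\infty$ elsewhere. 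Exponential tightness in the M1' topology carries over verbatim from the excursion proof, the positivity constraint there playing only an auxiliary role in the relevant jump estimates.

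The main obstacle is the degeneracy of the factor $p_{1-s}$ near the endpoint $s = 1$, which prevents a uniform application of the density asymptotic in that region. As in the excursion case, this is handled by carrying out the analysis on $[0, 1-\eta]$ by conditioning on $\brae_{1-\eta}$, and then letting $\eta \downarrow 0$ using exponential continuity of the trajectory at $t=1$ in the M1' topology. A further subtle point arises in the lower bound when $a > 0$: the process may reach the endpoint purely by upward motion at sub-linear cost, so the tilted recovery trajectories can satisfy the endpoint constraint without paying any additional rate, which is precisely what produces the specific form $-(a_-)^{\alpha'}$, rather than $-|a|^{\alpha'}$, in the rate function.
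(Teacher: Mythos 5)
Your approach is essentially the same as the paper's: derive the finite-dimensional LDP from the explicit bridge marginal density and the sharp asymptotics for $p_1$, identify the rate function via $\alpha'$-variation arguments, and combine with exponential tightness in M1'. The key observation $\lim_{\eps\downarrow 0}\eps^{\alpha'}\log p_1(a_\eps) = -c_\alpha(a_-)^{\alpha'}$ and its role in producing the offset $-(a_-)^{\alpha'}$ match the paper's Proposition~\ref{prop:LDP-marginals-bridge} exactly, and your explanation of why $a>0$ costs nothing extra is correct.

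Where you genuinely diverge is in the exponential tightness step. You propose to inherit the split-interval treatment from the excursion case — analysis on $[0,1-\eta]$ followed by an endpoint estimate near $t=1$. This would work, but the paper avoids it entirely: Lemma~\ref{lem:exponential-tightness-bridge} establishes the moment bound $\E{\exp(\lambda M(\brae_s,\brae_t,\brae_u))} \leq C\exp((u-s)\lambda^\alpha)$ \emph{uniformly} over $s\leq t\leq u$ in $[0,1]$, with no endpoint exclusion, by exploiting the cyclic invariance of the bridge increments. This reduces the problematic regime $\sigma$ near $1$ (where $\Vert p_{1-\sigma}\Vert_\infty$ blows up) to the harmless regime $\sigma\leq 1/2$. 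Cyclic invariance is a genuinely bridge-specific symmetry with no analogue for the excursion, so the paper's tightness proof for the bridge is actually \emph{simpler} than the one for the excursion, not a copy of it. Your approach pays with the extra two lemmas (the analogues of Lemmas~\ref{sec:expon-tightn-norm} and~\ref{sec:expon-tightn-norm-1}) and the limiting argument in $\eta$; the paper's buys a one-step proof at the price of noticing the symmetry.
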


We obtain an analogue of Theorem \ref{thm:LDP-functional} for bridges. Let
$$ K_{\mathrm{br}} = K^{(\alpha)} \cap D_{\mathrm{br}}^{(0)}[0,1] . $$
 Again, $K_{\mathrm{br}}$ is a compact subset of
 $(\D[0,1],\mathrm{dist})$, and the following logarithmic asymptotics hold for the right 
 tails of functionals of  $\br$. 
 
 \begin{theorem}[Logarithmic asymptotics for the right tails of functionals of $\br$]\label{thm:LDP-functional-bridge}
 Let $\Phi$ be  a continuous
 nonnegative functional $\Dbr^{(0)}[0,1]\to\R_+$ which is also
 positive-homogeneous in the sense that $\Phi(\lambda f) = \lambda
 \Phi(f)$ for every $f \in \Dbr^{(0)}[0,1]$ and $\lambda \geq 0$, and not
 identically $0$ on $K_{\mathrm{br}}$. Define $X = \Phi(\br)$ and let 
 $$ \gamma_\Phi = \max\left\{ \Phi(f): \: f \in K_{\mathrm{br}} \right\} . $$
 Then $\eps\Phi(\br)$ satisfies an {\fontfamily{lmss}\selectfont LDP}
 in $\R_+$ as $\eps\downarrow0$ with speed $\eps^{-\alpha'}$ and good rate function $J^\br_\Phi(x)= c_\alpha\left(\frac{x}{\gamma_\Phi}\right)^{\alpha'}$.
 In particular, 
 \begin{eqnarray*}
 - \log \Prob{ X>x } \sim c_\alpha\left(\frac{x}{\gamma_\Phi}\right)^{\alpha'} & & \quad \textrm{ as } x\rightarrow+\infty . % \label{eq:asymp-functional-bridge}
 \end{eqnarray*}
 \end{theorem}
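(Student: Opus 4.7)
The plan is to mirror the proof of Theorem \ref{thm:LDP-functional} almost verbatim, replacing the excursion {\fontfamily{lmss}\selectfont LDP} by its bridge counterpart. Theorem \ref{thm:LDP-bridge} applied with $a=0$ and $a_\eps=0$ (so that $\brae=\br$) yields an {\fontfamily{lmss}\selectfont LDP} for $(\eps\br_t)_{t\in[0,1]}$ in $(\D[0,1],\dist)$ with speed $\eps^{-\alpha'}$ and good rate function $I_{\br,0}$, which equals $c_\alpha\int_0^1|\fdo'(s)|^{\alpha'}\d s$ on $\Hbr$ and $+\infty$ elsewhere. Since $I_{\br,0}$ is infinite off the closed subset $\Dbr^{(0)}[0,1]$, the {\fontfamily{lmss}\selectfont LDP} restricts to that subset, on which $\Phi$ is by hypothesis continuous. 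The contraction principle then yields an {\fontfamily{lmss}\selectfont LDP} for $\eps\Phi(\br)$ in $\R_+$ with speed $\eps^{-\alpha'}$ and good rate function
$$J^\br_\Phi(x)=\inf\bigl\{I_{\br,0}(f):f\in\Hbr,\ \Phi(f)=x\bigr\}.$$

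The heart of the argument is to identify this infimum. The compactness of $K_{\mathrm{br}}$ in $(\D[0,1],\dist)$ is asserted right after the theorem statement and can be proved exactly as for $\Kex$, via the same adaptation of the lemma cited before Theorem \ref{thm:LDP-functional}. Compactness and continuity of $\Phi$ together guarantee that $\gamma_\Phi$ is attained at some $f^\star\in K_{\mathrm{br}}$, and the assumption that $\Phi\not\equiv 0$ on $K_{\mathrm{br}}$ gives $\gamma_\Phi>0$. For the lower bound on $J^\br_\Phi(x)$, fix $f\in\Hbr$ with $\Phi(f)=x>0$ and write $t=\|\fdo'\|_{\alpha'}$. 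If $t=0$, then $\lambda f\in K_{\mathrm{br}}$ for every $\lambda\ge 0$, and positive-homogeneity forces $\Phi(f)=0$, contradicting $x>0$; hence $t>0$, $f/t\in K_{\mathrm{br}}$, and $x=t\Phi(f/t)\le t\gamma_\Phi$, so $I_{\br,0}(f)=c_\alpha t^{\alpha'}\ge c_\alpha(x/\gamma_\Phi)^{\alpha'}$. Running the same argument on $f^\star$ forces $\|(f^\star_\downarrow)'\|_{\alpha'}=1$, whence $g=(x/\gamma_\Phi)f^\star$ lies in $\Hbr$ with $\Phi(g)=x$ and $I_{\br,0}(g)=c_\alpha(x/\gamma_\Phi)^{\alpha'}$, matching the lower bound.

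The tail estimate $-\log\bP(X>x)\sim c_\alpha(x/\gamma_\Phi)^{\alpha'}$ is then deduced by applying the {\fontfamily{lmss}\selectfont LDP} with $\eps=1/x$ to the open and closed half-lines $(1,\infty)$ and $[1,\infty)$: continuity and strict monotonicity of $J^\br_\Phi$ on $\R_+$ force the two infima to coincide at $J^\br_\Phi(1)=c_\alpha/\gamma_\Phi^{\alpha'}$, which after unscaling gives the claimed asymptotic. I do not expect any genuinely new obstacle beyond what is already packaged in Theorem \ref{thm:LDP-bridge} and the compactness of $K_{\mathrm{br}}$: the only substantive new step is the scaling/homogeneity manipulation described in the second paragraph, which is entirely parallel to the excursion case.
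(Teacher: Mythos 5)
Your proof is correct and takes essentially the same route as the paper: Theorem \ref{thm:LDP-bridge} with $a=0$, restriction to the closed subspace $\Dbr^{(0)}[0,1]$, the contraction principle, and a positive-homogeneity/scaling argument to identify the rate function — exactly mirroring the paper's treatment of Theorem \ref{thm:LDP-functional} via \cite{Fill-Janson}. You are in fact slightly more explicit than the paper in two places: you spell out the lower/upper bound identification of the infimum rather than the paper's chain of substitutions, and you note that $K_\br$ is the sublevel set $\{I_{\br,0}\le c_\alpha\}$ so compactness is an immediate corollary of goodness of $I_{\br,0}$; these are clarifications, not departures.
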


 As an application, using the same proof as for Corollary \ref{cor:exact-asymp-Profeta-sup}, we may
 reprove an exact logarithmic asymptotic for the right tails of the
 supremum of the stable L\'evy bridge obtained by %Kortchemski in
 \cite{Kortchemski}. 
 
 \begin{corollary}[{\cite[Corollary 13]{Kortchemski}}]\label{cor:Kor}
 We have
  $$ -\log \Prob{\sup_{0\le t\le 1}\br_t>x} \sim c_\alpha x^{\alpha'} . $$
 \end{corollary}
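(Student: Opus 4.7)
The plan is to apply Theorem \ref{thm:LDP-functional-bridge} to the functional $\Phi(f)=\bigl(\sup_{0\le t\le 1}f(t)\bigr)_+$, following the same strategy as for Corollary \ref{cor:exact-asymp-Profeta-sup}. The hypotheses on $\Phi$ are routine: nonnegativity and positive-homogeneity are immediate, and continuity of $\sup$ on $(\D[0,1],\dist)$ is a standard property of the Skorokhod M1 topology. Since $\br_0=0$ almost surely, $\Phi(\br)=\sup_{0\le t\le 1}\br_t$, so the tail asymptotic produced by the theorem for $\Phi(\br)$ is exactly the statement of the corollary.

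The substantive step is to identify $\gamma_\Phi=\max\{\Phi(f):f\in\Kbr\}$, and I claim $\gamma_\Phi=1$. For the upper bound, let $f\in\Kbr$ with $\sup f=M>0$, attained at some $t^\ast\in[0,1]$ (the case where the supremum is realised only as a left limit is handled identically, using that $\fdo$ is continuous). Writing $f=\fup-\fdo$, the condition $f(1)=0$ together with $\fup(1)\ge\fup(t^\ast)$ forces
\begin{equation*}
\fdo(1)-\fdo(t^\ast)\ge M+\fup(1)-\fup(t^\ast)\ge M,
\end{equation*}
so H\"older's inequality gives
\begin{equation*}
M\le\int_{t^\ast}^1\fdo'(s)\,\d s\le(1-t^\ast)^{1/\alpha}\|\fdo'\|_{\alpha'}\le 1.
\end{equation*}
For the matching lower bound, the affine function $f(t)=1-t$ lies in $\Kbr$: its Jordan decomposition is $\fup=0$, $\fdo(t)=t-1$, so $\|\fdo'\|_{\alpha'}=1$ and $f(1)=0$. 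Note that $f(0)=1$ is permitted because the definition of $\Dbr^{(0)}[0,1]$ only constrains $f(1)$; this is an M1-specific feature, reflecting that the bridge can behave, in the large deviation regime, as if it begins with a large upward jump.

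Plugging $\gamma_\Phi=1$ into Theorem \ref{thm:LDP-functional-bridge} yields $-\log\Prob{\sup_{0\le t\le 1}\br_t>x}\sim c_\alpha x^{\alpha'}$. The only step requiring genuine thought is the computation of $\gamma_\Phi$, and in particular the realisation that the extremal trajectory is allowed to start away from $0$; the rest is a direct appeal to results already in hand.
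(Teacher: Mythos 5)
Your proposal is correct and follows the same route as the paper: apply Theorem \ref{thm:LDP-functional-bridge} to the supremum functional and compute $\gamma_\Phi = 1$, reusing the $\gamma_{\sup}=1$ computation from Corollary \ref{cor:exact-asymp-Profeta-sup} (whose upper-bound argument, $f(t)\le \fdo(1)-\fdo(t)\le\|\fdo'\|_{\alpha'}\le 1$, never uses $f\ge 0$ and so applies verbatim to $\Kbr$). Your extra care with the positive part $(\sup f)_+$ and with the case $\sup f$ attained only as a left limit are sound refinements but do not change the argument.
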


 %_oOo_---Outline of the proofs---_oOo_%
 \subsection{Outline of the proofs and organization of the paper}
 
The proofs for excursions and
 bridges are very much alike, but some extra technicalities arise for
 excursions, so we focus mostly on that case, and deal with bridges in Section
 \ref{section:bridge}. 
 Section \ref{section:preliminaries} will recall the basics of stable
 processes, bridges and excursions, as well as the results on the M1
 topology that will be needed in this paper.

 In order to prove Theorem \ref{thm:LDP-excursion}, we first establish
    the {\fontfamily{lmss}\selectfont LDP} for the finite-dimensional
    marginals of $\eps \Lex$ (Proposition \ref{prop:LDP-finite-dim}). This
     relies on the explicit form of the finite-dimensional
    marginals of the stable excursion in terms of stable densities and
    related quantities, which is recalled in Section \ref{sec:excurs-bridg-spectr}. The key input is the
    following estimate for stable densities $p_t(x)=\Prob{L_t\in \d
    x}/\d x$ 
  (see \cite[Equation (14.35)]{Sato}),
  \begin{equation}
    \label{eq:19}
       \begin{cases} p_1(x) = C_\alpha x^{-\alpha-1} \big(1+
      O(x^{-\alpha})\big) & \text{ as } x\to+\infty \\ p_1(-x) =
    c_\alpha''  x^{\frac{2-\alpha}{2\alpha-2}} \exp\big(-c_\alpha
      x^{\alpha'}\big) \big(1+O(x^{-\alpha'})\big) & \text{ as }
    x\to +\infty . \end{cases}
\end{equation}
The asymmetry of these two asymptotic behaviors will play a key role. This will imply that $\eps\Lex$ satisfies
a large deviation principle for the weak topology on $\D[0,1]$ of
pointwise convergence at continuity points of the limit. This result
is proved in Section \ref{section:thm1}, which is also devoted to the
identification of the rate function. In order to
prove an {\fontfamily{lmss}\selectfont LDP} in $(\D[0,1],\mathrm{dist})$, we show that the laws of
$(\eps\Lex,\eps\in (0,1))$ are exponentially tight in this space. This
relies on a Kolmogorov-type criterion which we prove in Section
\ref{sec:an-expon-tightn}, and apply to our present context in Section
\ref{section:expo-tight}. Finally, Theorem \ref{thm:LDP-functional} is
proved in Section \ref{section:thm2} using the ideas of %Fill and Janson
\cite{Fill-Janson}, who treated the Brownian case.

%_oOo_---------------------------------_oOo_%
%%%_oOo_---Preliminaries---_oOo_%%%
%_oOo_---------------------------------_oOo_%
\section{Preliminaries}\label{section:preliminaries}

%-_oOo_---Excursions & bridges---_oOo_%
 \subsection{Excursions and bridges of stable Lévy processes
   without negative jumps}\label{sec:excurs-bridg-spectr}

 We begin by recalling the definitions of stable excursions and bridges, for
 which we mainly refer to \cite{Chaumont}. We denote by $\bP_x$ the
 law under which the canonical càdlàg process $(L_t,t\geq
 0)$ is a stable L\'evy process without negative jumps with exponent $\alpha$,
 started from $x$,  and we set $\bP=\bP_0$. We
 let $(\Fil_t)_{t\geq0}$ be the natural filtration. We
 denote by $(p_t)_{t\geq0}$ the continuous transition semigroup
 density of $L$ under $\bP_x$, which possesses the scaling property 
 $$ p_t(x) = t^{-1/\alpha} p_1(t^{-1/\alpha}x) . $$
 Let $\mathcal{E}$ be the excursion space, which is defined by
 $$ \mathcal{E} = \big\{ \omega \in \D(\R_+,\R_+): \, \omega(0)=0 \textrm{ and } \zeta(\omega) = \sup\{ t>0, \: \omega(t)>0 \} \in (0,\infty) \big\} . $$
 Denote by $\underbar{n}$ the It\^o measure of $L$ above its past
 infimum.
 The $\sigma$-finite ``law''  of the lifetime under $\underbar{n}$ has
 been calculated by %Monrad and Silverstein (see \cite[Lemma
 %3.2]{Monrad-Silverstein}) 
 \cite[Lemma 3.1]{Monrad-Silverstein}:
 $$ \underbar{n}( t<\zeta ) = \Gamma\left(1-\frac{1}{\alpha}\right)^{-1} t^{-1/\alpha} . $$
 For $\lambda>0$, we define the scaling operator $S^{(\lambda)}$ on $\mathcal{E}$ by
 $$ S^{(\lambda)}(\omega) = ( \lambda^{1/\alpha} \omega_{t/\lambda}, \, t\geq0 ) . $$
 Then there exists a unique collection of probability measures $(\underbar{n}^{(t)}, \, t>0)$ on $\mathcal{E}$ such that
 \begin{enumerate}[label=(\roman*)]
 \item for every $t>0$, $\underbar{n}^{(t)}(\zeta=t)=1$;
 \item for every $\lambda>0$ and $t>0$, we have $S^{(\lambda)}(\underbar{n}^{(t)}) = \underbar{n}^{(\lambda t)}$;
 \item for every measurable subset $A$ of $\mathcal{E}$,
 $$ \underbar{n}(A) = \int_0^\infty \frac{\d s}{\alpha \Gamma\left(1-\frac{1}{\alpha}\right) s^{1+\frac{1}{\alpha}}} \underbar{n}^{(s)}(A) . $$
 \end{enumerate}
 The probability distribution $\underbar{n}^{(1)}$ on c\`adl\`ag paths
 with unit lifetime is called the law of the normalized excursion of
 $L$.
 
 We denote by $\bP_x^{(0,\infty)}$ the law of the process $(L,\bP_x)$,
 $x>0$, killed when it leaves $[0,\infty)$, so that 
 $$ \bP_x^{(0,\infty)}\left( A, \, t<\zeta \right) =
 \Prob{ A, \, t<\tau_{(-\infty,0)} }, \quad t\geq0, \quad
 A \in \Fil_t . $$
 We denote by $(p_t^{(0,\infty)}(x,\cdot))_{t\geq0}$ the transition
 semigroup under $\bP_x^{(0,\infty)}$. 
 The measure $\underbar{n}$ is Markovian with semigroup
 $(p_t^{(0,\infty)}(x,\cdot))_{t\geq0}$ under $\bP_x$, which means
 that if $F$ is measurable and nonnegative, and $\theta_tf=f(t+\cdot)$ is the
 shift operator, then
 \begin{equation}
   \label{eq:7}
     \underbar{n}\big( \mathbbm{1}_A\, F \circ \theta_t \,
     \1{t<\zeta} \big) = \underbar{n}\big( \mathbbm{1}_A \,
     \bE_{L_t}^{(0,\infty)}\left[F\right] \, \1{t<\zeta} \big), \quad
     t\geq0, \quad A \in \Fil_t .
   \end{equation}
   We denote by $(q_x(t))_{t\geq0}$ the density of the first hitting
   time of $0$ under $\bP_x^{(0,\infty)}$. Thanks to the absence of
   negative jumps, the density $(q_x(t))_{t\geq0}$ can be related to
   the law of $L$ as follows (see, for instance,
   \cite[Corollary VII.3]{Bertoin}):
   $$ q_x(t) = \frac{x}{t} p_{t}(-x) . $$
 Hence, it satisfies the following scaling property
 $$ q_x(t) = x^{-\alpha} q_1(x^{-\alpha} t) . $$
 Let $(j_t)_{t\geq0}$ be the density of the entrance law under the
 measure 
 $\underbar{n}$, defined by the fact that, for every $t>0$,
 $$ \underbar{n}( f(L_t) \1{t<\zeta} ) = \int_0^\infty f(x) j_t(x) \d x , $$
 where $f$ is an arbitrary bounded Borel function.
 Recall that for all $t>0$, $j_t$ is an integrable function in
 $\Lp^\infty(\R_+)$, and we may choose it so that it satisfies the following scaling property 
 \begin{equation}\label{eq:3}
 j_t(x) = t^{-2/\alpha} j_1(t^{-1/\alpha}x) 
 \end{equation}
(see \cite[Lemma 3.2]{Monrad-Silverstein}). 
 Combining the above expressions, we may show that the law of the
 normalized excursion has a density with respect to the Lebesgue
 measure. Indeed, if $f:\R \rightarrow\R_+^*$ and
 $g:\R_+\rightarrow\R_+$ are two nonnegative measurable functions, we then have
 \begin{eqnarray*}
 \underbar{n}\left( f(L_t) g(\zeta) \1{t<\zeta} \right)
 & = & \int_t^\infty \d s g(s) \int_\R f(x) j_t(x) q_x(s-t) \d x \\
 & = & \int_t^\infty \frac{g(s)}{\alpha \Gamma\left(1-\frac{1}{\alpha}\right) s^{1+\frac{1}{\alpha}}} \underbar{n}^{(s)}( f(L_t) ) \d s .
 \end{eqnarray*}
 This implies that for all $s>0$,
 $$ \underbar{n}^{(s)}( f(L_t) ) = \alpha \Gamma\left(1-\frac{1}{\alpha}\right) \int_{\R_+} f(x) j_t(x) q_x(s-t) \d x . $$
 In particular, when $s=1$ the law of the normalized excursion is then
 $$ \underbar{n}^{(1)}( f(L_t) ) = \alpha \Gamma\left(1-\frac{1}{\alpha}\right) \int_{\R_+} f(x) j_t(x) q_x(1-t) \d x . $$
 Using a similar argument and the Markov property, we can compute
 \begin{equation}\label{eq:marginal-laws}
 \underbar{n}^{(1)}( f(L_{t_1},\dots,L_{t_n}) )= \alpha \Gamma\left(1-\frac{1}{\alpha}\right) \int_{\R_+^n} j_{t_1}(x_1) \prod_{i=1}^{n-1} p^{(0,\infty)}_{t_{i+1}-t_i}(x_i,x_{i+1}) q_{x_n}(1-t_n) \d x_1 \,\cdots \d x_n .
 \end{equation}
This gives the finite-dimensional marginals for the excursion
process $\Lex$, so that the left hand-side of the preceding equation
may also be written as $\E{f(\Lex_{t_1},\ldots,\Lex_{t_n})}$. In particular, for every $t\in (0,1)$, the law
of $(\Lex_{t+s},0\leq s\leq 1-t)$ can be obtained from the following
formula, valid for every non-negative measurable $F$: 
\begin{equation}
  \label{eq:15}
  \E{F(\Lex_{t+s},0\leq s\leq 1-t)}=\alpha\Gamma\left(1-\frac{1}{\alpha}\right)\int_{\R_+}j_t(x)q_x(1-t)\bE^{1-t}_x[F(L)] \d x
  ,
\end{equation}
where $\bE^\delta_x$ is the law of the first-passage bridge of duration
$\delta>0$ started from $x>0$. The latter is defined by absolute continuity
for every $\delta'>0$ and $\Fil_{\delta'}$-measurable $F\geq 0$ by the
formula
\begin{equation}
  \label{eq:16}
\bE^\delta_x[F]=\bE_x\left[F
  \1{T_0>\delta'}\frac{q_{L_{\delta'}}(\delta-\delta')}{q_x(\delta)}\right] .
\end{equation}

In a similar but simpler way, the law of the bridge $\bra$ has
finite-dimensional marginals given by
\begin{equation}
  \label{eq:8}
  \E{f(\bra_{t_1},\ldots,\bra_{t_n})}=\frac{1}{p_1(a)}\int_{\R^n} f(x_1,\ldots,x_n)\prod_{i=0}^{n}
p_{t_{i+1}-t_i}(x_{i+1}-x_i) \d x_1 \,\cdots \d x_n ,
\end{equation}
where $0=t_0<t_1<\ldots<t_n<1=t_{n+1}$, and by convention we let
$x_0=0$ and $x_{n+1}=a$ in the above integral.

%-_oOo_---M1 Skorokhod topology---_oOo_-%
 \subsection{Basic results on the Skorokhod M1' topology}\label{sec:basic-results-skor}
 
 In this section, we introduce the distance $\dist$ on
 $\D[0,1]$ inducing the Skorokhod M1' topology considered in \cite{BBRZ20,V21}, and study some of its key properties.
  We also refer to \cite[Chapters 12 and 13]{Whitt} for a complete
  treatment of the Skorokhod M1 topology.

 By convention, for $f\in \D[0,1]$, we let $f(0-)=0$,  which is a way
 to ``root'' the function $f$ at $0$: by contrast, we adopt the convention
 $f(1+)=f(1)$. 

For two real numbers $x,y\in \R$, the real interval $[x\wedge y,x\vee y]$ will more simply be denoted by $[x,y]$, even when $x>y$.

%-_oOo_---Space (D[0,1],dist)---_oOo_-%
 \subsubsection{The space $(\D[0,1],\dist)$}

 For $f\in \D[0,1]$, we define the {\em augmented graph} of $f$ {\em
   rooted at $0$}, to be the set
 $$ \Gamma_0(f) = \big\{ (t,x) : t\in [0,1], x\in [f(t-),f(t)] \big\}\subset [0,1]\times \R .$$
 Note in particular that $\Gamma_0(f)$ contains the segments $\{0\}\times [0,f(0)]$ and $\{1\}\times [f(1-),f(1)]$.
 For $(t,x),(u,y)\in \Gamma_0(f)$, we write $(t,x)\preceq(u,y)$ if $t<u$
 or if $t=u$ and $|x-f(t-)|\leq |y-f(t-)|$. This defines a total order
 on $\Gamma_0(f)$. We say that a function $r\in [0,1]\mapsto
 (t(r),x(r))$ is a {\em parametric representation} of $\Gamma_0(f)$ if
 it is an increasing bijection from $([0,1],\leq)$ to
 $(\Gamma_0(f),\preceq)$, and we write $\Pi(f)$ for the set of all
 parametric representations of $\Gamma_0(f)$.
 For $f_1,f_2\in \D[0,1]$, we let 
 $$ \dist(f_1,f_2) = \inf \left\{ \sup_{r\in[0,1]} |t_1(r)-t_2(r)|\vee|x_1(r)-x_2(r)|:(t_1,x_1)\in \Pi(f_1),(t_2,x_2)\in \Pi(f_2) \right\} .$$
 This indeed defines a distance function that makes $(\D[0,1],\dist)$
 a Polish space\footnote{Note, however, that the distance $\dist$ is
   not complete; see \cite[Section 12.8]{Whitt}.}. These results are
 obtained exactly as in the context of the M1 topology (see \cite[Theorems 12.3.1 and 12.8.1]{Whitt}), with the slight
 modification that the convention taken in this reference is that $f(0-)=f(0)$ rather than $f(0-)=0$, and that $f$
 is supposed to be continuous at $0$ and $1$. 
 The choice of convention that $f(0-)=0$ allows a sequence
 of functions that jump ``right after time 0'' to be possibly
 convergent in the
M1' topology. 
 For example, one has 
 $\dist(\mathbbm{1}_{[1/n,1]},\mathbbm{1}_{[0,1]})\to 0$ as
 $n\to\infty$, while the sequence $\mathbbm{1}_{[1/n,1]}$ is not
 convergent in the classical M1 topology, and in fact 
the 
 M1' topology is
 strictly weaker than the M1 topology. 
 On the other hand, observe that $\mathbbm{1}_{[0,1/n]}$ is not convergent in $(\D[0,1],\dist)$.

%%%%
\subsubsection{The $M$-oscillation}

In this section, we introduce an oscillation function that will serve
as a substitute in $(\D[0,1],\dist)$ for the classical modulus of continuity. 
 For $x\in \R$ and $A\subset \R$ we let $d(x,A)=\inf\{|x-y|:y\in A\}$
 be the distance from $x$ to $A$. Note the elementary inequalities
 \begin{equation}\label{eq:4}
 d(x,A)-d(y,A)\leq d(x,y)\, ,\qquad d(x,A)-d(x,B)\leq d_H(A,B)
 \end{equation}
 where $x\in \R$ and $A,B\subset \R$, and $d_H(A,B)=\sup_{x\in A}d(x,B)\vee \sup_{y\in B}d(y,A)$ is the Hausdorff distance between $A$ and $B$. 

 For $x,y,z\in \R$, we set
 $$ M(x,y,z)=d(y,[x,z])=(y-x)_+\wedge (y-z)_+ + (y-x)_-\wedge (y-z)_-  . $$
 The $M$-oscillation of a function $f\in \D[0,1]$ is the function defined for $\delta>0$ by 
 $$ w_M(f,\delta)=\sup\big\{M(f(t_1-),f(t),f(t_2)):0\leq t_1< t< t_2\leq 1, |t_2-t_1|< \delta\big\} . $$
 The choice of the left-limit at $t_1$ in the first term might appear unnatural at first sight, because of the fact that $f$ has left limits. In fact, it is only needed when $t_1=0$, because of our rooting convention $f(0-)=0$. So in fact, $w_M(f,\delta)$ is the maximum of the two quantities
 $$\sup\big\{M(f(t_1),f(t),f(t_2)):0\leq t_1< t<t_2\leq 1, |t_2-t_1|< \delta\big\}$$
 and
 $$\sup\big\{M(0,f(t),f(t_2)):0< t< t_2< \delta\big\} ,$$
 and if $f(0)=0$, then $w_M(f,\delta)$ is equal to the first quantity. 
 
 %-Theorem: Basic results 1
 \begin{theorem}\label{sec:basic-results-skor-1}
 Let $\mathcal{D}$ be a fixed countable subset of $[0,1]$ containing $1$. 
 Let $K\subset \D[0,1]$ be such that
 $$ \sup\big\{|f(q)|:f\in K,q\in \mathcal{D}\big\}<\infty $$
 and 
 $$ \lim_{\delta\downarrow 0}\sup\big\{w_M(f,\delta):f\in K\big\} =0 . $$
 Then $K$ is a relatively compact subset of $(\D[0,1],\dist)$. 
 \end{theorem}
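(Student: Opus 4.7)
The plan is to adapt to the M1' setting the classical proof of the analogous compactness criterion for the Skorokhod M1 topology (cf.\ \cite[Theorem 12.12.2]{Whitt}), with small modifications accounting for the convention $f(0-)=0$.

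First I would establish that the two hypotheses imply uniform boundedness $B:=\sup\{|f(t)|:f\in K,\,t\in[0,1]\}<\infty$. Fix $\delta_0>0$ with $\eta_0:=\sup_{f\in K}w_M(f,\delta_0)$ finite, and a finite collection $0=q_0<q_1<\cdots<q_N=1$ with $q_{i+1}-q_i<\delta_0/2$ and $q_1,\ldots,q_N\in\mathcal{D}$. Using the anchor $f(0-)=0$, the bound $|f(q_i)|\leq C:=\sup\{|f(q)|:f\in K,\,q\in\mathcal{D}\}$, and the inequality $d(f(t),[f(q_{i-1}-),f(q_{i+1})])\leq\eta_0$ for $t\in[q_{i-1},q_{i+1}]$, one inductively bounds $|f(q_i-)|$, and hence $|f(t)|$ for every $t\in[0,1]$, by a quantity of the form $C+N\eta_0$. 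Consequently each augmented graph $\Gamma_0(f)$ sits inside a fixed compact rectangle $[0,1]\times[-B,B]$.

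Next, for each $f\in K$ I would construct a parametric representation $r_f=(t_f,x_f)\in\Pi(f)$ whose modulus of continuity is controlled uniformly in $f\in K$. A natural choice is to traverse $(\Gamma_0(f),\preceq)$ at constant speed with respect to the normalized $\ell^1$ arc-length on $\Gamma_0(f)$. Using the fact that between any two points $(t_1,y_1),(t_2,y_2)\in\Gamma_0(f)$ with $|t_2-t_1|<\delta$ every intermediate point $(t,x)\in\Gamma_0(f)$ satisfies $x\in[y_1\wedge y_2-w_M(f,\delta),\,y_1\vee y_2+w_M(f,\delta)]$, one shows that if $|r-r'|<\eta$ then both $|t_f(r)-t_f(r')|$ and $|x_f(r)-x_f(r')|$ are bounded by a modulus depending only on $\eta$, $B$ and $\sup_{f\in K}w_M(f,\delta)$ for an appropriately chosen $\delta$, tending to $0$ as $\eta\to 0$. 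Applying Arzelà--Ascoli in $C([0,1],[0,1]\times\R)$, any sequence $(f_n)\subset K$ then admits a subsequence along which $r_{f_n}$ converges uniformly to a continuous limit $r_\infty=(t_\infty,x_\infty)$ with $t_\infty$ nondecreasing and $t_\infty(0)=0$, $t_\infty(1)=1$.

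The final step is to exhibit $g\in\D[0,1]$ such that $r_\infty\in\Pi(g)$, which amounts to defining $g(s):=x_\infty(\sup\{r\in[0,1]:t_\infty(r)\leq s\})$ for $s\in[0,1]$, verifying that $g$ is right-continuous on $[0,1)$ and has left-limits on $(0,1]$, and checking that $r_\infty$ parametrizes $\Gamma_0(g)$. This yields $\dist(f_n,g)\to 0$ along the subsequence, establishing relative compactness. The main obstacle is producing the equicontinuous parametrizations in the second step: the $M$-oscillation controls only the ``transverse'' deviation of $f(t)$ from the chord between nearby values, so the parametrization must be carefully designed to absorb this oscillation into its own arc-length, allowing the parameter to travel at a controlled rate along vertical segments of $\Gamma_0(f)$ corresponding to jumps while preserving joint continuity in both $t$ and $x$.
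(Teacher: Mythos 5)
The central obstacle is that the arc-length parametrization you propose does not exist in general. The compactness criterion must apply to arbitrary c\`adl\`ag $f\in K$, which may have unbounded total variation; note that the $M$-oscillation condition places no bound on variation (small, fast oscillations have negligible $w_M$ but arbitrarily large variation). For such $f$, the augmented graph $\Gamma_0(f)$ has infinite $\ell^1$ length, so ``constant speed with respect to the normalized $\ell^1$ arc-length'' is not a well-defined parametric representation, and there is no uniformly equicontinuous family $(r_f)_{f\in K}$ to feed into Arzel\`a--Ascoli. Your closing paragraph rightly flags the construction of the parametrizations as the main difficulty, but the suggested remedy --- absorbing the oscillation ``into its own arc-length'' --- still presupposes rectifiability of $\Gamma_0(f)$, so the key step is left unresolved. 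One either needs a genuinely different construction of equicontinuous parametric representations that does not rely on arc-length, or one should bypass parametrizations entirely and argue via oscillation-controlled pointwise limits on a dense set, which is essentially what Whitt does; the paper's own treatment of Theorem~\ref{sec:basic-results-skor-1} simply defers to Chapter~12 of Whitt's book, noting only that the convention $f(0-)=0$ requires a minor adaptation, so your attempt is a genuine extension of what the paper presents.

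A secondary point: your first step selects a $\delta_0/2$-net $q_1<\dots<q_N$ inside $\mathcal{D}$, which requires $\mathcal{D}$ to be dense in $[0,1]$. As printed, Theorem~\ref{sec:basic-results-skor-1} only asks that $\mathcal{D}$ be a countable set containing $1$, and with that reading the statement is actually false: take $\mathcal{D}=\{1\}$ and $f_n=n\,\mathbbm{1}_{[0,1)}$. Then $f_n(1)=0$, and since $f_n$ is non-increasing on $(0,1]$ and $f_n(0-)=0$ is a global minimum one checks $w_M(f_n,\delta)=0$ for every $\delta<1$; yet $(1/2,n)\in\Gamma_0(f_n)$ escapes to infinity, so $\{f_n\}$ is not relatively compact. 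The hypothesis should read ``countable dense subset,'' as it does in Theorem~\ref{sec:basic-results-skor-2}. Granting density, your step-1 argument --- anchoring at $f(0-)=0$ and at the net points of $\mathcal{D}$ and chaining the $M$-oscillation bound --- is sound and gives the uniform bound $C+N\eta_0$ you claim.
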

 
 This theorem can be found in Chapter 12 of %Whitt 
 \cite{Whitt}, with
 the minor difference that our space of functions starts with an ``initial jump''  (recall that $f(0-)=0$ by our convention).

%%%%
\subsubsection{An exponential tightness criterion}\label{sec:an-expon-tightn}

Our second result gives a sufficient condition to check exponential tightness in $(\D[0,1],\dist)$ for a family of random processes.

 %-Theorem: Basic results 2
 \begin{theorem}[Exponential tightness in $(\mathbb{D}{[}0,1{]},\dist)$]\label{sec:basic-results-skor-2}
 Let $\alpha>1$ and $\{ X_{(\eps)},\eps >0 \}$ be a family of
 $\D[0,1]$-valued stochastic processes. We assume that
 $X_{(\eps)}(0)=0$ for every $\eps>0$.
 \begin{enumerate}[label=\arabic*.]
 \item\label{item tight 1} Suppose that 
 there exist two constants $c,C\in (0,\infty)$ such that for every $\lambda>0$, and every $0\leq t_1\leq t\leq t_2\leq 1$, 
 $$ \E{\exp(\lambda M(X_{(\eps)}(t_1),X_{(\eps)}(t),X_{(\eps)}(t_2)))}
 \leq c\exp\big(C(\lambda\,  \eps)^\alpha|t_2-t_1|\big) , $$
 then it holds that for every $\beta\in (0,1/\alpha)$, 
 $$\lim_{N\to\infty}\limsup_{\eps\downarrow 0}\eps^{\alpha'}\log
 \Prob{\bigcup_{n> N} \left\{w_M(X_{(\eps)},2^{-n})>
   2\frac{2^{-n\beta}}{1-2^{-\beta}}\right\}}=-\infty .$$
 \item\label{item tight 2} 
Suppose further that for some countable dense set $\mathcal{D}$ of
$[0,1]$ containing $1$, the family of random variables $\{
X_{(\eps)}(q),\eps>0 \}$ is exponentially tight with speed
$\eps^{-\alpha'}$, for every $q\in \mathcal{D}$. 
 Then the laws of $X_{(\eps)}$ as $\eps\downarrow 0$ are exponentially
 tight in $(\D[0,1],\dist)$, with speed $\eps^{-\alpha'}$.
\end{enumerate}
\end{theorem}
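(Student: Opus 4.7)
The plan is to establish Part 1 by a Kolmogorov--Chentsov-type chaining argument tailored to the $M$-oscillation, and then to deduce Part 2 by combining Part 1 with the relative compactness criterion of Theorem \ref{sec:basic-results-skor-1}. The first step is to convert the exponential-moment hypothesis into a concrete tail estimate: by Markov's inequality, for every $u,\lambda>0$,
\[ \Prob{M(X_{(\eps)}(t_1),X_{(\eps)}(t),X_{(\eps)}(t_2)) > u} \le c\exp\bigl(-\lambda u + C(\lambda\eps)^{\alpha}|t_2-t_1|\bigr), \]
and the standard optimization $\lambda^{\alpha-1}=u/(C\alpha\eps^{\alpha}|t_2-t_1|)$ yields, after using $\alpha'/\alpha = 1/(\alpha-1)$, a tail of the form
\[ \Prob{M(X_{(\eps)}(t_1),X_{(\eps)}(t),X_{(\eps)}(t_2)) > u} \le c\exp\!\left(-c_0\, (u/\eps)^{\alpha'}\,|t_2-t_1|^{-1/(\alpha-1)}\right) \]
for some constant $c_0>0$.

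Next comes the chaining. For $m\ge 1$ I would define
\[ \Omega_m(f) = \max_{0\le k\le 2^m-2} M\bigl(f(k2^{-m}),\,f((k+1)2^{-m}),\,f((k+2)2^{-m})\bigr), \]
augmented by a boundary term $M(0,f(2^{-m}),f(2\cdot 2^{-m}))$ to handle the rooting convention $f(0-)=0$. The combinatorial heart of the proof is to establish the M1' analogue of Chentsov's chaining inequality
\[ w_M(f, 2^{-n}) \le 2\sum_{m\ge n}\Omega_m(f), \]
which I would prove by bracketing any triple $0\le t_1<t<t_2\le 1$ with $t_2-t_1<2^{-n}$ between dyadic endpoints at successively finer levels and telescoping over $m$; the inequalities \eqref{eq:4} (in particular the Hausdorff-distance bound $d(x,A)-d(x,B)\le d_H(A,B)$) let one bound the change introduced by each refinement by $\Omega_{m+1}(f)$. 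Once the chaining inequality is in hand, a union bound over the at most $2^m$ triples at scale $m$ combined with the tail estimate above (applied at $u=2^{-m\beta}$ and $|t_2-t_1|\le 2\cdot 2^{-m}$) yields
\[ \Prob{\Omega_m(X_{(\eps)}) > 2^{-m\beta}} \le c\cdot 2^m\exp\!\left(-c_1\,\eps^{-\alpha'}\,2^{m\alpha'(1/\alpha-\beta)}\right). \]
Since $\beta<1/\alpha$ ensures $\alpha'(1/\alpha-\beta)>0$, the exponential dominates the combinatorial factor $2^m$, and summing in $m>N$ gives Part 1: on the complement of $\bigcup_{m>N}\{\Omega_m(X_{(\eps)})>2^{-m\beta}\}$, the chaining bound yields $w_M(X_{(\eps)},2^{-n})\le 2\cdot 2^{-n\beta}/(1-2^{-\beta})$ for every $n>N$.

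For Part 2, I would fix $M>0$, enumerate $\mathcal{D}=\{q_j\}_{j\ge 1}$, and use the assumed marginal exponential tightness to select compact sets $B_{j,M}\subset\R$ with $\Prob{X_{(\eps)}(q_j)\notin B_{j,M}}\le\exp(-(M+j)\eps^{-\alpha'})$ for $\eps$ small, together with Part 1 to select $N_M$ with $\Prob{\exists n>N_M:\ w_M(X_{(\eps)},2^{-n})>2\cdot 2^{-n\beta}/(1-2^{-\beta})}\le\exp(-M\eps^{-\alpha'})$. The closure
\[ K_M=\overline{\bigl\{f\in\D[0,1]:\ f(q_j)\in B_{j,M}\ \forall j,\ w_M(f,2^{-n})\le 2\cdot 2^{-n\beta}/(1-2^{-\beta})\ \forall n>N_M\bigr\}} \]
is compact in $(\D[0,1],\dist)$ by Theorem \ref{sec:basic-results-skor-1}, while a geometric union bound controls $\Prob{X_{(\eps)}\notin K_M}$ by a constant multiple of $\exp(-M\eps^{-\alpha'})$, which is exponential tightness at speed $\eps^{-\alpha'}$.

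The main obstacle will be the M1' chaining inequality above. Since $M(x,y,z)=d(y,[x,z])$ is not a metric and has no triangle inequality, one cannot simply chain $|f(s)-f(t)|$ as in the J1 or uniform case; the correct substitute is the Hausdorff-distance estimate \eqref{eq:4}, and one must verify carefully that after bracketing $t_1,t_2$ by dyadic points at level $m+1$, the change in the target interval is controlled by the level-$(m+1)$ oscillations $\Omega_{m+1}(f)$, with a small extra argument at $t_1=0$ to absorb the rooting $f(0-)=0$ convention.
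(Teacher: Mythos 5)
Your plan matches the paper's proof closely (both follow a Billingsley-style dyadic chaining), but the crucial step is left as a plan: you announce the chaining inequality
\[ w_M(f,2^{-n}) \le 2\sum_{m\ge n}\Omega_m(f) \]
and correctly identify the Hausdorff-distance bound in \eqref{eq:4} as the substitute for a triangle inequality, but you do not actually prove it, and that is exactly where the technical content of Part 1 lives. The paper establishes the descent inequality $B_m \le B_{m-1}+2A_m$, where $B_m$ is the maximal $M$-oscillation over triples in $D_m$ confined to a window $[k2^{-n},(k+2)2^{-n}]$, and the proof requires a careful case analysis: starting from a triple $t_1<t<t_2$ in $D_m$ achieving the maximum, one must choose replacement points $t_1',t',t_2'\in D_{m-1}$ with the right relative orderings of $f$-values so that the change of the interval $[f(t_1),f(t_2)]$ is controlled by $A_m$ in Hausdorff distance. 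This uses the \emph{maximality} of the chosen triple in a non-obvious way (e.g.\ to rule out $f(t-2^{-m})>f(t)$), and each of $t_1,t,t_2$ has to be treated differently depending on whether it already lies in $D_{m-1}$ and on the local monotonicity of $f$. Saying ``bracket $t_1,t_2$ by dyadic points and telescope'' is the right headline but skips the argument that makes it work; without it, the inequality is unproved.

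There is also a smaller gap in your Part 2. You take compact sets $B_{j,M}$ for each $q_j\in\mathcal{D}$ and form $K_M$ by intersecting over \emph{all} $j$, then invoke Theorem \ref{sec:basic-results-skor-1}. But that theorem requires $\sup\{|f(q)|:f\in K_M,\ q\in\mathcal{D}\}<\infty$, and nothing forces the $B_{j,M}$ to be \emph{uniformly} bounded in $j$, so the hypothesis is not verified as written. The fix (which the paper uses) is to bound $\sup_{t\in[0,1]}|X_{(\eps)}(t)|$ using only a \emph{finite} mesh $0=t_0<\dots<t_k=1$ in $\mathcal{D}$ of width $\le\delta$ together with $w_M(X_{(\eps)},\delta)$, via
\[ \sup_{t\in[0,1]}|X_{(\eps)}(t)|\le\max_{1\le i\le k}|X_{(\eps)}(t_i)|+w_M(X_{(\eps)},\delta), \]
which gives a single uniform bound $A_N$ and sidesteps the issue. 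Your construction can be repaired along the same lines (the $w_M$ control plus finitely many $B_{j,M}$ already bound all values), but this needs to be said.
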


\begin{proof}
 For \ref{item tight 1}, 
 we follow and adapt the approach of %Billingsley 
 \cite{Bilingsley}. 
 Let $D_n=\{k2^{-n},0\leq k\leq 2^n\}$ be the dyadic numbers of level
 $n$. Then for $\beta>0$ and $\lambda>0$,  by Markov's inequality, 
 $$ \Prob{M(X_{(\eps)}(k2^{-n}),X_{(\eps)}((k+1)2^{-n}),X_{(\eps)}((k+2)2^{-n})>2^{-n\beta}}
 \leq c\exp(-\lambda 2^{-n\beta}+C(\lambda \eps)^\alpha2^{-n+1}) $$
 which, by optimizing over $\lambda>0$ and taking a union bound, yields
 \begin{align*}
 \Prob{\exists k\in\{0,1,\ldots,2^n-2\}:M(X_{(\eps)}(k2^{-n}),X_{(\eps)}((k+1)2^{-n}),X_{(\eps)}((k+2)2^{-n}))>2^{-n\beta}}\\
 \leq
   2^nc\exp(-C'(\alpha)\eps^{-\alpha'}2^{n(1-\alpha\beta)/(\alpha-1)})\, .
 \end{align*}
 Setting $A_n=\max\{M(f(k2^{-n},(k+1)2^{-n},(k+2)2^{-n})),0\leq k\leq
 2^n-2\}$, this shows that 
 $$
 \Prob{A_n>2^{-n\beta}}\leq
 2^nc\exp(-C'(\alpha)\eps^{-\alpha'}2^{n(1-\alpha\beta)/(\alpha-1)}) . $$ 
 Next, let $f\in \D[0,1]$ and, for $I\subset [0,1]$, let
 $$\LL(I)=\sup\big\{M(f(t_1),f(t),f(t_2)):t_1,t,t_2\in I, t_1\leq t\leq t_2 \big\} .$$ 
 Fix $n\geq 1$ and $k\in \{0,1,\ldots,2^n-2\}$. We aim to provide
 bounds on $\LL([k2^{-n},(k+2)2^{-n}])$. To this end, by
 right-continuity, it suffices to bound uniformly the quantities
 $M(f(t_1),f(t),f(t_2))$ for $t_1\leq t\leq t_2$ in
 $[k2^{-n},(k+2)2^{-n}]\cap \bigcup_{m\geq 0}D_m$. For $m\geq n$, let 
 $$ B_m=\max\big\{M(f(t_1),f(t),f(t_2)):k2^{-n}\leq t_1\leq t\leq t_2\leq (k+2)2^{-n}, t_1,t,t_2\in D_m\big\} , $$
 so that $\LL([k2^{-n},(k+2)2^{-n}])$ is the increasing limit of $B_m$ as $m\to\infty$. 
 The key observation is that, for every $m\geq  n$, 
 \begin{equation}\label{eq:2}
 B_m\leq B_{m-1}+2A_m\, .
 \end{equation}
 To check  this, let us assume that $t_1<t<t_2$ are in $D_m$ and
 achieve the maximum defining $B_m$. If $f(t)$ lies between $f(t_1)$ and
 $f(t_2)$ then this means that $B_m=0$ and there is nothing to prove.
 Otherwise, we may assume without loss of generality that $f(t_2)\leq
 f(t_1)<f(t)$, the other cases being symmetric, so that
 $B_m=f(t)-f(t_1)$. Note that if $t\in D_m\setminus D_{m-1}$, then
 $t-2^{-m},t+2^{-m}$ belong to $D_{m-1}$.
Moreover, it must hold that $f(t-2^{-m})\vee f(t+2^{-m})\leq f(t)$, as
otherwise, for instance if $f(t-2^{-m})>f(t)$, then we would have
$$M(f(t_1),f(t-2^{-m}),f(t_2))=f(t-2^{-m})-f(t_1)>f(t)-f(t_1)=M(f(t_1),f(t),f(t_2)) ,$$
and this would contradict the assumption that $M(f(t_1),f(t),f(t_2))$ is
maximal over points $t_1<t<t_2$ in $D_m$. This implies that
$M(f(t-2^{-m}),f(t),f(t+2^{-m}))=(f(t)-f(t-2^{-m}))\wedge
(f(t)-f(t+2^{-m}))$, and therefore, we may choose $t'\in
\{t-2^{-m},t+2^{-m}\}$ such that $|f(t)-f(t')|\leq A_m$. If $t\in
D_{m-1}$, we let $t'=t$.

We define $t'_1$ in a similar way, setting it to be $t_1$ if the latter
belongs to $D_{m-1}$. If $t_1\in D_m\setminus D_{m-1}$, on the other hand, then $t_1\pm
2^{-m}$ belong to $D_{m-1}$. We note that $f(t_1- 2^{-m})\wedge f(t_1+2^{-m})\geq
f(t_1)$, as otherwise this would again contradict the maximality of
$M(f(t_1),f(t),f(t_2))$ over points in $D_m$. So we may choose
$t'_1\in \{t\pm2^{-m}\}$ in such a way that $|f(t'_1)-f(t_1)|\leq
A_m$.

Finally, we define $t'_2$ in the following way. If $t_2\in D_{m-1}$, we
let $t'_2=t_2$ as usual. If $t_2\in D_{m}\setminus D_{m-1}$, we have
two situations. If $f(t_2-2^{-m})\wedge f(t_2+2^{-m})\geq f(t_2)$ then
we may again choose $t'_2\in \{t_2\pm 2^{-m}\}$ in such a way that
$|f(t'_2)-f(t_2)|\leq
A_m$. In this case, we notice that
$d_H([f(t_1),f(t_2)],[f(t'_1),f(t'_2)])\leq A_m$, so that 
\begin{equation}
  \label{eq:14}
  d(f(t),[f(t'_1),f(t'_2)])\geq
  d(f(t),[f(t_1),f(t_2)])-A_m
\end{equation}
by \eqref{eq:4}. 
Otherwise, we choose $t'_2\in \{t_2-2^{-m},t_2+2^{-m}\}$ in such a way
that $f(t'_2)\leq f(t_2)$. In this case, we have
$d_H([f(t_1),f(t'_2)],[f(t'_1),f(t'_2)])\leq A_m$ so that, again by
\eqref{eq:4}, 
$$d(f(t),[f(t'_1),f(t'_2)])\geq
d(f(t),[f(t_1),f(t'_2)])-A_m=d(f(t),[f(t_1),f(t_2)])-A_m ,$$
so that \eqref{eq:14} holds in every case.  
Therefore, for this choice of $t'_1,t',t'_2$ and by \eqref{eq:4}, we obtain
 \begin{align*}
 B_{m-1}\geq M(f(t_1'),f(t'),f(t_2'))&=d(f(t'),[f(t'_1),f(t'_2)])\\
 & \geq d(f(t),[f(t'_1),f(t'_2)]) - A_m\\
 & \geq d(f(t),[f(t_1),f(t_2)])-2A_m=B_m-2A_m\, ,
 \end{align*}
 so that \eqref{eq:2} holds. 
 By taking a limit, \eqref{eq:2} implies that $\LL([k2^{-n},(k+2)2^{-n}])\leq B_{n-1}+2\sum_{m\geq n}A_m$ where we note that $B_{n-1}=0$. Furthermore, we note that 
 $$ w_M(f,2^{-n})\leq \max_{0\leq k\leq 2^n-2}\LL([k2^{-n},(k+2)2^{-n}]) $$
 because three numbers within distance $2^{-n}$ can all be fitted into the same interval $[k2^{-n},(k+2)2^{-n}]$ for some $k$. We deduce that
 $$ w_M(f,2^{-n})\leq 2\sum_{m\geq n}A_m . $$
 Finally, let
 $$ K_N=\bigcap_{n\geq N}\left\{f\in \D[0,1]: w_M(f,2^{-n})\leq 2\frac{2^{-n\beta}}{1-2^{-\beta}}\right\} , $$
 so that 
 \begin{align*}
 \Prob{X_{(\eps)}\notin K_N} & \leq \sum_{n\geq N} \bP\Big(2\sum_{m\geq n}A_m\geq 2\frac{2^{-n\beta}}{1-2^{-\beta}}\Big) \\
 & \leq \sum_{n\geq N} \sum_{m\geq n}\Prob{A_m\geq 2^{-m\beta}} \\
 & \leq \sum_{n\geq N}\sum_{m\geq n}c2^m \exp(-C'(\alpha)\eps^{-\alpha'} 2^{m(1-\alpha\beta)/(\alpha-1)}) \\
 & \leq
   C''2^N\exp(-C'(\alpha)\eps^{-\alpha'}2^{N(1-\alpha\beta)/(\alpha-1)})\, ,
 \end{align*}
 for some universal constant $C''=C''(\alpha)>0$.
 We finally deduce that 
 $$ \limsup_{\eps\downarrow 0} \eps^{\alpha'} \log \Prob{X_{(\eps)}\notin K_N} \leq -C'2^{N(1-\alpha\beta)/(\alpha-1)} , $$
 which converges to $-\infty$ as $N\to\infty$.

 It remains to prove \ref{item tight 2} 
 Notice that for every choice of
 $0=t_0<t_1<\ldots<t_k=1$ in $\mathcal{D}$ with
 $\max\{t_i-t_{i-1}:1\leq i\leq k\}<\delta$, it holds that
$$\sup_{t\in[0,1]}|X_{(\eps)}(t)|\leq \max\big\{|X_{(\eps)}(t_i)|:1\leq
i\leq k\big\}+w_M(X_{(\eps)},\delta) ,$$
so that $$\Prob{\sup_{t\in[0,1]}|X_{(\eps)}(t)|> A}\leq
\sum_{i=1}^k\Prob{|X_{(\eps)}(t_i)|>
  A/2}+\Prob{w_M(X_{(\eps)},\delta)>A/2} .$$
From the fact that the
$X_{(\eps)}(t_i),1\leq i\leq k$ are exponentially tight, and by \ref{item tight 1}, we
obtain the existence of $A_N\in (0,\infty)$ such that 
 $$  \limsup_{\eps\downarrow
   0}\eps^{\alpha'}\log \Prob{\sup_{t\in \mathcal{
     D}}|X_{(\eps)}(t)|> A_N} < -N .$$
 We deduce that the relatively compact sets of $\D[0,1]$ given by $\{f\in \D[0,1]:\sup_{t\in \mathcal{D}}|f(t)|\leq A_N\}\cap K_N$ fulfill the definition of exponential tightness.
 \end{proof}

%_oOo_--------------------------------------------------------------_oOo_%
%%%_oOo_---LDP for the Finite-Dimensional Case---_oOo_%%%
%_oOo_--------------------------------------------------------------_oOo_%
\section{Large deviations for the finite-dimensional marginal distributions}\label{section:LDP-finite}

 In this section we prove the following proposition.
 
 \begin{proposition}[{\fontfamily{lmss}\selectfont LDP} for the marginals of $\Lex$]\label{prop:LDP-finite-dim}
 Let $\sigma=(t_1,\ldots,t_n)$, where $0<t_1<\dots<t_n<1$, be fixed. Under $\bP$
 the laws of $\eps(\Lex_{t_1},\dots,\Lex_{t_n})$ satisfy an
 {\fontfamily{lmss}\selectfont LDP} in $\R^n$ with speed
 $\eps^{-\alpha'}$ and good rate function 
 $$ J_\sigma(x_1,\dots,x_n) =
 \begin{cases}
   \displaystyle{c_\alpha \sum_{i=1}^n (t_{i+1}-t_i
   )\left(\frac{(x_i-x_{i+1})_+}{t_{i+1}-t_i}\right)^{\alpha'}} & \mbox{
     if }x_1,\ldots,x_n\in \R_+\\
   \infty& \mbox{ otherwise}
 \end{cases}
   $$
   with the convention that $x_{n+1}=0$ and $t_{n+1}=1$.
 \end{proposition}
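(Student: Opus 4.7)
The plan is to deduce the LDP from the explicit finite-dimensional density \eqref{eq:marginal-laws}, combined with the stable density estimates \eqref{eq:19} and a Laplace-type argument. After the rescaling $x_i\mapsto x_i/\eps$, the density of $\eps(\Lex_{t_1},\ldots,\Lex_{t_n})$ on $\R_+^n$ reads
$$g^\eps_\sigma(x_1,\ldots,x_n)=\eps^{-n}\,\alpha\,\Gamma(1-1/\alpha)\,j_{t_1}(x_1/\eps)\prod_{i=1}^{n-1}p^{(0,\infty)}_{t_{i+1}-t_i}(x_i/\eps,x_{i+1}/\eps)\,q_{x_n/\eps}(1-t_n),$$
and the target is to show $\eps^{\alpha'}\log g^\eps_\sigma(x)\to -J_\sigma(x)$ uniformly on compact subsets of $(0,\infty)^n$.

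\textbf{Logarithmic asymptotics of the explicit factors.} From the scaling relations $p_t(z)=t^{-1/\alpha}p_1(zt^{-1/\alpha})$ and $q_x(t)=(x/t)p_t(-x)$, the left-tail estimate in \eqref{eq:19} yields, for every fixed $z\in\R$ and $x,t>0$,
$$\eps^{\alpha'}\log p_t(z/\eps)\longrightarrow -c_\alpha t\,(z_-/t)^{\alpha'}\quad\text{and}\quad \eps^{\alpha'}\log q_{x/\eps}(t)\longrightarrow -c_\alpha t\,(x/t)^{\alpha'}.$$
The entrance-law factor is negligible, $\eps^{\alpha'}\log j_{t_1}(x_1/\eps)\to 0$, since $j_t$ is bounded and has at most polynomial decay at infinity (readable off the polynomial right tail of $p_t$ via the formula giving $j_t$ as the density of the excursion entrance law, cf.\ \cite{Monrad-Silverstein}).

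\textbf{The main step: transferring the asymptotics to $p^{(0,\infty)}_t$.} The key technical point is that the killed density has the same logarithmic asymptotic as the free one. From the first-passage decomposition
$$p_t(y-x)=p^{(0,\infty)}_t(x,y)+\int_0^t q_x(s)\,p_{t-s}(y)\,\d s,$$
the upper bound $p^{(0,\infty)}_t(x,y)\le p_t(y-x)$ is immediate. For the matching lower bound at $(x/\eps,y/\eps)$ with $x,y>0$, one checks that the integrand in the correction decays as $\exp(-c_\alpha x^{\alpha'}s^{-1/(\alpha-1)}\eps^{-\alpha'})$ (exponential, coming from $q_{x/\eps}(s)$) multiplied by a polynomial factor in $\eps^{-1}$ (from the polynomial \emph{right} tail of $p_{t-s}(y/\eps)$ in \eqref{eq:19}). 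A Laplace estimate in $s\in[0,t]$ --- where the minimum of $s\mapsto x^{\alpha'}s^{-1/(\alpha-1)}$ is attained at $s=t$ --- shows that the correction decays at rate at least $c_\alpha x^{\alpha'}/t^{1/(\alpha-1)}$, which is \emph{strictly larger} than the rate $c_\alpha((x-y)_+)^{\alpha'}/t^{1/(\alpha-1)}$ of $p_t((y-x)/\eps)$ whenever $y>0$. Hence the correction is exponentially negligible and $p^{(0,\infty)}_t(x/\eps,y/\eps)$ shares the log-asymptotic of $p_t((y-x)/\eps)$. Multiplying the contributions of all factors produces $\eps^{\alpha'}\log g^\eps_\sigma(x)\to -J_\sigma(x)$, uniform on compacts of $(0,\infty)^n$ because the error terms in \eqref{eq:19} are uniform.

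\textbf{From density asymptotics to the LDP.} The uniform estimate $g^\eps_\sigma(x)=\exp(-(J_\sigma(x)+o(1))\eps^{-\alpha'})$ yields the LDP by a classical argument. For the upper bound on a closed $F\subset\R^n$, decompose $F=(F\cap[0,R]^n)\cup(F\setminus[0,R]^n)$: the compact piece is handled by a finite cover of $F\cap[0,R]^n$ by small balls centred near minimisers of $J_\sigma|_F$, while the tail piece is controlled by the superexponential decay of $g^\eps_\sigma$ at infinity (which simultaneously supplies exponential tightness, and thus the goodness of $J_\sigma$). For the lower bound on an open $G$, pick $x^\star\in G\cap(0,\infty)^n$ almost attaining $\inf_G J_\sigma$ and integrate the uniform lower density estimate over a small ball around $x^\star$; minimisers lying on $G\cap\partial\R_+^n$ are reached by approximation from the interior, using continuity of $J_\sigma$ on $[0,\infty)^n$. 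The expected main obstacle is the killed-density step: absent the asymmetry in \eqref{eq:19}, the correction in the first-passage decomposition could dominate $p_t(y-x)$, and the crux of the proof is to exploit the gap between the exponential left tail and the polynomial right tail of $p_1$ to show that this never happens when $y>0$.
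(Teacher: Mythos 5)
Your proposal is correct and follows the same essential route as the paper: compute the marginal density from \eqref{eq:marginal-laws}, use the first-passage decomposition $p_t(y-x)=p^{(0,\infty)}_t(x,y)+\int_0^t q_x(s)p_{t-s}(y)\,\d s$ (Lemma \ref{lem:error-p_t}), and exploit the asymmetry between the polynomial right tail and the stretched-exponential left tail of $p_1$ from \eqref{eq:19} to show the killed density shares the logarithmic asymptotic of the free one whenever the terminal point is strictly positive --- this is exactly the content of Lemma \ref{lem:bound-error}, which you reconstruct via the Laplace estimate in $s$. The only genuine structural difference is that you phrase the argument as a uniform pointwise logarithmic asymptotic for the density $g^\eps_\sigma$ on compacts of $(0,\infty)^n$, and then run a textbook Laplace argument to obtain separate LDP upper and lower bounds; the paper instead invokes \cite[Lemma 5]{Serlet}, which reduces the LDP (given goodness of $J_\sigma$) to proving a single two-sided limit $\lim\eps^{\alpha'}\log\Prob{\eps(\Lex_{t_1},\dots,\Lex_{t_n})\in G}=-\inf_G J_\sigma$ for open $G$, handling the lower bound via hypercubes and the integral estimate \eqref{eq:13} for $j_{t_1}$ rather than via a pointwise lower bound on $j_{t_1}$. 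Your version is slightly more demanding in one respect: you need a pointwise polynomial lower bound on $j_{t_1}(x/\eps)$ (not just the integrated bound \eqref{eq:13}); this is indeed available from \cite{Monrad-Silverstein} as you note, but the paper's integration-over-a-hypercube formulation sidesteps the issue entirely. Both routes are sound, and your closing observation about where the proof would fail without the tail asymmetry correctly identifies the crux.
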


 The fact that $J_\sigma$ is a good rate function on $\R^n$ is
 easy to see. Indeed, it is clearly continuous, and for every $c>0$, $J(x_1,\ldots,x_n)\leq c$ implies
 $(x_i-x_{i+1})_+\leq c'$ for $1\leq i\leq n$ and $x_n\leq c'$, where $c'$ is some
 positive number depending only on $t_1,\ldots,t_n,c$ 
 that $x_i\leq x_{i+1}+c'$ 
 that $\max_{1\leq i\leq n}x_i\leq n c'$, and the level sets of $J_\sigma$ are
 therefore compact.

%%%%%%%
 \subsection{Estimates for transition densities}
 
 We will need some crucial estimates for the tails of the transition
 densities, $p_t(x)$, and for the density of the entrance law, $j_t(x)$.
In this section, we will make use of
positive, finite universal constants $c_1,c_2$ depending only on
$\alpha$, but whose values may vary from
line to line, and of non-universal constants $c,C$ depending on some
extra parameters that will always be specified. 
 
First, \cite[Equation (14.35)]{Sato} entails that 
for every $x\geq 0$, we have 
\begin{equation}\label{eq:9}
 c_1 \exp\left(-c_\alpha x^{\alpha'}\right) 
 \leq p_1(-x) \leq 
 c_2(1+ x^{\frac{2-\alpha}{2\alpha-2}}
 )\exp\left(-c_\alpha x^{\alpha'}\right),
\end{equation}
and \cite[Equation (14.34)]{Sato} entails that 
 \begin{equation} \label{eq:10}
 c_1(1+ x)^{-\alpha-1} \leq p_1(x) \leq c_2(1+ x)^{-\alpha-1} .
 \end{equation}

By the scaling relations  for $p_t(x)$, we deduce that for every $x>0$
and $t\in (0,1]$,
\begin{equation}
  \label{eq:11}
   c_1
   \exp\left(-c_\alpha\left(\frac{x}{
         t^{1/\alpha}}\right)^{\alpha'}\right)  
 \leq p_t(-x) \leq 
 \frac{c_2}{t^{1/\alpha}}\left(1+ \left(\frac{x}{t^{1/\alpha}}\right)^{\frac{2-\alpha}{2\alpha-2}}
 \right)\exp\left(-c_\alpha\left(\frac{x}{t^{1/\alpha}}\right)^{\alpha'}\right) 
\end{equation}
 and
 \begin{equation} \label{eq:12}
   c_1 (1+x/t^{1/\alpha} )^{-\alpha-1} \leq p_t(x) \leq c_2 (1+x)^{-\alpha-1} .
 \end{equation}
In particular, note that for every fixed $\eta\in (0,1)$ and
$x_0>0$, we have, for any $t\in (0,1)$ and $x\geq x_0$, 
\begin{equation}
  \label{eq:11bis}
  p_t(-x) \leq 
 C(\eta,x_0)\exp\left(-(1-\eta)
   c_\alpha\left(\frac{x}{t^{1/\alpha}}\right)^{\alpha'}\right). 
\end{equation}
A similar bound holds for $q_x(t)=\frac{x}{t}p_t(-x)$, with possibly
different constants.

Next, by \cite[Formula (3.20)]{Monrad-Silverstein}, it holds that 
there exists a positive constant $\eps_0>0$
 such that for all $\eps\in (0,\eps_0)$ and  for $0<a<b<+\infty$, 
 \begin{equation}
   \label{eq:13}
    c_1 \frac{\eps^{\alpha+1}}{a^\alpha t^{\frac{1-\alpha}{\alpha}}} \left(1 - \left(\frac{a}{b}\right)^\alpha \right) 
 \leq \int_{a/\eps}^{b/\eps} j_{t}(y)\, \d y \leq c_2
 \frac{\eps^{\alpha+1}}{a^\alpha t^{\frac{1-\alpha}{\alpha}}} \left(1
   - \left(\frac{a}{b}\right)^\alpha \right) .
\end{equation}

These estimates will allow us to evaluate the densities involved in
\eqref{eq:marginal-laws} in the large deviation regime. First we give
an explicit formula for $p^{(0,\infty)}_t(x,y)$. 
 
 \begin{lemma}\label{lem:error-p_t}
 Let $t>0$ and $x,y>0$. Then
 \begin{equation}\label{eq:error-p_t}
 p^{(0,\infty)}_t(x,y) = p_t(y-x) - \int_0^t q_x(s) p_{t-s}(y) \,\d s .
 \end{equation}
 \end{lemma}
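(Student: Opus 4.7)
The identity is a classical first-passage decomposition that exploits the absence of negative jumps, so I would prove it by applying the strong Markov property at the first hitting time of $0$.

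More precisely, let $T_0 = \inf\{s \geq 0 : L_s = 0\}$ denote the first hitting time of $0$. Since $L$ under $\bP_x$ (with $x > 0$) has no negative jumps, the process cannot cross below $0$ without first hitting $0$, so the exit time $\tau_{(-\infty,0)}$ from $(0,\infty)$ coincides a.s.\ with $T_0$. I would then split
\begin{equation*}
  \bP_x(L_t \in \d y) = \bP_x(L_t \in \d y,\, T_0 > t) + \bP_x(L_t \in \d y,\, T_0 \leq t).
\end{equation*}
The left-hand side has density $p_t(y-x)$, and the first term on the right-hand side is by definition $p_t^{(0,\infty)}(x,y) \d y$, since $\{T_0 > t\} = \{\tau_{(-\infty,0)} > t\}$.

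For the second term, I would apply the strong Markov property at $T_0$: on the event $\{T_0 \leq t\}$, the process $(L_{T_0+u}, u \geq 0)$ is a Lévy process starting from $0$, independent of $\Fil_{T_0}$. Combined with the fact recalled earlier in Section~\ref{sec:excurs-bridg-spectr} that the law of $T_0$ under $\bP_x^{(0,\infty)}$ has density $q_x(s) = \frac{x}{s} p_s(-x)$, this yields
\begin{equation*}
  \bP_x(L_t \in \d y,\, T_0 \leq t) = \left( \int_0^t q_x(s)\, p_{t-s}(y)\, \d s \right) \d y.
\end{equation*}
Rearranging gives the claimed formula.

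The proof is essentially routine, and the only minor subtlety is justifying the identification $\tau_{(-\infty,0)} = T_0$ (which uses spectral positivity) and the fact that $q_x$ is indeed the density of $T_0$ under $\bP_x$ (which was already stated in the paper via \cite[Corollary VII.3]{Bertoin}). No serious obstacle is expected.
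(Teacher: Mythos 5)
Your proof is correct and takes essentially the same approach as the paper: both decompose on the event $\{T_0 > t\}$ versus $\{T_0 \leq t\}$ and apply the strong Markov property at $T_0$, with the paper phrasing this via test functions and you working directly with measures. Your explicit note that $\tau_{(-\infty,0)} = T_0$ because of spectral positivity is a detail the paper leaves implicit but is worth stating.
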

 
 \begin{proof}
 Let $f:\R\rightarrow\R_+$ be a measurable function. On the one hand,
 we have by definition that 
 $$ \bE_x^{(0,\infty)}\left[ f(L_t) \right] = \int_\R f(y) p_t^{(0,\infty)}(x,y) \,\d y . $$
 On the other hand, if we denote by $T_0$ the first hitting time of $0$ by $(L_t)_{t\geq0}$, and $L^{(s)}_{t-s} = L_{s+(t-s)}$ for $s<t$, we then have
 \begin{eqnarray*}
 \bE_x^{(0,\infty)}\left[ f(L_t) \right]
 & = & \bE_x\left[ f(L_t) \1{T_0>t} \right] \\
 & = & \bE_x\left[ f(L_t) \right] - \bE_x\left[ f(L_t) \1{T_0<t} \right] \\
 & = & \int_\R f(y) p_{t}(y-x) \d y - \bE_x\left[ \1{T_0<t} \E{f(L^{(T_0)}_{t-T_0}) \mid \Fil_{T_0}} \right] \\
 & = & \int_\R f(y) p_{t}(y-x) \d y - \int_0^t q_x(s) \E{f(L^{(s)}_{t-s})} \d s \\
 & = & \int_\R f(y) p_{t}(y-x) \d y - \int_0^t q_x(s) \int_\R f(y)p_{t-s}(y)\d y \d s \\
 & = & \int_\R f(y) \left\{ p_t(y-x) - \int_0^t q_x(s) p_{t-s}(y) \,\d s \right\} \d y ,
 \end{eqnarray*}
 where we used the Markov property in the third equality. Thus
 Equation \eqref{eq:error-p_t} follows.
 \end{proof}

Using the scaling properties of $p_t(x)$ and $q_x(t)$, we may deduce from
Lemma \ref{lem:error-p_t} a bound on the error when we approximate
$p^{(0,\infty)}_t(x,y)$ by $p_t(y-x)$, as follows. 
 
 \begin{lemma}\label{lem:bound-error}
 For any fixed $t>0$ and $\eta>0$, there exists $C=C(\eta,t)>0$ such that for
 every $x,y>0$, 
 \begin{equation}
 \int_0^t q_{x}(s) p_{t-s}\left(y\right) \d s 
 \leq C\exp\left(-c_\alpha(1-\eta) \left(\frac{x^\alpha}{t}\right)^{\frac{1}{\alpha-1}} \right)  .
\end{equation}
 \end{lemma}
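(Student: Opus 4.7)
The starting point is the exponential estimate \eqref{eq:11bis} and the parallel bound mentioned immediately after it: for any fixed $\eta>0$ and $x_0>0$, there exists $C=C(\eta,x_0)$ such that
\[q_x(s)\le C\exp\bigl(-(1-\eta)c_\alpha(x/s^{1/\alpha})^{\alpha'}\bigr),\qquad x\ge x_0,\ s\in(0,1).\]
My plan is to factor the target exponential out of the integral, using the monotonicity of $s\mapsto(x/s^{1/\alpha})^{\alpha'}$, and then show that the remaining integrand yields only a finite $t$-dependent constant. Note that $(x/t^{1/\alpha})^{\alpha'}=(x^\alpha/t)^{1/(\alpha-1)}$, so the two forms of the bound coincide.

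The argument splits according to whether $x$ is ``large'' or ``small''. First, pick any $x_0>0$ (say $x_0=1$) and restrict to $x\ge x_0$. Since $s\mapsto(x/s^{1/\alpha})^{\alpha'}$ is decreasing and achieves its minimum on $(0,t]$ at $s=t$, the above display gives the uniform estimate
\[q_x(s)\le C(\eta,x_0)\exp\bigl(-(1-\eta)c_\alpha(x/t^{1/\alpha})^{\alpha'}\bigr),\qquad s\in(0,t].\]
Combined with the elementary bound $p_r(y)\le \|p_1\|_\infty r^{-1/\alpha}$ (which follows from the scaling identity and the boundedness of $p_1$), this gives
\[\int_0^t q_x(s)p_{t-s}(y)\,\mathrm{d}s\le C(\eta,x_0)\|p_1\|_\infty\exp\bigl(-(1-\eta)c_\alpha(x/t^{1/\alpha})^{\alpha'}\bigr)\int_0^t(t-s)^{-1/\alpha}\,\mathrm{d}s,\]
and the remaining integral is equal to $\tfrac{\alpha}{\alpha-1}t^{(\alpha-1)/\alpha}$, finite because $1/\alpha<1$.

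For the complementary case $0<x<x_0$, I invoke Lemma~\ref{lem:error-p_t}, which yields
\[\int_0^t q_x(s)p_{t-s}(y)\,\mathrm{d}s=p_t(y-x)-p_t^{(0,\infty)}(x,y)\le p_t(y-x)\le\|p_1\|_\infty t^{-1/\alpha}.\]
Since on this range $\exp\bigl(-(1-\eta)c_\alpha(x/t^{1/\alpha})^{\alpha'}\bigr)\ge\exp\bigl(-(1-\eta)c_\alpha(x_0/t^{1/\alpha})^{\alpha'}\bigr)>0$, the bound just obtained can be rewritten, after multiplying and dividing by this strictly positive constant, in the desired form with a constant depending only on $\eta$, $x_0$ and $t$. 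Combining the two ranges yields the lemma. The only delicate point is that the bound on $q_x$ in \eqref{eq:11bis} degenerates as $x\downarrow0$, which is handled cleanly by the exact identity from Lemma~\ref{lem:error-p_t}; everything else is controlled by crude scaling estimates.
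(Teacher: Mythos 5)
Your proof is correct, but it follows a genuinely different route from the paper's. The paper splits the integral in $s$: on $[0,t/2]$ it bounds $p_{t-s}(y)\le\|p_{t/2}\|_\infty$ and integrates the fast-decaying $q_x(s)$, while on $[t/2,t]$ it pulls the prefactor of $q_x(s)$ out at $s=t/2$, extracts the exponential $\exp(-c_\alpha(x/t^{1/\alpha})^{\alpha'})$, and integrates the $(t-s)^{-1/\alpha}$ singularity; it then observes that the first piece is dominated by the second. Because the polynomial prefactor of $q_x$ is carried along explicitly, the absorption of that prefactor into $\exp(\eta c_\alpha(x/t^{1/\alpha})^{\alpha'})$ works uniformly in all $x>0$ without any case split. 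You instead split in $x$: for $x\geq x_0$ you invoke the pre-absorbed bound from \eqref{eq:11bis} for $q_x$, use the monotonicity of $s\mapsto(x/s^{1/\alpha})^{\alpha'}$ to take the exponential out of the integral, and then integrate the elementary bound $p_{t-s}(y)\le\|p_1\|_\infty(t-s)^{-1/\alpha}$; for $x<x_0$ you avoid the $q_x$-bound entirely (correctly, since its prefactor has a $1/x^\alpha$ that cannot be controlled uniformly in $s$ as $x\downarrow 0$) and instead use the exact identity of Lemma~\ref{lem:error-p_t} to bound everything by $p_t(y-x)\le\|p_1\|_\infty t^{-1/\alpha}$. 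Your argument is somewhat cleaner—no $s$-split, no comparison of the relative sizes of the two pieces—at the cost of invoking Lemma~\ref{lem:error-p_t} in the small-$x$ regime, which the paper's proof does not need. Both yield the same bound with a constant depending only on $\eta$ and $t$.
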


 \begin{proof}
   Using the scaling relations for $p_t(x)$, we have
   \begin{align*}
     \int_0^tq_x(s)p_{t-s}(y)\d s&=\int_0^{t/2}\frac{x}{s}p_s(-x)p_{t-s}(y)\d
                               s+\int_{t/2}^{t}\frac{x}{s}p_s(-x)p_{t-s}(y)\d
     s\\
     &\leq \|p_{t/2}\|_\infty \int_0^{t/2}\frac{x}{s}p_s(-x)\d
                               s+ \|p_1\|_\infty\int_{t/2}^t \frac{\d
       s}{(t-s)^{1/\alpha}}\frac{x}{s}p_s(-x) ,
   \end{align*}
   and then, using  \eqref{eq:11}, we get
   \begin{multline*}
        \int_0^tq_x(s)p_{t-s}(y)\d s \leq \\
        c_2\|p_{t/2}\|_\infty\int_{0}^{t/2}\frac{x\d s}{(t/2)^{\alpha}s^{1+1/\alpha}}(1+(x/s^{1/\alpha})^{\frac{2-\alpha}{2\alpha-2}})\exp\left(-c_\alpha(x/s^{1/\alpha})^{\alpha'}\right) \\
         +c_2\|p_1\|_\infty \frac{x}{(t/2)^{1+1/\alpha}}\left(1+\left(\frac{x}{(t/2)^{1/\alpha}}\right)^{\frac{2-\alpha}{2\alpha-2}}\right)\exp\left(-c_\alpha\left(\frac{x}{t^{1/\alpha}}\right)^{\alpha'}\right)\int_{t/2}^t \frac{\d     s}{(t-s)^{1/\alpha}} .
   \end{multline*}
   The second term is of the desired form, while, by performing a
   change of variables $u=s^{-\alpha'/\alpha}$, it is straightforward
   to see that the first term is negligible compared to the second. 
 \end{proof}
 
 \begin{proof}[Proof of Proposition \ref{prop:LDP-finite-dim}]
Since $J_\sigma$ is a good rate function on $\R^n$,  \cite[Lemma
5]{Serlet} shows that it suffices to prove that, for every open subset
$G\subset \R^n$,
 \begin{equation}
 \lim_{\eps\downarrow0} \eps^{\alpha'} \log
 \Prob{\eps(\Lex_{t_1},\dots,\Lex_{t_n})\in G} = - \inf_G J_\sigma \, .
 \end{equation}
Since $J_\sigma$ is infinite on $\R^n\setminus \R_+^n$, it suffices to
consider open sets $G$ of $\R_+^n$ (with the induced topology), and
this is what we do from now on.
 For convenience let us write
 $$ \Psi_\eps(x_1,\dots,x_n) =
 \prod_{i=1}^{n-1} 
   p_{t_{i+1}-t_i}^{(0,\infty)}\Big(\frac{x_i}{\eps},\frac{x_{i+1}}{\eps}\Big)
 \times  q_{x_n/\eps}(1-t_n) .$$
 Using \eqref{eq:marginal-laws}, 
 we can write
 $$ \Prob{\eps\big(\Lex_{t_1},\dots,\Lex_{t_n}\big) \in G}
 = C \eps^n \int_G \d x_1 \dots \d x_n \, j_{t_1}\Big(\frac{x_1}{\eps}\Big) \Psi_\eps(x_1,\dots,x_n)  , $$
 where $C=C(\alpha)>0$ is a positive constant depending only
 on $\alpha$.

 We start with the lower bound. For a given $\delta>0$, there exists
 $(y_1,\ldots,y_n)\in G$ such that $ J_\sigma(y_1,\dots,y_n) \leq
 \inf_G J_\sigma + \delta  $, and we may assume without loss of
 generality that $y_1,\ldots,y_n$ are pairwise distinct and all lie in $(0,\infty)$. Then, there
 exists a 
 hypercube $\mathcal Q_\delta = \prod_{i=1}^n(a_i,b_i) \subseteq G$
 containing $(y_1,\ldots,y_n)$ such that
 the intervals $[a_i,b_i]\subset (0,\infty)$ are pairwise disjoint, and 
 such that for all $(x_1,\dots,x_n) \in \mathcal Q_\delta$, we have 
 $$  J_\sigma(x_1,\dots,x_n) \leq \inf_G J_\sigma + \delta  . $$

 % -Lower Bound-%
Let us now consider the terms
$p^{(0,\infty)}_{t_{i+1}-t_i}(x_i/\eps,x_{i+1}/\eps)$ involved
in the definition of $\Psi_\eps(x_1,\ldots,x_n)$, where
$x_1,\ldots,x_n\in Q_\delta$. 
Fix $\eta\in(0,1)$. 
From \eqref{eq:12}  and Lemmas \ref{lem:error-p_t} and \ref{lem:bound-error}, 
 for every $i$ such that $y_i>y_{i+1}$, we may bound
 \begin{multline*}
   p^{(0,\infty)}_{t_{i+1}-t_i}\left(\frac{x_i}{\eps},\frac{x_{i+1}}{\eps}\right)
\\ 
\geq
c_1\exp\left(-\frac{c_\alpha}{\eps^{\alpha'}}\left(\frac{(b_i-a_{i+1})^\alpha}{t_{i+1}-t_i}\right)^{\frac{1}{\alpha-1}}\right)
-C(\eta)
  \exp\left(-\frac{c_\alpha}{\eps^{\alpha'}}(1-\eta)\left(\frac{a_i^\alpha}{t_{i+1}-t_i}\right)^{\frac{1}{\alpha-1}}\right) ,
 \end{multline*}
 and for every $i$ such that $y_i<y_{i+1}$,
  \begin{align*}
   p^{(0,\infty)}_{t_{i+1}-t_i}\left(\frac{x_i}{\eps},\frac{x_{i+1}}{\eps}\right)
\geq
c_1 \Big(1+\frac{b_{i+1}-a_i}{\eps(t_{i+1}-t_i)^{1/\alpha}}\Big)^{-\alpha-1}
-C(\eta)
  \exp\left(-\frac{c_\alpha}{\eps^{\alpha'}}(1-\eta)\left(\frac{a_i^\alpha}{t_{i+1}-t_i}\right)^{\frac{1}{\alpha-1}}\right) . 
 \end{align*}
 Also, we may bound
 $$q_{x_n/\eps}(1-t_n)\geq
 c_1
 \frac{a_n}{1-t_n}\exp\left(-\frac{c_\alpha}{\eps^{\alpha'}}\left(\frac{b_n^\alpha}{1-t_n}\right)^{\frac{1}{\alpha-1}}\right)
.$$
Therefore, by choosing $\eta$ small enough so that
$(1-\eta) a_i^{\alpha'}\geq (b_{i}-a_{i+1})^{\alpha'}$ for every $i$ such that $y_i>y_{i+1}$, we see that for every $\eps$ small enough,
$\Prob{(\Lex_{t_1},\ldots,\Lex_{t_n})\in G}$ is bounded from below by a
quantity of the form
$$ c\eps^n \tilde{\Psi}_\eps\int_{a_1}^{b_1}j_{t_1}\left(\frac{x_1}{\eps
  }\right)\d x_1 $$
where, for some constant $c$ depending only on $t_1,\ldots,t_n$,
$a_1,\ldots,a_n$ and $b_1,\ldots,b_n$, and $\eta$, 
$$\tilde{\Psi}_\eps=c\prod_{i=1}^n\left(\exp\left(-\frac{c_\alpha}{\eps^{\alpha'}}\left(\frac{(b_i-a_{i+1})^\alpha}{t_{i+1}-t_i}\right)^{\frac{1}{\alpha-1}}\right)\1{y_i>y_{i+1}}+\eps^{\alpha+1}\1{y_i<y_{i+1}}\right) , $$
with the convention that $y_{n+1}=a_{n+1}=0$.
 Using the asymptotics \eqref{eq:13}, we finally obtain
 $$ \liminf_{\eps\downarrow0} \eps^{\alpha'} \log
 \Prob{\eps(\Lex_{t_1},\dots,\Lex_{t_n})\in G} \geq -  c_\alpha\sum_{i=1}^n
\left(\frac{(b_i-a_{i+1})_+^\alpha}{t_{i+1}-t_i}\right)^{\frac{1}{\alpha-1}}. $$
 By letting $a_i$ and $b_i$ tend to $y_i$, we obtain 
 $$ \liminf_{\eps\downarrow0} \eps^{\alpha'} \log
 \Prob{\eps(\Lex_{t_1},\dots,\Lex_{t_n})\in G} \geq
 -J_\sigma(y_1,\ldots,y_n)\geq - \inf_G J
 -\delta , $$
 and since $\delta$ was arbitrary, we may conclude that 
 $$ \liminf_{\eps\downarrow0} \eps^{\alpha'} \log
 \Prob{\eps(\Lex_{t_1},\dots,\Lex_{t_n})\in G} \geq - \inf_G J . $$

 %-Upper bound-%
 The corresponding upper bound is obtained by similar arguments.
 It is
 clear that the upper bound holds if $\inf_G J = 0$, so that we may
 assume $\inf_G J >0$. By Lemma \ref{lem:error-p_t}, we have 
 $p^{(0,\infty)}_t(x,y)\leq p_t(y-x)$. Therefore, 
 $$ \Psi_\eps(x_1,\dots,x_n) \leq C x_n \prod_{i=1}^{n-1} p_{t_{i+1}-t_i}\Big(\frac{x_{i+1}-x_i}{\eps}\Big) p_{1-t_n}\left(-\frac{x_n}{\eps}\right)  , $$
 where $C$ is a positive and finite constant that depends only on $t_1,\ldots,t_n$.

 Let $\eta\in (0,1/2)$ be a fixed constant.
 Now observe that for $t>0$, and $x\in \R$, we have
$$p_t(x)\leq p_t(x)\1{x<0}+\|p_t\|_\infty\1{x\geq 0}\leq
C\exp\left(-c_\alpha(1-\eta)\frac{(x_-)^{\alpha'}}{t^{\frac{1}{\alpha-1}}}\right)
,$$
where the constant $C$ depends only on $\eta$ and $t$, but not on
$x$. A similar bound holds for $xp_t(x)$, possibly with a different
constant $C$. 
 Thus we may write for all $(x_1,\dots,x_n) \in G$, with our usual
 convention that $x_{n+1}=0$ and $t_{n+1}=1$, and for a constant $C$
 that depends on $\eta,t_1,\ldots, t_n$ but not on $x_1,\ldots,x_n$, 
 \begin{align*}
 \Psi_\eps(x_1,\dots,x_n)
 \leq & C\exp\left(-\frac{(1-\eta)}{\eps^{\alpha'}}J_\sigma(x_1,\ldots,x_n)\right) \\
 \leq & C \exp\left( -\frac{1-2\eta}{\eps^{\alpha'}} \inf_G J \right)
        \exp\left( -\frac{\eta}{\eps^{\alpha'}}
        J_\sigma(x_1,\dots,x_n) \right) .
 \end{align*}
 Since $j_{t_1}\in\Lp^\infty(\R_+)$, we obtain after changing
 $x_i/\eps$ into $x_i$ in the integral, 
 \begin{align*}
 \Prob{\eps(\Lex_{t_1},\dots,\Lex_{t_n}) \in G} 
   \leq & C \exp\left( -\frac{1-2\eta}{\eps^{\alpha'}} \inf_G J \right)
          \int_{\R_+^n} \d x_1 \dots \d x_n \,  \exp\big( -\eta
          J_\sigma(x_1,\ldots,x_n) \big) 
 \end{align*}
and the last integral is finite. This implies that 
 $$ \limsup_{\eps\downarrow0} \eps^{\alpha'} \log \Prob{\eps(\Lex_{t_1},\dots,\Lex_{t_n}) \in G} 
 \leq - (1-2\eta) \inf_G J  . $$
 Since this is true for all $\eta>0$, we get the upper bound
 \[ \limsup_{\eps\downarrow0} \eps^{\alpha'} \log \Prob{\eps(\Lex_{t_1},\dots,\Lex_{t_n}) \in G} 
 \leq - \inf_G J  . \qedhere \]
 \end{proof}

%-----------------------------------------------------------------------------------%
%%%---Exponential Tightness for the Normalized Excursion---%%%
%-----------------------------------------------------------------------------------%
\section{Exponential tightness for the normalized excursion}\label{section:expo-tight}

 In this section we prove the following proposition. 
 
 \begin{proposition}[Exponential tightness of $\Lex$]\label{prop:tightness}
 Under $\bP$, the laws of $(\eps \Lex_t)_{t\in[0,1]}$ as
 $\eps\downarrow0$ are exponentially tight in $(\D[0,1],\mathrm{dist})$ with speed $\eps^{-\alpha'}$.
\end{proposition}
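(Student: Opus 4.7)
The plan is to apply Theorem~\ref{sec:basic-results-skor-2} to the family $X_{(\eps)} = \eps\Lex$; since $\Lex_0 = 0$ almost surely, the required initial condition $X_{(\eps)}(0) = 0$ is met. Condition~2 of that theorem follows directly from Proposition~\ref{prop:LDP-finite-dim}: because $J_\sigma$ is a good rate function on $\R^n$, the {\fontfamily{lmss}\selectfont LDP} it governs implies exponential tightness of the marginals at every $(t_1,\ldots,t_n)\in (0,1]^n$, and we take $\mathcal{D}$ to be any countable dense subset of $(0,1]$ containing $1$.

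The substantive task is to verify Condition~1, namely
\begin{equation*}
\E{\exp\big(\lambda M(\eps\Lex_{t_1}, \eps\Lex_t, \eps\Lex_{t_2})\big)} \leq c\exp\big(C(\lambda\eps)^\alpha(t_2-t_1)\big)
\end{equation*}
uniformly in $0\le t_1\le t\le t_2\le 1$ and $\lambda>0$. By the positive homogeneity $M(\eps x,\eps y,\eps z)=\eps M(x,y,z)$, writing $\mu=\lambda\eps$ turns this into
\begin{equation*}
\E{\exp\big(\mu M(\Lex_{t_1}, \Lex_t, \Lex_{t_2})\big)} \leq c\exp\big(C\mu^\alpha(t_2-t_1)\big)\, , \qquad \mu>0.
\end{equation*}

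To establish this, I would use the explicit joint density from \eqref{eq:marginal-laws} for $0<t_1<t<t_2<1$ and split the integration over $y=\Lex_t$ into the three regions $\{y\in[a\wedge b,a\vee b]\}$, $\{y>a\vee b\}$ and $\{y<a\wedge b\}$, where $a=\Lex_{t_1}$, $b=\Lex_{t_2}$. On the first region $M(a,y,b)=0$ and the Chapman--Kolmogorov identity collapses that contribution back into the probability $\Prob{(\Lex_{t_1},\Lex_{t_2})\in\cdot}$. On each of the two remaining regions, the integrand contains at least one factor $p^{(0,\infty)}_s(x,x')$ in which $x>x'$; Lemma~\ref{lem:error-p_t} together with the Gaussian-like bound \eqref{eq:11} then provides an upper estimate of the form $C\exp(-c_\alpha(x-x')^{\alpha'}/s^{1/(\alpha-1)})$, which one plays off against the tilt $\exp(\mu|y-(a\vee b)|)$ or $\exp(\mu|(a\wedge b)-y|)$. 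This is a standard Legendre-type computation: the convex dual of $u\mapsto c_\alpha u^{\alpha'}/s^{1/(\alpha-1)}$ is, up to the numerical constant, $\mu\mapsto C\mu^\alpha s$, and it produces exactly the prefactor $\exp(C\mu^\alpha(t_2-t_1))$. The boundary cases $t_1=0$ or $t_2=1$ reduce to similar calculations via \eqref{eq:15} and the scaling relation \eqref{eq:3}.

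The main obstacle will be to verify that, after integrating against the outer factors $j_{t_1}(a)$ and $q_b(1-t_2)$, the resulting constants $c$ and $C$ are genuinely \emph{uniform} in $t_1,t_2$, and in particular do not degenerate as $t_1\downarrow 0$ or $t_2\uparrow 1$. This calls for careful bookkeeping: one must balance the small-$s$ blow-ups arising in the Gaussian-type bounds against the singular behaviour of the entrance law and first-passage-time density near the endpoints, which is controlled by the scaling relations for $j_t$ and $q_x$. Once this uniform bound is in hand, Theorem~\ref{sec:basic-results-skor-2} concludes the proof.
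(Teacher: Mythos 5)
Your overall plan — apply Theorem~\ref{sec:basic-results-skor-2} to $\eps\Lex$, getting Condition~2 from Proposition~\ref{prop:LDP-finite-dim} and Condition~1 from an exponential moment bound on the $M$-oscillation — is the strategy the paper uses. Condition~2 is handled exactly as you say. However, there is a genuine gap in Condition~1: the moment bound
\begin{equation*}
\E{\exp\big(\lambda M(\Lex_{t_1},\Lex_t,\Lex_{t_2})\big)} \leq c\exp\big(C\lambda^\alpha(t_2-t_1)\big)
\end{equation*}
does \emph{not} hold with constants $c,C$ uniform over $t_1\leq t\leq t_2$ in $[0,1]$. The ``careful bookkeeping'' you defer to at the end of your sketch cannot be carried out; the constant $c$ genuinely degenerates as $t_2\uparrow 1$. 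The source of the degeneracy is the first-passage density factor $q_y(1-t_2)=\frac{y}{1-t_2}p_{1-t_2}(-y)$ appearing in \eqref{eq:marginal-laws}: after integrating against $j_{t_1}$ and changing variables, the relevant bound is of the order $(1-t_2)^{-1-1/\alpha}$. Concretely, one must control $\int_z^\infty j_{t_1}(x)\,q_{x-z}(1-t_2)\,\d x$ uniformly in $z$, and while scaling of $j$ keeps the $t_1\downarrow 0$ end harmless (since $\int x\,j_{t_1}(x)\,\d x$ is constant in $t_1$), the prefactor $\|p_{1-t_2}\|_\infty/(1-t_2)\asymp(1-t_2)^{-1-1/\alpha}$ blows up. The paper explicitly flags this (``the criterion given in Theorem~\ref{sec:basic-results-skor-2} cannot immediately be used to obtain exponential tightness over the whole interval $[0,1]$'').

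The paper's fix is in two parts that you would need to supply. First, it proves your moment bound only for $s,t,u\in[0,1-\delta]$ with a constant $C(\alpha,\delta)$ depending on $\delta$ (Lemma~\ref{sec:expon-tightn-norm}), which suffices to make the stopped processes $(\Lex_{t\wedge(1-\delta)})_{t\in[0,1]}$ exponentially tight. Second, it proves a separate endpoint estimate (Lemma~\ref{sec:expon-tightn-norm-1}): for every $\lambda,\gamma>0$ there is a $\delta$ with
\begin{equation*}
-\limsup_{\eps\downarrow0}\eps^{\alpha'}\log\Prob{\sup_{1-\delta\le t\le1}\eps\Lex_t\ge\gamma}\ge\lambda\, ,
\end{equation*}
obtained from the first-passage-bridge representation \eqref{eq:15}--\eqref{eq:16} together with the Gaussian-type tail \eqref{eq:11bis}. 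One then builds the compact sets $K_\lambda$ of Definition~\ref{def:LDP} by intersecting, over a sequence $\delta_n\downarrow 0$, the compactness conditions coming from Lemma~\ref{sec:expon-tightn-norm} on $[0,1-\delta_n]$ with smallness conditions $\sup_{[1-2\delta_n,1]}|f|\le 2^{-n}$ coming from Lemma~\ref{sec:expon-tightn-norm-1}. Without this two-interval decomposition, the proof does not go through.

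One more small point: in the paper's Lemma~\ref{sec:expon-tightn-norm} the expectation of $e^{\lambda M}$ is bounded by first splitting into the five order cases of $\Lex_s,\Lex_t,\Lex_u$; this is cleaner than your region-splitting over $\{y\in[a\wedge b,a\vee b]\}$ etc., because it immediately reduces the problem to two-time estimates of the form $\E{e^{\lambda(\Lex_a-\Lex_b)}\1{\Lex_b\le\Lex_a}}$ rather than a genuine three-time computation, but this is a presentational difference rather than a mathematical one.
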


In order to prove this result, we want to apply Theorem
\ref{sec:basic-results-skor-2}. Note that we already have exponential
tightness for $(\eps \Lex(q),\eps>0)$ for every $q\in [0,1]$, as a
consequence of Proposition \ref{prop:LDP-finite-dim} for $n=1$. It
turns out, however, that the criterion given in Theorem
\ref{sec:basic-results-skor-2} cannot immediately be used to obtain
exponential tightness over the
whole interval $[0,1]$. We must instead treat the intervals
$[0,1-\delta]$ and $[1-\delta,1]$ separately. 
 
 \begin{lemma}\label{sec:expon-tightn-norm}
 For every $\delta\in(0,1)$, there exists a constant $C=C(\alpha,\delta)\in(0,\infty)$ such that for every $s,t,u\in[0,1-\delta]$ with $s\le t\le u$ and $\lambda\ge0$,
 $$ \E{\exp\big(\lambda M(\Lex_s,\Lex_t,\Lex_u)\big)}
  \le C \exp\big((u-s)\lambda^\alpha\big). $$
 \end{lemma}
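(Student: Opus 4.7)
The strategy rests on the pointwise inequality
$$M(a,b,c)\le\max\bigl((a-b)_+,\,(b-c)_+\bigr),$$
which follows by case analysis on the position of $b$: if $b>\max(a,c)$ then $M=b-\max(a,c)\le b-c=(b-c)_+$; if $b<\min(a,c)$ then $M\le a-b=(a-b)_+$; and otherwise $M=0$. The crucial point is that both $(\Lex_s-\Lex_t)_+$ and $(\Lex_t-\Lex_u)_+$ represent \emph{downward} moves of the excursion, and it is precisely such downward moves that inherit good exponential moments from the Laplace transform identity $\bE[e^{-\lambda L_\tau}]=e^{\tau\lambda^\alpha}$; upward moves, by contrast, have only polynomial tails. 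Applying $e^{\lambda\max(X,Y)}\le e^{\lambda X}+e^{\lambda Y}$ reduces the problem to showing, uniformly over $0\le a<b\le 1-\delta$, that
$$\E{e^{\lambda(\Lex_a-\Lex_b)_+}}\le C(\alpha,\delta)\,e^{(b-a)\lambda^\alpha}.$$

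To establish this bound I will use the marginal density formula~\eqref{eq:marginal-laws} together with the elementary estimate $e^{\lambda u_+}\le 1+e^{\lambda u}$ to write
$$\E{e^{\lambda(\Lex_a-\Lex_b)_+}}\le 1+\alpha\Gamma\!\bigl(1-\tfrac{1}{\alpha}\bigr)\int_0^\infty j_a(x)\,e^{\lambda x}\!\int_0^\infty\! e^{-\lambda y}\,p^{(0,\infty)}_{b-a}(x,y)\,q_y(1-b)\,\d y\,\d x.$$
Using $p^{(0,\infty)}_{b-a}(x,y)\le p_{b-a}(y-x)$ from Lemma~\ref{lem:error-p_t} and the uniform bound $q_y(1-b)\le C(\delta)$ for $y\ge 0$ --- which follows from~\eqref{eq:11} since $1-b\ge\delta$ --- the inner integral is at most $C(\delta)\,e^{-\lambda x}\,e^{(b-a)\lambda^\alpha}$ once we extend it to all of $\R$ and recognize the Lévy Laplace transform of $p_{b-a}$. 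The factors $e^{\lambda x}$ and $e^{-\lambda x}$ then cancel, reducing the problem to controlling $\int_0^\infty j_a(x)\,\d x$.

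Here lies the main technical obstacle: we have $\int_0^\infty j_a(x)\,\d x=\Gamma(1-\tfrac{1}{\alpha})^{-1}a^{-1/\alpha}$, which diverges as $a\downarrow 0$, so the simple argument above is not uniform in $a$. To overcome this, one must retain the stretched-exponential decay of $q_y(1-b)$ in the variable $y$ --- namely $q_y(1-b)\le C(\delta)\exp(-c\, y^{\alpha'})$, which also follows from~\eqref{eq:11} --- and exploit this decay against the scaling structure $j_a(x)=a^{-2/\alpha}j_1(a^{-1/\alpha}x)$. Equivalently, one can reformulate the entire computation in terms of the probability density $\rho_a(x)=\alpha\Gamma(1-\tfrac{1}{\alpha})j_a(x)q_x(1-a)$ of $\Lex_a$ together with the first-passage bridge representation~\eqref{eq:16}, so that the factor $q_x(1-a)$ appearing in the denominator of~\eqref{eq:16} cancels against the one in $\rho_a$ and yields a uniform constant $C(\alpha,\delta)$ after a more delicate tail analysis. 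The boundary case $a=0$ is trivial, since $\Lex_0=0$ forces $(\Lex_0-\Lex_b)_+=0$ and the left-hand side then equals $1$, which serves as a consistency anchor for this more refined uniformity argument.
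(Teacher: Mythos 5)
Your opening reduction is sound: the pointwise inequality $M(a,b,c)\le\max\bigl((a-b)_+,(b-c)_+\bigr)$ is correct, and the observation that both terms are \emph{downward} increments is exactly the right heuristic. This is essentially equivalent to the paper's five-term split. The target estimate $\E{e^{\lambda(\Lex_a-\Lex_b)_+}}\le C(\alpha,\delta)e^{(b-a)\lambda^\alpha}$ is also the correct thing to aim for.

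However, there is a genuine gap in the execution, and you diagnose the symptom but not the disease. The problem is not only that $\int_0^\infty j_a(x)\,\d x \propto a^{-1/\alpha}$ diverges as $a\downarrow 0$; it is that the specific combination of moves you make --- replacing $e^{\lambda u_+}$ by $1+e^{\lambda u}$ (which drops the constraint $y\le x$), then bounding $p^{(0,\infty)}_{b-a}(x,y)\le p_{b-a}(y-x)$, then bounding $q_y(1-b)\le C(\delta)$ --- is collectively too lossy. For small $x$, the true killed density $p^{(0,\infty)}_{b-a}(x,\cdot)$ has very small total mass because the process is quickly absorbed, but you discard both the killing and the constraint $y\le x$, and nothing remains to compensate. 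Your suggested repair (a), retaining $q_y(1-b)\le C(\delta)e^{-cy^{\alpha'}}$, does not fix this: the divergence comes from the small-$x$ regime, where the inner $y$-integral simply does \emph{not} vanish; the stretched-exponential decay of $q_y$ in $y$ supplies nothing like a factor of $x$. Your suggested repair (b), the first-passage bridge reformulation, does point in the right direction (it is, in effect, what the paper does), but ``a more delicate tail analysis'' is precisely the content of the lemma and is not supplied.

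The missing idea is to keep the indicator $\{y\le x\}$ and substitute $z=x-y\ge 0$. The inner integral becomes $\int_z^\infty j_a(x)\,q_{x-z}(1-b)\,\d x$, and the crucial point is that $q_w(1-b)=\frac{w}{1-b}\,p_{1-b}(-w)$ carries an explicit linear factor $w=x-z\le x$, giving
$$\int_z^\infty j_a(x)\,q_{x-z}(1-b)\,\d x \;\le\; \frac{\Vert p_\delta\Vert_\infty}{\delta}\int_0^\infty x\,j_a(x)\,\d x.$$
By the scaling $j_a(x)=a^{-2/\alpha}j_1(a^{-1/\alpha}x)$, the integral $\int_0^\infty x\,j_a(x)\,\d x=\int_0^\infty x\,j_1(x)\,\d x$ is finite (by integration by parts and $\bar J_1(x)\sim \bar c\,x^{-\alpha}$) and entirely independent of $a$. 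The outer $z$-integral then yields $\int_0^\infty p_{b-a}(-z)e^{\lambda z}\,\d z\le \bE[e^{-\lambda L_{b-a}}]=e^{(b-a)\lambda^\alpha}$. In other words, the uniformity in $a$ is not obtained by a ``delicate tail analysis'' on top of your bound; it requires retaining the constraint $y\le x$ so that the linear vanishing of the first-passage density at $0$ replaces the unweighted $\int j_a$ by the $a$-independent $\int xj_a$.
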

 
 \begin{proof}
 We split the expectation into five terms: 
 \begin{align*}
 \E{\exp\big(\lambda M(\Lex_s,\Lex_t,\Lex_u)\big)}
 & = \E{e^{\lambda(\Lex_s-\Lex_t)} \1{\Lex_t\le\Lex_s\le\Lex_u}} + \E{e^{\lambda(\Lex_t-\Lex_u)} \1{\Lex_s\le\Lex_u\le\Lex_t}} \\
 & \quad + \E{e^{\lambda(\Lex_t-\Lex_s)} \1{\Lex_u\le\Lex_s\le\Lex_t}} + \E{e^{\lambda(\Lex_u-\Lex_t)} \1{\Lex_t\le\Lex_u\le\Lex_s}} \\
 & \quad + \Prob{\{\Lex_s\le\Lex_t\le\Lex_u\}\cup\{\Lex_u\le\Lex_t\le\Lex_s\}} \\
 & \le 2\E{e^{\lambda(\Lex_s-\Lex_t)} \1{\Lex_t\le\Lex_s}} + 2\E{e^{\lambda(\Lex_t-\Lex_u)} \1{\Lex_u\le\Lex_t}} +1 .
 \end{align*}
 We see that the two expectation terms on the last line are of the same form $\E{e^{\lambda(\Lex_a-\Lex_b)} \1{\Lex_b\le\Lex_a}}$ where $a\le b$ with $b-a\le u-s$. For such $a,b$, letting $c=\alpha\Gamma(1-1/\alpha)$, we have
 \begin{align*}
 \E{e^{\lambda(\Lex_a-\Lex_b)} \1{\Lex_b\le\Lex_a}}
 & = c\int_0^\infty \mathrm dx \int_0^\infty \mathrm dy \: j_a(x) p^{(0,\infty)}_{b-a}(x,y) q_y(1-b) e^{\lambda(x-y)} \1{y\le x} \\
 & \le c\int_0^\infty \mathrm dz \: p_{b-a}(-z) e^{\lambda z} \int_z^\infty j_a(x) q_{x-z}(1-b) \d x ,
 \end{align*}
 where we have used the fact that $p^{(0,\infty)}_{b-a}(x,y)\le p_{b-a}(y-x)$ and a change of variables.
 We claim that the last integral in $x$ is uniformly bounded over $z\ge0$, $0\le a<b\le 1-\delta$.
 If we can prove this claim, then this will imply the existence of a finite constant such that
 $$ \E{e^{\lambda(\Lex_a-\Lex_b)} \1{\Lex_b\le\Lex_a}}
 \le C\E{e^{-\lambda L_{b-a}}} = C\exp\big((b-a)\lambda^\alpha\big)
 \leq C\exp\big((u-s)\lambda^\alpha\big) , $$
 for every $a,b\in[0,1-\delta]$ with $a\le b$ and $b-a\leq u-s$, which gives the result.
 \newline
 To prove the claim, note that
 \begin{align*}
 \int_z^\infty j_a(x) q_{x-z}(1-b) \d x
 & = \int_z^\infty j_a(x) \frac{x-z}{1-b}p_{1-b}(z-x) \d x \\
 & \le \frac{\Vert p_\delta\Vert_\infty}{\delta} \int_0^\infty xj_a(x) \d x \\
 & = \frac{ \Vert p_\delta\Vert_\infty }{\delta}\int_0^\infty x j_1(x) \d x,
 \end{align*}
 where in the last display we have used the scaling relation \eqref{eq:3} that implies that the integral does not depend on $a$.
 Letting $\bar J_1(x) = \int_x^\infty j_a(y)\d y$, we may integrate by
 parts and get an upper bound which is within a multiplicative
 constant of 
 $$ \big[x \bar J_1(x)\big]_0^\infty + \int_0^\infty \bar J_1(x) \d x . $$
 Now by \cite[(3.20)]{Monrad-Silverstein}, we have that $\bar J_1(x)
 \sim \bar c x^{-\alpha}$ as $x\to\infty$ for some finite constant
 $\bar c$, and the desired uniform upper bound follows.
 \end{proof}
 
 Our next lemma shows that $\Lex$ is exponentially well-behaved near time 1.
 
 \begin{lemma}\label{sec:expon-tightn-norm-1}
 For every $\lambda,\gamma>0$, there exists $\delta\in(0,1)$ such that
 $$ -\limsup_{\eps\downarrow0} \eps^{\alpha'} \log \Prob{\sup_{1-\delta\le t\le1} \eps\Lex_t\ge\gamma} \ge \lambda . $$
 \end{lemma}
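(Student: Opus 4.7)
The plan is to combine the first-passage bridge representation of the stable excursion near time $1$ with the asymmetric tail bounds for stable densities. Using \eqref{eq:15} with $y = \gamma/\eps$, we write
\[
\Prob{\sup_{1-\delta \leq t \leq 1} \Lex_t \geq y} = \alpha\Gamma\!\left(1-\tfrac{1}{\alpha}\right) \int_0^\infty j_{1-\delta}(x)\, q_x(\delta)\, \bP^\delta_x\!\left(\sup_{0\leq s\leq \delta} L_s \geq y\right) \d x,
\]
and split the integral at $x = y/2$. The intuition is that on the complementary events, the excursion either reaches height $\geq y/2$ at time $1-\delta$ (and then has to return to $0$ by time $1$) or starts below $y/2$ and subsequently climbs above $y$, forcing $L$ afterwards to descend from height $\geq y$ to $0$ in time at most $\delta$; both scenarios are exponentially costly due to the asymmetric tail bound \eqref{eq:11bis}.

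For $x \geq y/2$ one bounds $\bP^\delta_x(\cdot) \leq 1$ and estimates $\int_{y/2}^\infty j_{1-\delta}(x)\, q_x(\delta)\, \d x$ directly: $j_{1-\delta}$ is uniformly bounded thanks to the scaling \eqref{eq:3} and $\Vert j_1 \Vert_\infty < \infty$, while $q_x(\delta) = (x/\delta) p_\delta(-x)$ enjoys the bound from \eqref{eq:11bis}, so a Laplace-type tail estimate yields control of order $\exp\bigl(-c_\alpha(1-\eta)(y/2)^{\alpha'} \delta^{-1/(\alpha-1)}\bigr)$. For $x < y/2$ the crucial identity
\[
q_x(\delta)\, \bP^\delta_x(\tau_y \leq \delta) = \bE_x\!\left[\1{\tau_y \leq \delta,\, T_0 > \tau_y}\, q_{L_{\tau_y}}(\delta - \tau_y)\right],
\]
with $\tau_y = \inf\{s : L_s \geq y\}$, follows from the absolute continuity \eqref{eq:16} applied at $\delta' < \delta$, the strong Markov property at $\tau_y$, the Chapman--Kolmogorov identity $\bE_z[\1{T_0 > s}\, q_{L_s}(r)] = q_z(r+s)$ for the semigroup of $L$ killed at $0$, and monotone convergence as $\delta' \uparrow \delta$. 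On $\{\tau_y \leq \delta\}$ we have $L_{\tau_y} \geq y$ and $\delta - \tau_y \in (0, \delta]$; the scaling $q_z(s) = z^{-\alpha} q_1(s/z^\alpha)$ combined with the small-$u$ estimate $q_1(u) \leq C(\eta) \exp\bigl(-c_\alpha(1-\eta) u^{-1/(\alpha-1)}\bigr)$ extracted from \eqref{eq:9} then produces the uniform bound $q_{L_{\tau_y}}(\delta - \tau_y) \leq C y^{-\alpha} \exp\bigl(-c_\alpha(1-\eta) y^{\alpha'} \delta^{-1/(\alpha-1)}\bigr)$ as soon as $\eps$ is small enough that $\delta/y^{\alpha}$ lies in the regime of that asymptotic. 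Integrating against $j_{1-\delta}$, whose total mass $\Gamma(1-1/\alpha)^{-1}(1-\delta)^{-1/\alpha}$ is finite, yields the same control on the lower range.

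Combining the two ranges gives
\[
\limsup_{\eps \downarrow 0} \eps^{\alpha'} \log \Prob{\sup_{1-\delta \leq t \leq 1} \eps\, \Lex_t \geq \gamma} \leq -c_\alpha(1-\eta)\, \frac{(\gamma/2)^{\alpha'}}{\delta^{1/(\alpha-1)}},
\]
and choosing $\delta$ small enough (depending on $\lambda$ and $\gamma$) forces the right-hand side to be at most $-\lambda$. The main technical difficulty is the derivation of the key identity: because the first-passage bridge density in \eqref{eq:16} is singular at time $\delta$, one has to work at truncated times $\delta' < \delta$, carry out the strong Markov decomposition at $\tau_y$, use the killed-semigroup identity to rewrite $\bE_{L_{\tau_y}}[\1{T_0 > \delta' - \tau_y}\, q_{L_{\delta' - \tau_y}}(\delta - \delta')]$ as $q_{L_{\tau_y}}(\delta - \tau_y)$, and only then pass to the limit $\delta' \uparrow \delta$.
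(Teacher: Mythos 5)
Your proof is correct and follows essentially the same route as the paper: both use the first-passage bridge representation \eqref{eq:15}, the absolute continuity \eqref{eq:16} at truncated times $\delta'<\delta$, an optional stopping / strong Markov argument at the first passage above $\gamma/\eps$ (the paper's ``elementary martingale argument'' is precisely your strong-Markov-plus-Chapman--Kolmogorov computation), and the left-tail bound \eqref{eq:11bis}. The one cosmetic difference is that you split the $x$-integral at $y/2$ and bound the high range directly from the densities, whereas the paper first peels off the event $\{\eps\Lex_{1-\delta}\ge\gamma\}$ and disposes of it via Proposition \ref{prop:LDP-finite-dim} --- two ways of controlling the same contribution.
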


 \begin{proof}
 For all $\delta>0$, we have
 $$ \Prob{\sup_{1-\delta\le t\le 1} \eps\Lex_t\ge \gamma}
 = \Prob{\eps\Lex_{1-\delta}\ge\gamma} + \Prob{\sup_{1-\delta<t\le 1} \eps\Lex_t\ge\gamma, \eps\Lex_{1-\delta}<\gamma}. $$
 From Proposition \ref{prop:LDP-finite-dim}, 
 \begin{equation}\label{eq:1st-prob}
 \limsup_{\eps\downarrow 0}\eps^{\alpha'}\log \Prob{\eps\Lex_{1-\delta}\geq \gamma}
 \leq
 -c_\alpha\left(\frac{\gamma^\alpha}{\delta}\right)^{\frac{1}{\alpha-1}} .
 \end{equation}
 Let us prove now a similar bound for the second probability.
 By \eqref{eq:15}, we may recast it as 
 \begin{equation}\label{eq:law-n}
  \Prob{\sup_{1-\delta<t\le 1} \eps\Lex_t\ge\gamma,
    \eps\Lex_{1-\delta}<\gamma}= \alpha\Gamma\left(1-\frac{1}{\alpha}\right)
  \int_0^{\gamma/\eps} \mathrm{d} x \, j_{1-\delta}(x) q_x(\delta)
  \bP^\delta_x\left(\sup_{0\leq t\leq \delta}L\geq \gamma/\eps\right) .
  \end{equation}
Now note that $\bP^\delta_x(\sup_{0\leq t\leq \delta}L\geq \gamma/\eps)$
is the limit of $\bP^\delta_x(\sup_{0\leq t\leq \delta'}L\geq
\gamma/\eps)$ as $\delta'\uparrow \delta$. By the absolute continuity
relation~\eqref{eq:16} and an elementary martingale argument, the
latter can be rewritten as
$$\bE_x\left[\1{T_0>S}\1{S<\delta'}\frac{q_{L_S}(\delta-S)}{q_x(\delta)}\right]
,$$
where $S$ denotes the stopping time $\inf\{t\geq 0:
L_t>\gamma/\eps\}$. Finally, for every $\eta\in (0,1)$, we may use
\eqref{eq:11bis} to obtain 
$$ q_{\gamma/\eps}(\delta-S) \le C(\eta,\gamma) \exp\bigg( -(1-\eta) c_\alpha \left(\frac{\gamma^\alpha}{\eps^\alpha \delta}\right)^{\frac{1}{\alpha-1}}\bigg). $$
Since this bound does not depend on $\delta'$, plugging it into the
previous expectation gives
 \begin{multline*}
  \Prob{\sup_{1-\delta<t\le 1} \eps\Lex_t\ge\gamma,
    \eps\Lex_{1-\delta}<\gamma} 
    \\ \leq 
     \alpha\Gamma\left(1-\frac{1}{\alpha}\right)
  \int_0^{\gamma/\eps} \mathrm{d} x \, j_{1-\delta}(x) C(\eta,\gamma)
 \exp\bigg( -(1-\eta) c_\alpha \left(\frac{\gamma^\alpha}{\eps^\alpha \delta}\right)^{\frac{1}{\alpha-1}}\bigg). 
  \end{multline*}
Since $j_{1-\delta}$ is integrable, the desired bound follows. 
 \end{proof}

 \begin{proof}[Proof of Proposition \ref{prop:tightness}]
 Fix $\lambda>0$. By Lemma \ref{sec:expon-tightn-norm-1}, for every $n\ge1$, there exists a $\delta_n\in(0,1)$ such that for every $\eps>0$ small enough
 $$ \Prob{\sup_{1-2\delta_n<t\le 1}\eps\Lex\ge\frac{1}{2^n}} \leq
 \frac{\exp\big(-\lambda\eps^{-\alpha'}\big)}{2^n} . $$
For this choice of $\delta_n$, by Lemma \ref{sec:expon-tightn-norm} and Theorem \ref{sec:basic-results-skor-2}, for every $n\ge0$ there exists a compact set $K^{(n)}_\lambda$ of $\D[0,1]$ such that for every $\eps>0$ small enough,
 $$ \Prob{(\eps\Lex^{(n)}_t,0\le t\le 1-\delta_n)\not\in
   K^{(n)}_\lambda} \leq \frac{\exp\big(-\lambda
   \eps^{-\alpha'}\big)}{2^n} , $$
 where $\Lex^{(n)}$ is the process $(\Lex_{t\wedge (1-\delta_n)},0\leq
 t\leq 1)$. 
 We conclude by noting that the set $K_\lambda$ of functions $f\in \D[0,1]$
 such that for every $n\ge0$, $\big(f(t\wedge (1-\delta_n)),0\le t\le
 1\big)\in K^{(n)}_\lambda$ and $\sup_{1-2\delta_n\le t\le 1}|f(t)|\le
 2^{-n}$ is relatively compact, and, by the above, satisfies $\Prob{\eps\Lex\notin
 K_\lambda}\leq 2\exp(-\lambda\eps^{-\alpha'})$. 
 \end{proof}

%---------------------------------%
%%%---Proof of Thm 1---%%%
%---------------------------------%
\section{Proof of Theorem \ref{thm:LDP-excursion}}\label{section:thm1}

 In this section, we prove Theorem \ref{thm:LDP-excursion}.
We will do this by combining the exponential tightness with a weaker
form of the {\fontfamily{lmss}\selectfont LDP}, as we now explain. The {\em weak
   topology} $\mathcal{W}$ on $\D[0,1]$ is the topology generated by the basis of
 neighborhoods of the form
 $$N(f,t_1,\ldots,t_k,\eps_1,\ldots,\eps_k)
 =\big\{g\in \D[0,1]:|g(t_i)-f(t_i)|<\eps_i,1\leq i\leq k\big\}\, ,$$
 where $f\in \D[0,1]$,  $\eps_1,\ldots,\eps_k>0$, and $t_1,\ldots,t_k$ are
 elements of $[0,1]$ {\em that are continuity points of $f$}. Here,
 by convention, $0$ is a continuity point of $f$ if and only if
 $f(0)=0$, which is consistent with our convention that $f(0-)=0$.
Clearly this defines a Hausdorff topology, since two different
elements of $\D[0,1]$ necessarily differ at some common continuity point. 
It is easy to see that a sequence $(f_n,n\geq 0)$ that converges to a limit $f$ in
 $(\D[0,1],\mathrm{dist})$ also converges pointwise at every
 continuity point of $f$, and, therefore, will eventually belong to
 any given basic neighborhood
 $N(f,t_1,\ldots,t_k,\eps_1,\ldots,\eps_k)$ for the topology $(\D[0,1],\mathcal{W})$. 
Since open sets in the metric space $(\D[0,1],\mathrm{dist})$ can be
characterized sequentially, this implies that 
 the weak topology is coarser than the topology of
 $(\D[0,1],\mathrm{dist})$. 
Therefore, by \cite[Corollary 4.2.6]{Dembo-Zeitouni}, and by the
exponential tightness established 
 in Proposition \ref{prop:tightness}, Theorem \ref{thm:LDP-excursion} will follow from
 the following statement.
 \begin{proposition}
   \label{sec:proof-crefthm:ldp-ex}
   The laws of $(\eps\Lex_t)_{t\in[0,1]}$ satisfy an {\fontfamily{lmss}\selectfont LDP} in
$(\D[0,1],\mathcal{W})$ as $\eps\downarrow0$ with speed $\eps^{-\alpha'}$
and good rate function $I_\Lex$. 
 \end{proposition}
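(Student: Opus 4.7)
My plan is to combine the finite-dimensional {\fontfamily{lmss}\selectfont LDP} of Proposition~\ref{prop:LDP-finite-dim} with the Dawson--Gärtner projective limit theorem (\cite[Theorem~4.6.1]{Dembo-Zeitouni}) and then identify the resulting rate function with $I_\Lex$. Let $J$ denote the directed set of finite subsets $\sigma = \{t_1 < \cdots < t_n\} \subset (0,1)$ ordered by inclusion, with evaluation maps $\pi_\sigma(f) = (f(t_1), \dots, f(t_n))$. By Proposition~\ref{prop:LDP-finite-dim}, the pushforward $\pi_\sigma(\eps\Lex)$ satisfies an {\fontfamily{lmss}\selectfont LDP} with good rate function $J_\sigma$. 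Viewing $\eps\Lex$ as a random element of the projective limit $\R^{(0,1)}$ (equipped with the product topology), Dawson--Gärtner yields an {\fontfamily{lmss}\selectfont LDP} with good rate function
$$\tilde I(f) = \sup_{\sigma \in J} J_\sigma(\pi_\sigma f).$$
The topology $\mathcal{W}$ on $\D[0,1]$ is coarser than the product topology restricted to $\D[0,1]$, since any $\mathcal{W}$-basic neighborhood is in particular a product-topology neighborhood. Consequently, both {\fontfamily{lmss}\selectfont LDP} bounds transfer directly to $(\D[0,1], \mathcal{W})$ once we identify $\tilde I$ with $I_\Lex$ on $\D[0,1]$.

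The core technical step is the identification $\tilde I = I_\Lex$. The inequality $\tilde I \leq I_\Lex$ follows from Jensen's inequality for the convex function $x \mapsto x^{\alpha'}$: for $f \in \Hex$ one has $(f(t_i)-f(t_{i+1}))_+ \leq \fdo(t_{i+1}) - \fdo(t_i) = \int_{t_i}^{t_{i+1}} \fdo'(s)\,\d s$, hence
$$
(t_{i+1} - t_i)\left(\frac{(f(t_i) - f(t_{i+1}))_+}{t_{i+1} - t_i}\right)^{\alpha'}
\leq (t_{i+1}-t_i)\left(\frac{\fdo(t_{i+1}) - \fdo(t_i)}{t_{i+1}-t_i}\right)^{\alpha'}
\leq \int_{t_i}^{t_{i+1}} (\fdo'(s))^{\alpha'}\, \d s,
$$
and summing gives $J_\sigma(\pi_\sigma f) \leq I_\Lex(f)$. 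For the converse $\tilde I(f) \geq I_\Lex(f)$, suppose $\tilde I(f) < \infty$. One successively recovers: nonnegativity of $f$ (otherwise $J_{\{t_0\}}(f(t_0)) = \infty$ at any point $t_0$ with $f(t_0) < 0$); the endpoint condition $f(1) = 0$ (by letting $t_n \uparrow 1$, noting that the boundary term blows up as $(1-t_n)^{1-\alpha'}(f(t_n))_+^{\alpha'}$ when $f(1-) > 0$); and bounded variation (via a Hölder estimate on the discrete sums). To then extract the absolute continuity and $\Lp^{\alpha'}$-integrability of $\fdo'$, I would consider the step functions
$$\phi_\sigma = \sum_i \frac{(f(t_i) - f(t_{i+1}))_+}{t_{i+1}-t_i}\, \mathbbm{1}_{[t_i,t_{i+1})},$$
which form a bounded family in $\Lp^{\alpha'}$ by hypothesis; any weak limit along a refining sequence of partitions must coincide with $\fdo'$ (as seen by integrating against indicators of intervals and invoking the bounded-variation structure of $f$), and weak lower semicontinuity of the $\Lp^{\alpha'}$-norm yields $c_\alpha \int_0^1 (\fdo'(s))^{\alpha'}\,\d s \leq \tilde I(f)$.

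The main obstacle lies precisely in this last step---the passage from uniform control of the discrete $\Lp^{\alpha'}$-sums to the absolute continuity of $\fdo$ and the corresponding integral bound. One must also verify that $\tilde I(f) = \infty$ whenever $f \in \R^{(0,1)}$ does not admit a càdlàg representative in $\Hex$, in order to ensure that the {\fontfamily{lmss}\selectfont LDP} in $\R^{(0,1)}$ restricts properly to $\D[0,1]$ with rate function $I_\Lex$; this should again be a consequence of the fact that finiteness of $\tilde I$ already encodes strong downward regularity. Finally, the goodness of $I_\Lex$ in $(\D[0,1], \mathcal{W})$ follows from the compactness of the level sets $\Kex$ in $(\D[0,1], \dist)$ (to be established in Lemma~\ref{sec:proof-crefthm:ldp-fu}), since $\mathcal{W}$ is coarser than the topology induced by $\dist$.
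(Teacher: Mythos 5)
Your proposal has a genuine gap at the step of transferring the Dawson--Gärtner {\fontfamily{lmss}\selectfont LDP} from $\R^{(0,1)}$ with the product topology to $(\D[0,1],\mathcal{W})$. The assertion that ``both {\fontfamily{lmss}\selectfont LDP} bounds transfer directly to $(\D[0,1],\mathcal{W})$ once we identify $\tilde I$ with $I_\Lex$'' is not justified and, as stated, cannot be made to work. A $\mathcal{W}$-closed subset $F$ of $\D[0,1]$ is not closed in the product topology on $\R^{(0,1)}$ (and $\D[0,1]$ itself is not even a Borel subset of $\R^{(0,1)}$ in the cylinder $\sigma$-algebra). Passing to the product-topology closure $\bar F$ only gives an upper bound of $-\inf_{\bar F}\tilde I$, which is in general strictly smaller in absolute value than the required $-\inf_F I_\Lex$: the inequality goes the wrong way. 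This is exactly the obstruction the paper flags with the sentence ``We cannot immediately make use of this, since the domain of this rate function is not a space of càdlàg functions.''

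The paper circumvents the difficulty by proving the upper and lower bounds directly from the finite-dimensional {\fontfamily{lmss}\selectfont LDP} of Proposition~\ref{prop:LDP-finite-dim}, without invoking the Dawson--Gärtner {\fontfamily{lmss}\selectfont LDP} itself (Lemmas~\ref{sec:facts-about-rate-3} and~\ref{sec:facts-about-rate-4}). The key missing ingredient in your argument is Lemma~\ref{sec:facts-about-rate-2}, which states that for every $\mathcal{W}$-closed $F\subset\D[0,1]$,
\[
I_\Lex(F) = \sup_{\sigma\in\mathfrak{S}} I^\sigma_\Lex(F).
\]
Note that the pointwise identification $\tilde I = I_\Lex$ on $\D[0,1]$ only yields $\sup_\sigma I^\sigma_\Lex(F)\leq I_\Lex(F)$; the reverse inequality is proved by a compactness argument that replaces near-minimizers $f_\sigma$ of $I^\sigma_\Lex$ by piecewise affine interpolations $\hat f_\sigma$, uses goodness of $I_\Lex$ to extract a cluster point, and shows the cluster point lies in $F$. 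You should also note that $\mathcal{W}$ is not first-countable, so lower semicontinuity of $I_\Lex$ has to be checked with nets, as the paper does in Lemma~\ref{sec:facts-about-rate-1}.

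Your sketch of the converse identification $\tilde I(f)\geq I_\Lex(f)$ via weak $\Lp^{\alpha'}$-limits of the step functions $\phi_\sigma$ is a reasonable alternative to the paper's route (which uses difference quotients $f^{(n)}$, the Lebesgue differentiation theorem, and Fatou's lemma), but as you acknowledge, it leaves the absolute-continuity step unresolved: one cannot identify the weak limit with $\fdo'$ without first knowing that $\fdo$ is absolutely continuous, which requires a separate contradiction argument of the type the paper gives.
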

The remainder of this section is thus devoted to the proof of this
proposition, which follows the approach of %Lynch and Sethuraman 
\cite{Lynch} closely.

%-_oOo_---Facts about the rate function---_oOo_-%
 \subsection{Facts about the rate
   function}\label{sec:facts-about-rate}

 Denote by $\mathfrak{S}$ the set of finite subdivisions of $[0,1]$. 
 For
 $\sigma = (t_1,\ldots,t_n)\in\mathfrak{S}$, where $0<t_1<\cdots<t_n<1$, recall
 that for $x_1,\dots,x_n \in \R_+$, we let 
 $$ J_\sigma(x_1,\dots,x_n) = c_\alpha \sum_{i=1}^{n} (t_{i+1}-t_i) \left(\frac{(x_i-x_{i+1})_+}{t_{i+1}-t_i}\right)^{\alpha'} $$
 where, by convention, $x_{n+1}=0$ and $t_{n+1}=1$. We let
 $J_\sigma(x_1,\ldots,x_n)=\infty$ if one of the $x_i$'s is negative. 
 To ease notation, for $f:[0,1]\to \R$, we let 
$I_\Lex^\sigma(f)=J_\sigma(f(t_1),\ldots,f(t_n))$. 
 By the Dawson-G\"artner theorem (see \cite[Theorem 4.6.1]{Dembo-Zeitouni}), 
 it follows from Proposition \ref{prop:LDP-finite-dim} that
 the laws of $(\eps \Lex_t)_{t\in[0,1]}$ satisfy an
 {\fontfamily{lmss}\selectfont LDP} in $\R^{[0,1]}$ (with the product topology) as $\eps \downarrow 0$, with speed
 $\eps^{-\alpha'}$ and good rate function 
 \begin{equation}\label{formula I_chi}
 \widetilde{I}_\Lex(f) = \sup_{\sigma \in \mathfrak{S}} J_\sigma ( f(t_1),\dots, f(t_n) ).
 \end{equation}
We cannot immediately make use of this, since the domain of this rate function is not a space of c\`adl\`ag functions.  However, let us
prove some properties of the rate function $\widetilde{I}_\Lex$ and, in
particular, that its restriction to $\D[0,1]$ coincides with the rate function $I_\Lex$ given in Theorem 
 \ref{thm:LDP-excursion}. To this end, we prove the following
 proposition.

 \begin{proposition}\label{prop:criteria-Hex}
 A function $f\in \D[0,1]$ with $f\geq 0$ and $f(1)=0$ is in $\Hex$ if and only if
 $$ M_{\Lex}(f) = \sup_{\sigma \in \mathfrak{S}} I_\Lex^\sigma(f)< \infty . $$
 In this case, we have
 $$ M_{\Lex}(f) = I_\Lex(f)\, $$
 and, consequently, the functions $I_\Lex$ and $\widetilde{I}_\Lex$
 coincide on $\D[0,1]$. 
 \end{proposition}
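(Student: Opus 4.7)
The plan is to prove the two implications of Proposition \ref{prop:criteria-Hex} separately and extract the equality $M_\Lex(f)=I_\Lex(f)$ by combining the two bounds. The forward direction ``$f\in\Hex \Rightarrow M_\Lex(f)\leq I_\Lex(f)$'' is a short application of Jensen's inequality. The converse direction is more substantial and will rely on $\Lp^{\alpha'}$ weak compactness to recover the density $\fdo'$ from the dyadic partition sums.

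For the forward direction, I would assume $f\in\Hex$ and observe that, for any partition $0<t_1<\dots<t_n<1$ with the convention $t_{n+1}=1$,
$$(f(t_i)-f(t_{i+1}))_+\leq \fdo(t_{i+1})-\fdo(t_i)=\int_{t_i}^{t_{i+1}}\fdo'(s)\,\d s,$$
since $\fup$ is non-decreasing and $\fdo$ is absolutely continuous by hypothesis. Applying Jensen's inequality to $x\mapsto x^{\alpha'}$ on each sub-interval yields $I_\Lex^\sigma(f)\leq c_\alpha\int_0^1(\fdo'(s))^{\alpha'}\,\d s=I_\Lex(f)$, hence $M_\Lex(f)\leq I_\Lex(f)<\infty$.

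For the converse, assume $M_\Lex(f)<\infty$. H\"older's inequality with exponents $\alpha',\alpha$ (together with $\sum_i(t_{i+1}-t_i)\leq 1$) gives
$$\sum_i(f(t_i)-f(t_{i+1}))_+\leq (M_\Lex(f)/c_\alpha)^{1/\alpha'},$$
so the negative variation of $f$ is finite. Combined with boundedness of $f\in\D[0,1]$ and the conventions $f(0-)=f(1)=0$, this shows that the total variation of $f$ is finite and the Jordan decomposition $f=\fup-\fdo$ is available. I would then consider the dyadic partitions $\sigma_n$ with knots $i/2^n$, $1\leq i\leq 2^n-1$, and define step functions $g_n\in\Lp^{\alpha'}[0,1]$ by $g_n(s)=2^n(f(k/2^n)-f((k+1)/2^n))_+$ for $s\in[k/2^n,(k+1)/2^n)$ (and $g_n\equiv 0$ on $[0,1/2^n)$), so that $c_\alpha\|g_n\|_{\alpha'}^{\alpha'}=I_\Lex^{\sigma_n}(f)\leq M_\Lex(f)$. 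Extracting a weakly convergent subsequence $g_{n_j}\rightharpoonup g$ in $\Lp^{\alpha'}[0,1]$, I would identify $g$ with $\fdo'$ as follows: for any dyadic $0<a<b\leq 1$, the integrals $\int_a^b g_n\,\d s$ are monotone non-decreasing in $n$ (since the partition sums of negative increments increase under refinement, an elementary consequence of $(x-y)_+ + (y-z)_+\geq (x-z)_+$) and converge to the negative variation $V_-(f;[a,b])=\fdo(b)-\fdo(a)$; testing weak convergence against $\mathbf{1}_{[a,b]}$ and using right-continuity of $\fdo$ then yields $\fdo(b)-\fdo(a)=\int_a^b g\,\d s$ for every $0\leq a<b\leq 1$. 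Hence $\fdo$ is absolutely continuous with density $g\in\Lp^{\alpha'}$, so $f\in\Hex$. Weak lower semicontinuity of the $\Lp^{\alpha'}$-norm finally gives $I_\Lex(f)=c_\alpha\|g\|_{\alpha'}^{\alpha'}\leq \liminf_j c_\alpha\|g_{n_j}\|_{\alpha'}^{\alpha'}\leq M_\Lex(f)$, which together with the forward bound produces the desired equality.

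The main obstacle will be the identification of the weak subsequential limit $g$ with $\fdo'$. This rests on carefully matching the partial integrals $\int_a^b g_n\,\d s$ with the negative variation $\fdo(b)-\fdo(a)$, handling the contribution of the initial segment $[0,1/2^n)$ (which is missed by the partitions $\sigma_n$) and any possible jump of $f$ at $0$ so that they become negligible as $n\to\infty$. Once this identification is in hand, everything else is routine.
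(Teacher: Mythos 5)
Your proposal is correct, and the forward direction (Jensen plus monotonicity of $\fup$) coincides with the paper's. For the converse, however, you take a genuinely different route. The paper first proves by contradiction that $\fdo$ is absolutely continuous, using regularity of the measures $\d\fup$, $\d\fdo$ and the Lebesgue measure to produce finite unions of small intervals on which $\fdo$ gains mass $2\eps$ while $\fup$ gains less than $\eps$, and then obtaining a contradiction via H\"older. In a second step, it passes to dyadic partitions, invokes the Lebesgue differentiation theorem to get a.e.\ convergence of the difference quotients, and applies Fatou's lemma to obtain $c_\alpha\int_0^1(\fdo')^{\alpha'}\,\d s\leq M_\Lex(f)$ (after an extra argument using $\d\fupac\perp\d\fdo$ to kill the cross term). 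Your argument replaces both steps with a single functional-analytic one: since $\alpha'\in(2,\infty)$, $\Lp^{\alpha'}$ is reflexive, so the uniformly $\Lp^{\alpha'}$-bounded dyadic step functions $g_n$ admit a weakly convergent subsequence; the monotone convergence of the dyadic negative-variation sums identifies the weak limit with $\fdo'$ on dyadic intervals, which then extends to all of $[0,1]$ and gives absolute continuity of $\fdo$ for free; weak lower semicontinuity of $\|\cdot\|_{\alpha'}$ then produces the inequality $I_\Lex(f)\leq M_\Lex(f)$ in one stroke. This is a cleaner and more unified treatment of the converse (absolute continuity and the norm bound come out of the same step), at the cost of invoking the weak topology on $\Lp^{\alpha'}$; the paper's argument is more elementary but longer, requiring a separate contradiction argument before the Lebesgue differentiation step can even be applied. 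The one technical point you flag, the contribution of $[0,1/2^n)$, is indeed harmless: $g_n$ vanishes there and $\fdo(1/2^n)\to\fdo(0+)=0$ by right-continuity and $\d\fdo(\{0\})=0$ (any jump of $f$ at $0$ is non-negative and hence attributed to $\fup$), so the identification $\fdo(b)=\int_0^b g\,\d s$ goes through by letting dyadic $a\downarrow 0$.
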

 
 \begin{proof}
This statement should be compared with \cite[Theorem 3.2]{Lynch},
where the proof uses a martingale argument. We provide another elementary proof
here, based on the Lebesgue differentiation theorem instead. 
   For convenience, let $\Lambda(x) = c_\alpha(x_-)^{\alpha'}$ for all $x\in\R$.
 
 Let $f\in\Hex$, and write $f=\fup-\fdo$ for its  Jordan
 decomposition with absolutely continuous  $\fdo$, such that $\fdo'\in \Lp^{\alpha'}[0,1]$. 
 Let $\sigma = (t_1,\ldots,t_n)$ be a subdivision of $[0,1]$. Here and
 below, we adopt the notational convention that $t_0=0$ and $t_{n+1}=1$.
 Then
 \begin{eqnarray*}
 c_\alpha\sum_{i=0}^{n} \left( \frac{\big( f(t_i)-f(t_{i+1}) \big)_+^\alpha}{t_{i+1}-t_i} \right)^{\frac{1}{\alpha-1}}
 & = & \sum_{i=0}^{n} (t_{i+1}-t_i) \Lambda\left( \frac{f(t_{i+1})-f(t_i)}{t_{i+1}-t_i} \right) \\
 & \leq & \sum_{i=0}^{n} (t_{i+1}-t_i) \Lambda\left( \frac{\fdo(t_i)-\fdo(t_{i+1})}{t_{i+1}-t_i}  \right) \\
 & = & c_\alpha\sum_{i=0}^{n} (t_{i+1}-t_i) \left( \frac{1}{t_{i+1}-t_i} \int_{t_i}^{t_{i+1}} \fdo'(s) \d s \right)^{\alpha'} \\
 & \leq & c_\alpha\sum_{i=0}^{n} \int_{t_i}^{t_{i+1}} \fdo'(s)^{\alpha'} \d s \\
 & = & c_\alpha\int_0^1 \fdo'(s)^{\alpha'} \d s ,
 \end{eqnarray*}
 where we used the fact that $\fup$ 
 and $\Lambda$ are 
 non-increasing in the first inequality, and applied Jensen's inequality 
 in the second inequality. 
 Since this is true for any subdivision $\sigma\in \mathfrak{S}$, we get the first bound $M_{\Lex}(f) \leq \int_0^1 \fdo'(s)^{\alpha'} \, \textrm{d}s < \infty$.
 
 Conversely, assume that $f\in \Hex$ is not of bounded variation and  fix $A>0$. 
 Then there exists a subdivision $\sigma=(t_1,\ldots,t_n)$ such that
 $$ A
 < \sum_{i=0}^n \left| f(t_{i+1}) - f(t_i) \right|
 = \sum_{i=0}^n \big( f(t_i) - f(t_{i+1}) \big)_+ + \sum_{i=1}^n \big( f(t_i) - f(t_{i+1}) \big)_- . $$
 Furthermore
 $$ f(0)
 = \sum_{i=0}^n \big( f(t_i) - f(t_{i+1}) \big)
 = \sum_{i=0}^n \big( f(t_i) - f(t_{i+1}) \big)_+ - \sum_{i=0}^n \big( f(t_i) - f(t_{i+1}) \big)_-. $$
 This implies that
 $$ \sum_{i=0}^n \big( f(t_i) - f(t_{i+1}) \big)_+ \geq \frac{A + f(0)}{2} . $$
Therefore,
\begin{multline*}
 \frac{A+f(0)}{2}\leq
 \sum_{i=0}^n\big(f(t_i)-f(t_{i+1})\big)_+=\sum_{i=0}^n(t_{i+1}-t_i)^{1/\alpha}\frac{\big(f(t_i)-f(t_{i+1})\big)_+}{(t_{i+1}-t_i)^{1/\alpha}}\\
   \leq \left(\sum_{i=0}^n(t_{i+1}-t_i)\right)^{1/\alpha}
   \left(\sum_{i=0}^n
     \frac{\big(f(t_i)-f(t_{i+1})\big)_+^{\alpha'}}{(t_{i+1}-t_i)^{\frac{1}{\alpha-1}}}\right)^{1/\alpha'}\, ,
 \end{multline*}
 by Hölder's inequality, and 
 this entails that $M_{\Lex}(f)=\infty$. 
 By the contrapositive, this implies that if $M_{\Lex}(f)<\infty$, then $f$ has bounded variation. 
 Therefore, assuming that $M_{\Lex}(f)<\infty$, we may write
 $f=\fup-\fdo$ for the Jordan decomposition of $f$,  with $\fup,\fdo$
 nondecreasing and such that  $\fup(0)=0$ and $\mathrm d\fup \perp \mathrm d\fdo$. 
We proceed by contradiction. Suppose that $\fdo$ is not absolutely continuous. 
 Then there exists $\eps>0$ such that for all $k \geq 1$, there exists
 an open set of the form $U_k=\bigsqcup_{i=1}^{n(k)}(s_i^{(k)},t_i^{(k)})$
with
 $$ \sum_{i=1}^{n(k)} ( t^{(k)}_i - s^{(k)}_i ) < \frac{1}{k} \quad \textrm{ and } \quad \sum_{i=1}^{n(k)} \big(( \fdo( t^{(k)}_i ) - \fdo( s^{(k)}_i ) \big) > 2\eps . $$
 Moreover since $\mathrm d\fdo \perp \mathrm d\fup$, there exists a
 measurable set $B$ such that $\d \fdo(B^c)=\d \fup(B)=0$, and by
 regularity of the measures $\mathrm{Leb}, \d \fdo$ and $\d \fup$
 applied to the set $B\cap U_k$, we
 may find open sets $V_k^{(1)},V_k^{(2)}$ containing $B\cap
 U_k$ such that
 $$\mathrm{Leb}(V_k^{(1)})<\frac{1}{k}\,  \qquad \mbox{and}\qquad  \d
 \fup(V_k^{(2)})<\eps ,$$
 and by setting $V_k=V^{(1)}_k\cap V^{(2)}_k$, we see that these two
 inequalities remain true with $V_k$ in place of $V^{(1)}_k$ and
 $V^{(2)}_k$ respectively, while 
 $$\d\fdo(V_k)\geq \d \fdo(B\cap
 U_k)=\d\fdo(U_k)>2\eps .$$
By writing the open set $V_k$ as the limit of finite unions of open
intervals, we deduce that we may choose the family of intervals $\{
(s_i^{(k)},t_i^{(k)}), \, 1 \leq i \leq n(k) \}$ in such a way that
 $$ \sum_{i=1}^{n(k)} \big( \fup( t^{(k)}_i ) - \fup( s^{(k)}_i ) \big) < \eps . $$
 Then on the one hand we have
 \begin{eqnarray}
 \sum_{i=1}^{n(k)} \big( f( s^{(k)}_i ) - f( t^{(k)}_i ) \big)_+
 & = & \sum_{i=1}^{n(k)} \big( \fdo( t^{(k)}_i ) - \fdo( s^{(k)}_i ) - ( \fup( t^{(k)}_i ) - \fup( s^{(k)}_i ) ) \big)_+ \nonumber \\
 & \geq &  \sum_{i=1}^{n(k)} \big( \fdo( t^{(k)}_i ) - \fdo( s^{(k)}_i ) \big) - \sum_{i=1}^{n(k)} \big( \fup( t^{(k)}_i ) - \fup( s^{(k)}_i ) \big) \nonumber \\
 & \geq & \eps . \label{eq:h-abs-continuous1}
 \end{eqnarray}
 On the other hand, by H\"older's inequality, 
 \begin{eqnarray}
 \sum_{i=1}^{n(k)} \big( f( s^{(k)}_i ) - f( t^{(k)}_i ) \big)_+
 & \leq & \left( \sum_{i=1}^{n(k)} (t^{(k)}_i - s^{(k)}_i) \right)^{\frac{1}{\alpha}} \left( \sum_{i=1}^{n(k)} \frac{\big( f(s^{(k)}_i) - f(t^{(k)}_i) \big)_+^{\alpha'}}{(t^{(k)}_i - s^{(k)}_i)^{\frac{1}{\alpha-1}}} \right)^{1/\alpha'} \nonumber \\
 & \leq & \frac{1}{c_\alpha k^{1/\alpha}} M_{\Lex}(f). \label{eq:h-abs-continuous2}
 \end{eqnarray}
 But \eqref{eq:h-abs-continuous1} and \eqref{eq:h-abs-continuous2}
 combined contradict the assumption that $M_{\Lex}(f)<+\infty$. 
 Thus $\fdo$ is absolutely continuous. 

 Let us now prove that $\int_0^1 \fdo'(s)^{\alpha'} \d s \leq M_{\Lex}(f)$,
 which will prove that $f \in \Hex$, and that if $f \in \Hex$, then $M_{\Lex}(f) = \int_0^1 \fdo'(s)^{\alpha'} \d s$. 
 For $n\geq1$, define
 $$ f^{(n)}(t) = n \left( f\left( \frac{\left\lfloor (n+1)t \right\rfloor }{n} \right) - f\left( \frac{\left\lfloor nt \right\rfloor }{n} \right) \right) \textrm{ for } t\in[0,1), \quad f^{(n)}(1) = n \left( f( 1 ) - f\left( 1-\frac{1}{n} \right) \right). $$
By the Lebesgue decomposition theorem, we may write $\fup = \fupac + \fups$, where $\fupac$ is an absolutely
continuous function and $\fups$ is such that $\d\fups$ is singular with
respect to the Lebesgue measure. By the Lebesgue differentiation theorem, for Lebesgue-almost every $t\in[0,1]$ we have
 $$ f^{(n)}(t) \underset{n\rightarrow+\infty}{\longrightarrow} \fupac'(t) - \fdo'(t) .$$
 Considering the subdivision $\sigma_n = \left(0,\frac{1}{n},\frac{2}{n},\dots,1\right)$, we then have
 \begin{eqnarray*}
 M_{\Lex}(f) & \geq & \liminf_{n\rightarrow+\infty} J_{\sigma_n}(f) \\
 & = & \liminf_{n\rightarrow+\infty} \sum_{i=1}^n \frac{1}{n} \Lambda\left( f^{(n)}\left(\frac{i}{n}\right) \right) \\
 & = & \liminf_{n\rightarrow+\infty} \int_0^1 \Lambda\left( f^{(n)}(s) \right) \d s .
 \end{eqnarray*}
 By Fatou's lemma, we thus get
 $$ M_{\Lex}(f) \geq  \int_0^1 \Lambda\left( \fupac'(s) - \fdo'(s) \right) \d s . $$
 Recall that $\mathrm{d}\fup \perp \mathrm{d}\fdo$, so that we also have
 $\d\fupac\perp \d \fdo$.
But by the 
Lebesgue differentiation theorem, we have $\d\fupac(s)=\fupac'(s)\d s$
and $\d \fdo(s)=\fdo'(s)\d s$, so that the sets $\{\fupac'>0\}$ and
$\{\fdo'>0\}$ intersect in a set of zero Lebesgue measure. 
 Since $\fupac' \geq 0$ Lebesgue-a.e.,
 we have $\Lambda( \fupac' ) = 0$, 
which implies that
 $$ \int_0^1 \Lambda\left( \fupac'(s) - \fdo'(s) \right) \d s = \int_0^1 \Lambda\left( -\fdo'(s) \right) \, \textrm{d}s = c_\alpha\int_0^1 \fdo'(s)^{\alpha'} \d s, $$
 which concludes the proof.
 \end{proof}

In passing, we note that the reasoning at the end of this proof explains
why we may express the rate function $I_\Lex$ in the alternative form
\eqref{eq:17}. 

\begin{lemma}
  \label{sec:facts-about-rate-1}
  The function $I_\Lex$ is a good rate function on the spaces
  $(\D[0,1],\mathcal{W})$ and $(\D[0,1],\mathrm{dist})$. 
\end{lemma}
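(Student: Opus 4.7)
The plan is to verify the two defining properties of a good rate function---lower semicontinuity and compactness of sublevel sets---directly in $(\D[0,1],\mathrm{dist})$, and then to transfer both to $(\D[0,1],\mathcal{W})$ at no cost, since $\mathcal{W}$ is strictly coarser than the M1' topology. The key ingredient throughout is the representation $I_\Lex(f)=\widetilde I_\Lex(f)=\sup_{\sigma\in\mathfrak{S}}J_\sigma(f(t_1),\dots,f(t_n))$, valid on $\D[0,1]$ by Proposition~\ref{prop:criteria-Hex}.

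For lower semicontinuity, I will first reduce the sup defining $\widetilde I_\Lex$ to subdivisions whose points are continuity points of $f$: given $\sigma\in\mathfrak{S}$, since $f$ has only countably many discontinuities, each $t_i$ can be approximated from the right by continuity points $t_i^{(k)}$, and the right-continuity of $f$ together with the continuity of $J_\sigma$ in its arguments (the inter-point gaps remaining positive) gives $J_{\sigma^{(k)}}(f)\to J_\sigma(f)$. By the very definition of $\mathcal{W}$, $f_n\to f$ in $\mathcal{W}$ implies $f_n(t)\to f(t)$ at every continuity point $t$ of $f$, so $J_\sigma(f_n)\to J_\sigma(f)$ for any fixed $\sigma$ of the reduced form; taking the liminf and then the sup yields $\liminf_n I_\Lex(f_n)\geq I_\Lex(f)$. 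This proves lower semicontinuity on $\mathcal{W}$, hence (a fortiori) on the finer topology $\mathrm{dist}$.

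For compactness of the level set $\LL_{I_\Lex}(c)$, I will apply the criterion of Theorem~\ref{sec:basic-results-skor-1}. Setting $R=(c/c_\alpha)^{1/\alpha'}$, every $f\in\LL_{I_\Lex}(c)\subset\Hex$ has $\|\fdo'\|_{\alpha'}\leq R$; Hölder's inequality combined with the boundary condition $f(1)=0$, the convention $\fup(0)=0$, and the monotonicity of $\fup$ and $\fdo$ then yields a uniform sup-norm bound $\|f\|_\infty\leq R$. The main work is the uniform $M$-oscillation bound $w_M(f,\delta)\leq\delta^{1/\alpha}R$, which I intend to prove by a case analysis: for $0\leq t_1<t<t_2\leq 1$ with $t_2-t_1<\delta$, splitting according to whether $f(t)$ lies above or below both $f(t_1-)$ and $f(t_2)$ and exploiting the fact that every jump of $f$ comes from $\fup$ (so that the $\fup$-contribution to the $M$-oscillation vanishes), the quantity $M(f(t_1-),f(t),f(t_2))$ is controlled by $\fdo(t_2)-\fdo(t_1)\leq\delta^{1/\alpha}R$ via Hölder; the boundary case $t_1=0$ is handled separately using the convention $f(0-)=0$ together with $f\geq 0$ to obtain the same bound. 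Theorem~\ref{sec:basic-results-skor-1} then yields relative compactness in $(\D[0,1],\mathrm{dist})$, and closedness of $\LL_{I_\Lex}(c)$ follows from the lower semicontinuity already established, so $\LL_{I_\Lex}(c)$ is compact in $(\D[0,1],\mathrm{dist})$.

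The conclusion in $(\D[0,1],\mathcal{W})$ is then immediate: compactness in the finer topology $\mathrm{dist}$ automatically passes to the coarser topology $\mathcal{W}$, and lower semicontinuity on $\mathcal{W}$ was already verified directly. The hardest part of the proof will be the $M$-oscillation estimate, whose success depends critically on the asymmetry between the two parts of the Jordan decomposition: upward jumps are harmless because they are absorbed into the non-decreasing $\fup$, so that the $M$-oscillation ends up being controlled purely by the Hölder estimate on $\fdo'\in\Lp^{\alpha'}$.
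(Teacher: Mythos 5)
Your proof follows essentially the same route as the paper's: lower semicontinuity via the representation $I_\Lex=\sup_\sigma I^\sigma_\Lex$ restricted to continuity-point subdivisions, and compactness of level sets via a uniform sup-norm bound and a uniform $M$-oscillation bound (both coming from Hölder applied to $\fdo'$), followed by Theorem~\ref{sec:basic-results-skor-1}. Two small remarks. First, since $\mathcal{W}$ is not first-countable, "$f_n\to f$ in $\mathcal{W}$" should be read as a net (the paper says this explicitly); sequential lower semicontinuity is a priori weaker, though your argument transfers to nets verbatim, or can be recast as: for any $c<I_\Lex(f)$ pick a continuity-point subdivision $\sigma$ with $I^\sigma_\Lex(f)>c$, and then $\{g:\max_i|g(t_i)-f(t_i)|<\eps\}$ is a $\mathcal{W}$-open neighborhood of $f$ on which $I_\Lex\geq I^\sigma_\Lex>c$. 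Second, you actually spell out a step the paper leaves implicit, namely that $\sup_\sigma I^\sigma_\Lex(f)$ is unchanged if one restricts to subdivisions of continuity points of $f$; your right-approximation argument for this is correct, and your $M$-oscillation estimate is a touch sharper than the one in the paper (constant $1$ versus $4$), though of course both suffice.
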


\begin{proof}
Since the weak topology is not first-countable, we must initially use nets to characterise the lower-semicontinuity of $I_\Lex$. We first need to show that if $(f_\lambda)$ is a net that converges to $f$ in the weak topology, with
  $f_\lambda\geq 0$ and $f_\lambda(1)=f(1)=0$ for every $\lambda$, then
  $\liminf_\lambda\ I_\Lex(f_\lambda)\geq I_\Lex(f)$. Let $\sigma=(t_1,\ldots,t_k)$
  be a subdivision of continuity points of $f$, so that $f_\lambda(t_i)$ converges to 
 $f(t_i)$ for $1\leq i\leq k$, and 
  $I^\sigma_\Lex(f_\lambda)=J_\sigma(f_\lambda(t_1),\ldots,f_\lambda(t_k))$ converges to
  $I^\sigma_\Lex(f)=J_\sigma(f(t_1),\ldots,f(t_k))$. Since $I_\Lex(f_\lambda)\geq
  I^\sigma_\Lex(f_\lambda)$ by Proposition \ref{prop:criteria-Hex},
  this implies that $\liminf_\lambda I_\Lex(f_\lambda)\geq
  I^\sigma_\Lex(f)$. Applying Proposition \ref{prop:criteria-Hex}
  once again allows us to conclude that $I_\Lex$ is a rate function on
  $(\D[0,1],\mathcal{W})$, and therefore also on  $(\D[0,1],\mathrm{dist})$. 

Let us now prove that $I_\Lex$ is good on $(\D[0,1],\mathrm{dist})$,
which will imply the result.  Fix $c\in (0,\infty)$, and then pick $f\in \D[0,1]$ with
$I_\Lex(f)\leq c$ so that, in particular, $f(1)=0$ and $f$ has bounded variation. Let $s\leq t$ be in
$[0,1]$. Then, by Hölder's inequality, 
 $$ f(s)-f(t) \leq \fdo(t)-\fdo(s) = \int_s^t \fdo'(u) \d u\leq (c/c_\alpha) (t-s)^{1/\alpha}. $$
 Since $f(1)=0$, this implies that $f$ is uniformly bounded and, 
 moreover, that for every $s\leq t\leq u$ we have
 \[ M\big(f(s),f(t),f(u)\big) \leq 2\big((f(s)-f(t))\1{f(t)\leq
     f(s)}+(f(t)-f(u))\1{f(u)\leq f(t)}\big)\leq 4(c/c_\alpha) (u-s)^{1/\alpha}.\]
The conclusion now follows from Theorem \ref{sec:basic-results-skor-1}. 
\end{proof}

Next, for $A\subset \D[0,1]$, and for
$\sigma\in \mathfrak{S}$, we let
$$I_\Lex^\sigma(A)=\inf_{f\in A}I_\Lex^\sigma(f)\, \qquad \mbox{ and
}\qquad I_\Lex(A)=\inf_{f\in A}I_\Lex(f)\, .$$

\begin{lemma}
  \label{sec:facts-about-rate-2}
  For every closed set $F$ of $(\D[0,1],\mathcal{W})$, we have
  $$I_\Lex(F)=\sup_{\sigma\in \mathfrak{S}}I^\sigma_\Lex(F)\, .$$
\end{lemma}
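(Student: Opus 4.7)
The plan is to prove the easy direction $\sup_\sigma I^\sigma_\Lex(F) \leq I_\Lex(F)$ immediately from the pointwise bound $I^\sigma_\Lex \leq I_\Lex$ established in Proposition~\ref{prop:criteria-Hex}. For the reverse inequality, I set $c = \sup_\sigma I^\sigma_\Lex(F)$, assume $c < \infty$ (the case $c = \infty$ being trivial), fix $\delta > 0$, and aim to produce $f^* \in F$ with $I_\Lex(f^*) \leq c + \delta$; letting $\delta \downarrow 0$ then finishes the proof.

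For each $\sigma = (t_1,\ldots,t_n) \in \mathfrak{S}$, I select $f_\sigma \in F$ satisfying $I^\sigma_\Lex(f_\sigma) \leq c+\delta$, which in particular forces $f_\sigma(t_i) \geq 0$ for all $i$. I then replace $f_\sigma$ by its piecewise linear regularisation $\tilde f_\sigma$ on $[0,1]$, obtained by linear interpolation of the nodes $(0,0), (t_1,f_\sigma(t_1)), \ldots, (t_n,f_\sigma(t_n)), (1,0)$. Separating the increasing and decreasing pieces in the Jordan decomposition of this piecewise linear function shows $\tilde f_\sigma \in \Hex$ and yields the key identity $I_\Lex(\tilde f_\sigma) = I^\sigma_\Lex(f_\sigma) \leq c + \delta$, since only the decreasing segments contribute to $\fdo'$ and the sum reproduces exactly the terms of $J_\sigma$ (with the interval $[t_n,1]$ accounting for the $x_{n+1}=0$ convention).

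The regularised net $(\tilde f_\sigma)_{\sigma\in\mathfrak{S}}$ therefore lies entirely in the level set $\{I_\Lex \leq c+\delta\}$, which is $\mathcal{W}$-compact by Lemma~\ref{sec:facts-about-rate-1}. I extract a $\mathcal{W}$-convergent subnet $\tilde f_{\sigma_\lambda} \to f^*$, and $\mathcal{W}$-lower semicontinuity of $I_\Lex$ gives $I_\Lex(f^*) \leq c + \delta$, so in particular $f^* \in \Hex$. To place $f^*$ inside $F$, I will argue that $f_{\sigma_\lambda} \to f^*$ in $\mathcal{W}$: for any continuity point $t$ of $f^*$, the cofinality of the subnet in $(\mathfrak{S},\subseteq)$ ensures $t \in \sigma_\lambda$ eventually, at which point $f_{\sigma_\lambda}(t) = \tilde f_{\sigma_\lambda}(t) \to f^*(t)$ by the coupling built into the construction. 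Closedness of $F$ in $\mathcal{W}$ then forces $f^* \in F$, hence $I_\Lex(F) \leq I_\Lex(f^*) \leq c + \delta$.

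The main obstacle I anticipate is the interpolation identity $I_\Lex(\tilde f_\sigma) = I^\sigma_\Lex(f_\sigma)$, since this is precisely what allows the regularised net to live in a $\mathcal{W}$-good compact set while remaining coupled to the original $f_\sigma \in F$ through their common values on $\sigma$; everything else is a compactness-plus-coupling argument in which the crucial property is that agreement on $\sigma$ plus cofinality of the subnet transports $\mathcal{W}$-convergence of the $\tilde f_\sigma$ back to the $f_\sigma$'s, landing the limit $f^*$ in the closed set $F$.
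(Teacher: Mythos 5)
Your proof is correct and is essentially the same argument as the paper's. The paper phrases it as a proof by contradiction and speaks of cluster points, while you phrase it directly and extract a convergent subnet, but both hinge on exactly the same three ingredients: (i) replacing $f_\sigma$ by the piecewise affine interpolant on $\sigma$ (with zero endpoints) for which $I_\Lex(\tilde f_\sigma) = I^\sigma_\Lex(f_\sigma)$; (ii) compactness of level sets of $I_\Lex$ in $(\D[0,1],\mathcal{W})$ from Lemma~\ref{sec:facts-about-rate-1}; and (iii) transporting the $\mathcal{W}$-convergence of the interpolants back to the original $f_\sigma$'s via their agreement on $\sigma$ together with cofinality of the subnet in $(\mathfrak{S},\subseteq)$, which together with closedness of $F$ and lower semicontinuity of $I_\Lex$ finish the argument.
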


\begin{proof}
The proof follows that of \cite[Theorem 3.5]{Lynch} closely. 
  Since we know from Proposition \ref{prop:criteria-Hex} that
  $I^\sigma_\Lex(A)\leq I_\Lex(A)$ for every $\sigma\in \mathfrak{S}$
  and every set $A$, let us assume, for a contradiction, that
  $\sup_{\sigma\in \mathfrak{S}}I^\sigma_\Lex(F)<c< I_\Lex(F)$ for some constant $c\in
  (0,\infty)$. For every subdivision $\sigma=(t_1,\ldots,t_k)\in \mathfrak{S}$, we may
  find some element $f_\sigma\in \D[0,1]$ such that
  $I^\sigma_\Lex(f_\sigma)<c$. Let $\hat{f}_\sigma$ be the
  piecewise affine interpolation of the values of $f_\sigma$ at times $0<t_1<\ldots<t_k<1$,  with 
  $\hat{f}_\sigma(0)=\hat{f}_\sigma(1)=0$ and of course 
  $\hat{f}_\sigma(t_i)=f_\sigma(t_i),1\leq i\leq k$. 
Then, plainly,
$$I_\Lex(\hat{f}_\sigma)=I^\sigma_\Lex(\hat{f}_\sigma)=I^\sigma_\Lex(f_\sigma)<c\, $$
and, by
Lemma \ref{sec:facts-about-rate-1}, we obtain that
$\{\hat{f}_\sigma,\sigma\in \mathfrak{S}\}$ forms a relatively compact set
 in $(\D[0,1],\mathcal{W})$ (and even in
$(\D[0,1],\mathrm{dist})$). Let $f_0$ be a cluster point of this
set, and $\sigma'=(t_1',\ldots,t_l')\in \mathfrak{S}$ be a
subdivision consisting of continuity points of $f_0$. We fix $\eps>0$
and consider the weak neighborhood of $f_0$ defined by 
$$N_{\sigma',\eps}=\Big\{f\in \D[0,1]:\max_{1\leq i\leq
  l}|f(t'_i)-f_0(t'_i)|<\eps\Big\}\in \mathcal{W}\, .$$
For any  partition $\sigma''$ finer than $\sigma'$, there exists an
even finer $\sigma$ such that $\hat{f}_\sigma\in N_{\sigma',\eps}$,
since $f_0$ is a cluster point. But since $\hat{f}_\sigma$ agrees with
$f_\sigma$ on $\sigma$, it follows that $f_\sigma\in
N_{\sigma',\eps}$ and, therefore, that $f_0$ is also a cluster point
of $\{f_\sigma:\sigma\in \mathfrak{S}\}\subset F$. Since $F$ is closed,
we conclude that  $f_0\in F$, and that $I_\Lex(f_0)\leq c$ by lower
semicontinuity of $I_\Lex$. This contradicts the assumption that
$I_{\Lex}(F)>c$, and the result follows.
\end{proof}

We now have all the tools needed to prove Proposition
\ref{sec:proof-crefthm:ldp-ex}. The proof is split into two lemmas which
follow \cite[Theorems 4.1 and 4.2]{Lynch} closely.

\begin{lemma}
  \label{sec:facts-about-rate-3}
  If $F$ is a closed subset of $(\D[0,1],\mathcal{W})$, then
  $$\limsup_{\eps\downarrow 0}\eps^{\alpha'}\log \Prob{\eps\Lex\in
    F}\leq -I_\Lex(F)\, .$$
\end{lemma}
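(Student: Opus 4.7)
My plan is to reduce the upper bound to the finite-dimensional LDP (Proposition \ref{prop:LDP-finite-dim}) by invoking Lemma \ref{sec:facts-about-rate-2}, which expresses $I_\Lex(F)$ as $\sup_{\sigma \in \mathfrak{S}} I_\Lex^\sigma(F)$. Concretely, for each fixed subdivision $\sigma=(t_1,\ldots,t_n)\in \mathfrak{S}$, I would establish
\[
\limsup_{\eps\downarrow 0}\eps^{\alpha'}\log \Prob{\eps\Lex\in F}\leq -I_\Lex^\sigma(F),
\]
and then take the supremum over $\sigma$ to conclude.

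Fix $\sigma$ and let $\pi_\sigma \colon \D[0,1] \to \R^n$ denote the evaluation map $f\mapsto(f(t_1),\ldots,f(t_n))$. Since $\eps\Lex\in\Dex$ almost surely, we have the inclusion of events
\[
\{\eps\Lex\in F\}\subseteq \{\pi_\sigma(\eps\Lex)\in \overline{\pi_\sigma(F\cap \Dex)}\},
\]
and the set $\overline{\pi_\sigma(F\cap \Dex)}$ is closed (in particular Borel) in $\R^n$. Applying the upper bound of Proposition \ref{prop:LDP-finite-dim} to this closed set yields
\[
\limsup_{\eps\downarrow 0}\eps^{\alpha'}\log \Prob{\eps\Lex\in F}\leq -\inf_{x\in \overline{\pi_\sigma(F\cap \Dex)}} J_\sigma(x).
\]

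The main obstacle here is that $\pi_\sigma$ is \emph{not} continuous on $(\D[0,1],\mathcal{W})$, so $\pi_\sigma(F\cap \Dex)$ need not be closed in $\R^n$, and the infimum over its closure could in principle be strictly smaller than that over $\pi_\sigma(F\cap \Dex)$ itself. To overcome this, I exploit nonnegativity: since $\pi_\sigma(\Dex)\subset \R_+^n$ and $\R_+^n$ is closed, we have $\overline{\pi_\sigma(F\cap \Dex)}\subset \R_+^n$. Although $J_\sigma$ is merely lower semicontinuous on $\R^n$ (being $+\infty$ off $\R_+^n$), its restriction to $\R_+^n$ is continuous, as one sees directly from the explicit formula. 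Consequently, for any $x\in\overline{\pi_\sigma(F\cap \Dex)}$ with approximating sequence $x_k=\pi_\sigma(f_k)\in\pi_\sigma(F\cap \Dex)$, continuity gives
\[
J_\sigma(x)=\lim_k J_\sigma(x_k)\geq \inf_{f\in F\cap \Dex} J_\sigma(\pi_\sigma(f))=I_\Lex^\sigma(F),
\]
the final equality using that $J_\sigma(\pi_\sigma(f))=+\infty$ whenever $f\notin\Dex$.

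Combining the previous displays with Lemma \ref{sec:facts-about-rate-2} then yields
\[
\limsup_{\eps\downarrow 0}\eps^{\alpha'}\log \Prob{\eps\Lex\in F}\leq -\sup_{\sigma\in\mathfrak{S}} I_\Lex^\sigma(F) = -I_\Lex(F),
\]
which is the desired upper bound.
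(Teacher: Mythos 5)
Your argument is correct and reaches the same $\sigma$-wise bound as the paper, but through a different intermediate step. After fixing $\sigma$, the paper observes that $\{\eps\Lex\in F\}\subseteq\{I^\sigma_\Lex(\eps\Lex)\geq I^\sigma_\Lex(F)\}$ and then applies the contraction principle with $J_\sigma$ to obtain the scalar bound; in effect this is the finite-dimensional LDP upper bound applied to the closed set $\{J_\sigma\geq I^\sigma_\Lex(F)\}$, whose closedness only uses lower semicontinuity of $J_\sigma$. You instead push $F\cap\Dex[0,1]$ forward by $\pi_\sigma$, take its closure inside $\R_+^n$, and use continuity of $J_\sigma$ restricted to $\R_+^n$ to see that the closure does not decrease the infimum of $J_\sigma$. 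Both routes arrive at $\limsup_{\eps}\eps^{\alpha'}\log\Prob{\eps\Lex\in F}\leq -I^\sigma_\Lex(F)$ and then conclude via Lemma \ref{sec:facts-about-rate-2}; yours is a bit more hands-on, the paper's a bit slicker, and both exploit the nonnegativity of $\Lex$ to place the relevant vectors in $\R_+^n$. One small inaccuracy that does not affect the conclusion: your justification of the final equality $\inf_{f\in F\cap\Dex[0,1]}J_\sigma(\pi_\sigma(f))=I^\sigma_\Lex(F)$ --- namely that $J_\sigma(\pi_\sigma(f))=+\infty$ whenever $f\notin\Dex[0,1]$ --- is not correct, since $J_\sigma(\pi_\sigma(f))$ depends only on the finitely many values $f(t_1),\ldots,f(t_n)$ and is therefore finite for many $f\notin\Dex[0,1]$ (for instance any $f$ with $f(t_i)\geq 0$ for all $i$ but $f(1)\neq 0$, or with $f(s)<0$ at some $s\notin\sigma$). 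What you actually need, and what holds trivially because $F\cap\Dex[0,1]\subset F$, is the inequality $\inf_{F\cap\Dex[0,1]}J_\sigma\circ\pi_\sigma\geq\inf_{F}J_\sigma\circ\pi_\sigma=I^\sigma_\Lex(F)$, so your chain of estimates goes through.
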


\begin{proof}
  For any $\sigma=(t_1,\ldots,t_k)\in \mathfrak{S}$, it holds that
  $$\Prob{\eps\Lex\in F}\leq \Prob{I^\sigma_\Lex(\eps\Lex)\geq
    I^\sigma_\Lex(F)}\, .$$
  From the explicit form of $I^\sigma_\Lex(\eps\Lex)$, and the fact
  that $\eps( \Lex_{t_1},\ldots,\Lex_{t_k})$ satisfy an {\fontfamily{lmss}\selectfont LDP} with
  continuous rate function $J_\sigma$ by
  Proposition \ref{prop:LDP-finite-dim}, we obtain by the contraction
  principle that
  $$\limsup_{\eps\downarrow 0}\eps^{\alpha'}\log\Prob{\eps\Lex\in
  F}\leq -I^\sigma_\Lex(F)\, .$$
We conclude using Lemma \ref{sec:facts-about-rate-2}.    
\end{proof}

\begin{lemma}
  \label{sec:facts-about-rate-4}
  If $G$ is an open subset of $(\D[0,1],\mathcal{W})$, then
  $$\liminf_{\eps\downarrow 0}\eps^{\alpha'}\log \Prob{\eps\Lex\in
    G}\geq -I_\Lex(G)\, .$$
\end{lemma}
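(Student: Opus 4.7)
The plan is to combine the finite-dimensional LDP in Proposition \ref{prop:LDP-finite-dim} with the structure of the weak topology $\mathcal{W}$, following the standard strategy of \cite[Theorem 4.2]{Lynch}. The point is that basic open sets for $\mathcal{W}$ are cylindrical, so probabilities of such sets are directly controlled by the finite-dimensional LDP.

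We may assume $I_\Lex(G)<\infty$, else the bound is trivial. Fix $f\in G$ with $I_\Lex(f)<\infty$. By Proposition \ref{prop:criteria-Hex}, $f\in \Hex$ has bounded variation; in particular its set of discontinuity points in $[0,1]$ is at most countable. Since $G$ is open in $\mathcal{W}$, there exist $t_1,\dots,t_k\in(0,1)$ which are continuity points of $f$, and $\eps_1,\dots,\eps_k>0$, such that the basic neighborhood
\[N=N(f,t_1,\dots,t_k,\eps_1,\dots,\eps_k)=\big\{g\in\D[0,1]:|g(t_i)-f(t_i)|<\eps_i,\,1\le i\le k\big\}\]
satisfies $N\subset G$. (We can avoid the endpoints $0$ and $1$ by shrinking $N$: discarding coordinates at $0,1$ only enlarges $N$.)

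Write $\sigma=(t_1,\dots,t_k)$ and let $O=\prod_{i=1}^k(f(t_i)-\eps_i,f(t_i)+\eps_i)\subset\R^k$, which is open. By construction,
\[\Prob{\eps\Lex\in G}\ge\Prob{\eps\Lex\in N}=\Prob{\eps(\Lex_{t_1},\dots,\Lex_{t_k})\in O}.\]
Applying the lower-bound part of Proposition \ref{prop:LDP-finite-dim} to the open set $O$,
\[\liminf_{\eps\downarrow 0}\eps^{\alpha'}\log\Prob{\eps\Lex\in G}\ge -\inf_{O}J_\sigma.\]
Since $f\ge 0$ implies $(f(t_1),\dots,f(t_k))\in O\cap\R_+^k$, we have $\inf_OJ_\sigma\le J_\sigma(f(t_1),\dots,f(t_k))=I_\Lex^\sigma(f)$, which in turn is bounded by $I_\Lex(f)$ thanks to Proposition \ref{prop:criteria-Hex}. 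Therefore
\[\liminf_{\eps\downarrow 0}\eps^{\alpha'}\log\Prob{\eps\Lex\in G}\ge -I_\Lex(f).\]
Taking the infimum over all $f\in G$ with $I_\Lex(f)<\infty$ yields $\liminf\ge -I_\Lex(G)$, as required.

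There is essentially no serious obstacle: the finite-dimensional LDP does all the work, and the weak topology is tailor-made so that cylindrical neighborhoods form a base. The only mild care needed is in selecting the times $t_i$ to be continuity points of $f$ (possible because $f$ has bounded variation) and keeping them in the open interval $(0,1)$ where Proposition \ref{prop:LDP-finite-dim} applies.
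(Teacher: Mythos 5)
Your proof is correct and follows essentially the same route as the paper's: assume $I_\Lex(G)<\infty$, pick $f\in G$ with nearly optimal rate, use openness in $\mathcal{W}$ to extract a cylindrical neighborhood based at continuity points of $f$ lying in $(0,1)$, apply the lower bound of Proposition \ref{prop:LDP-finite-dim}, and use Proposition \ref{prop:criteria-Hex} to bound $I_\Lex^\sigma(f)\leq I_\Lex(f)$. Your version is marginally cleaner than the paper's in that you note directly that $(f(t_1),\dots,f(t_k))\in O$ gives $\inf_O J_\sigma\leq J_\sigma(f(t_1),\dots,f(t_k))$, avoiding the paper's ``letting $\delta\to 0$'' step.

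One small wording issue: the parenthetical ``We can avoid the endpoints $0$ and $1$ by shrinking $N$: discarding coordinates at $0,1$ only enlarges $N$'' is internally contradictory (dropping constraints enlarges, not shrinks, and an enlarged $N$ need not be contained in $G$). The correct justification is simpler: $\Lex_0=\Lex_1=0$ a.s., and for a continuity point at $0$ or $1$ the paper's conventions force $f(0)=0$ (and $f(1)=0$ since $f\in\Hex$), so any constraint at $t\in\{0,1\}$ in the basic neighborhood is automatically satisfied by $\eps\Lex$. Hence $\Prob{\eps\Lex\in N}$ is unchanged if those coordinates are discarded, and one can then apply Proposition \ref{prop:LDP-finite-dim} to the remaining interior times. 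This does not affect the validity of your argument, only the phrasing.
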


\begin{proof}
  Without loss of generality, we may assume that $I_\Lex(G)<\infty$. We
  then fix $\eps > 0$ and select $f\in G$ such that
  $I_\Lex(f)<I_\Lex(G)+\eps$. Then, we may find $\delta>0$ and a
  subdivision $\sigma=(t_1,\ldots,t_k)$ consisting of continuity points of $f$
  such that $\{g\in \D[0,1]:\max_{1\leq i\leq
    k}|g(t_i)-f(t_i)|<\delta\}$ is contained in $G$. We deduce that
  $$\Prob{\eps\Lex\in G}\geq \Prob{\eps
    (\Lex_{t_1},\ldots,\Lex_{t_k})\in G'}$$
  where $G'$ is the open set $\{(x_1,\ldots,x_k)\in \R^k:\max_{1\leq
    i\leq k}|x_i-f(t_i)|<\delta\}$. By Proposition
  \ref{prop:LDP-finite-dim}, we obtain that
  $$\liminf_{\eps\downarrow 0}\eps^{\alpha'}\log \Prob{\eps\Lex\in
    G}\geq -\inf_{(x_1,\ldots,x_k)\in G'}J_\sigma(x_1,\ldots,x_k)\,
  .$$
  Letting $\delta\to 0$, we may conclude that
  $$\liminf_{\eps\downarrow 0}\eps^{\alpha'}\log \Prob{\eps\Lex\in
    G}\geq -I^\sigma_\Lex(f)\geq -I_\Lex(f)\geq -I_\Lex(G)-\eps\,
  ,$$
  as desired. 
\end{proof}

%_oOo__oOo__oOo__oOo__oOo_%

%_oOo_---------------------------------_oOo_%
%%%_oOo_---Proof of Thm 2---_oOo_%%%
%_oOo_---------------------------------_oOo_%
\section{Consequences of the {\fontfamily{lmss}\selectfont LDP} for stable
  excursions}\label{sec:cons-ldp-stable}

In this section, we prove the remaining statements: 
Theorem \ref{thm:LDP-functional}, Corollaries \ref{cor:exact-asymp-Profeta}
and \ref{cor:exact-asymp-Profeta-sup}, and Proposition
\ref{thm:no-J1}.

%-_oOo_---Proof of thm functional LDP---_oOo_-%
\subsection{Proof of Theorem \ref{thm:LDP-functional}}\label{section:thm2}

We follow the approach of % Fill and Janson 
\cite{Fill-Janson}
closely. First, a direct consequence of the fact that $I_\Lex$ is a
good rate function is the following. 

\begin{lemma}\label{sec:proof-crefthm:ldp-fu}
 The set $\Kex$ defined at \eqref{eq:6} is a compact subset of $\D[0,1]$. 
 \end{lemma}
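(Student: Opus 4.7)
The plan is to recognize that $\Kex$ is nothing but a level set of the rate function $I_\Lex$. Indeed, from the definition \eqref{eq:rate-function} of $I_\Lex$ and the definition \eqref{eq:6} of $\Kex$, for any $f\in\Dex[0,1]$ we have
\[ I_\Lex(f) = c_\alpha\int_0^1 (\fdo'(s))^{\alpha'}\d s = c_\alpha\|\fdo'\|_{\alpha'}^{\alpha'} \]
whenever $f\in\Hex$, and $I_\Lex(f)=+\infty$ otherwise. Hence the condition $f\in\Kex$ is exactly equivalent to $I_\Lex(f)\leq c_\alpha$, so that
\[ \Kex = \LL_{I_\Lex}(c_\alpha) = \{f\in\D[0,1]:I_\Lex(f)\leq c_\alpha\}. \]

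Now invoke Lemma \ref{sec:facts-about-rate-1}, which asserts that $I_\Lex$ is a good rate function on $(\D[0,1],\dist)$, meaning that all of its level sets $\LL_{I_\Lex}(c)$ are compact. Applied with $c=c_\alpha$, this immediately gives the compactness of $\Kex$.

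I do not anticipate any obstacle here, since the work has already been done upstream: the alternative description of $I_\Lex$ via Proposition \ref{prop:criteria-Hex}, and the Hölder-type estimate together with Theorem \ref{sec:basic-results-skor-1} used in the proof of Lemma \ref{sec:facts-about-rate-1}, together do all the heavy lifting. The only thing to check explicitly is the identification $\Kex=\LL_{I_\Lex}(c_\alpha)$, which is a direct inspection of the defining formulas.
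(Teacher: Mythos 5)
Your proof is correct, and it fills in exactly the implicit argument the paper intends: the paper introduces the lemma with the words ``a direct consequence of the fact that $I_\Lex$ is a good rate function is the following,'' which is precisely the observation that $\Kex$ is the level set $\LL_{I_\Lex}(c_\alpha)$ and that Lemma \ref{sec:facts-about-rate-1} makes all such level sets compact in $(\D[0,1],\dist)$. Same approach, just made explicit.
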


 The argument for the proof of Theorem \ref{thm:LDP-functional}  is the same
 as \cite[p.415]{Fill-Janson}. We apply the contraction principle
 (\cite[Theorem 27.11]{Kallenberg}, \cite[Theorem
 4.2.1]{Dembo-Zeitouni}) to the continuous functional $\Phi: \Dex[0,1]
 \rightarrow \R_+$. This entails that $\varepsilon X = \Phi( \varepsilon \Lex )$ satisfies an {\fontfamily{lmss}\selectfont LDP} in $[0,\infty)$ with good rate function whose value at $x>0$ is given by
 \begin{eqnarray*}
 \inf_{f \in \Hex \: : \: \Phi(f) = x} c_\alpha \Vert \fdo' \Vert^{\alpha'}_{\alpha'}
 & = & \inf_{f \in \Hex \: : \: \Phi(f) \neq 0} c_\alpha \left\Vert \frac{x \, \fdo'}{\Phi(f)}\right\Vert^{\alpha'}_{\alpha'} \\
 & = & \inf_{f \in \Hex \: : \: \Phi(f) \neq 0} c_\alpha \left( \frac{x}{\Phi(f)} \right)^{\alpha'}\|\fdo'\|_{\alpha'}^{\alpha'} \\
 & = & \inf_{f \in \Hex \: : \: \, \Phi(f) \neq 0} c_\alpha \left( \frac{x}{\Phi(f/\|\fdo\|_{\alpha'})} \right)^{\alpha'} \\
 & = & \inf_{f \in \Hex \: : \:  \|\fdo'\|_{\alpha'}=1,\Phi(f) \neq 0} c_\alpha \left( \frac{x}{\Phi(f)} \right)^{\alpha'} \\ 
 & = & c_\alpha \left(\frac{x}{\gamma_\Phi}\right)^{\alpha'} .
 \end{eqnarray*}
 Taking $A = (1,\infty)$ and $\varepsilon = x^{-1}$ in the definition
 of an {\fontfamily{lmss}\selectfont LDP} proves \eqref{eq:asymp-functional}. 
 Finally, \eqref{eq:asymp-generating-function} and \eqref{eq:asymp-moments} follow from \eqref{eq:asymp-functional} by \cite[Theorem 4.5]{Chassaing-Janson}.
 This concludes the proof of Theorem \ref{thm:LDP-functional}.
 \hfill
 \qedsymbol

%-_oOo_---Applications---_oOo_-%
\subsection{Applications}\label{section:app}

Corollaries \ref{cor:exact-asymp-Profeta}
and \ref{cor:exact-asymp-Profeta-sup} are obtained by applying Theorem 
\ref{thm:LDP-functional} to the area and supremum functionals, which
are both positive-homogeneous and continuous for the M1 topology.

%-_oOo_---Area under excursion---_oOo_-%
 \subsubsection{Area under the normalized excursion}

 Let us compute the constant $\gamma_\Phi$ for the area under the normalized excursion $\Aex$, corresponding to the functional $\Phi(f) = \int_0^1 f(s) \d s$. So let
 $$ \gamma_{\int} = \max \bigg\{ \int_0^1 f(u) du : f\in \Kex \bigg\}. $$
 
 \begin{lemma}[Constant $\gamma_\Phi$ for $\Aex$]
 We have $\gamma_{\int} = (\alpha+1)^{-1/\alpha}$.
 \end{lemma}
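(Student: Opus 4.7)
The plan is to reduce the optimization to a one-variable Hölder calculation. For $f\in\Kex$, decompose $f=\fup-\fdo$ in the Jordan sense, where $\fdo$ is absolutely continuous with $\fdo(0)=0$ and $\fdo'\in\Lp^{\alpha'}[0,1]$, $\|\fdo'\|_{\alpha'}\leq 1$, and $\fup$ is nondecreasing with $\fup(0-)=0$ (so $\fup$ may carry a mass at $0$, consistent with our rooting convention). The constraints $f\geq 0$ and $f(1)=0$ translate into $\fup(t)\geq\fdo(t)$ for all $t\in[0,1]$ and $\fup(1)=\fdo(1)$.

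First I would fix $\fdo$ and maximize $\int_0^1 f$ over admissible $\fup$. Since $\fup$ is nondecreasing with $\fup(1)=\fdo(1)$, the pointwise largest such choice is the constant $\fup(t)\equiv \fdo(1)$, realized by a single jump at $0$. This choice preserves the Jordan conditions: $\d\fup=\fdo(1)\delta_0$ is singular with respect to $\d\fdo$ (which is absolutely continuous), and the resulting function
\[ f(t)=\fdo(1)-\fdo(t),\qquad t\in[0,1], \]
lies in $\Dex[0,1]\cap H^{(\alpha)}$, with the same $\fdo$.

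Next, I would apply Fubini and Hölder. Since $\fdo(1)-\fdo(t)=\int_t^1 \fdo'(s)\,\d s$, Fubini gives
\[ \int_0^1 f(t)\,\d t = \int_0^1 \int_t^1 \fdo'(s)\,\d s\,\d t = \int_0^1 s\,\fdo'(s)\,\d s, \]
and Hölder's inequality with exponents $(\alpha,\alpha')$ yields
\[ \int_0^1 s\,\fdo'(s)\,\d s \;\leq\; \Big(\int_0^1 s^\alpha\,\d s\Big)^{1/\alpha}\|\fdo'\|_{\alpha'} \;\leq\; (\alpha+1)^{-1/\alpha}. \]
Equality holds when $\fdo'(s)=(\alpha+1)^{1/\alpha'} s^{\alpha-1}$, which has $\|\fdo'\|_{\alpha'}=1$; the corresponding $f(t)=(\alpha+1)^{-1/\alpha}(1-t^\alpha)$ belongs to $\Kex$ and attains the bound. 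Hence $\gamma_{\int}=(\alpha+1)^{-1/\alpha}$.

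The only delicate point is justifying the reduction to $\fup(t)\equiv\fdo(1)$: one must check that the jump-at-$0$ variant remains in $\Hex$ (in particular that the Jordan decomposition conventions accommodate an atom of $\d\fup$ at $0$ under the rooting $\fup(0-)=0$) and that it beats any other admissible $\fup$ pointwise, which requires only the monotonicity of $\fup$ and the endpoint condition $\fup(1)=\fdo(1)$. All remaining steps are routine.
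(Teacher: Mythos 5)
Your approach is essentially the same as the paper's: the key upper bound is $\fup\leq\fdo(1)$ (from monotonicity and the endpoint condition), which reduces the integral to $\int_0^1 s\,\fdo'(s)\,\d s$, and then Hölder with exponents $(\alpha,\alpha')$ finishes. The paper gets there via integration by parts on $\int_0^1\fdo$ rather than your explicit "maximize over $\fup$ then Fubini" reorganization, but these are the same inequality.

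There is, however, a computational slip in your extremizer. With $\fdo'(s)=(\alpha+1)^{1/\alpha'}s^{\alpha-1}$ (which is correct and does have $\|\fdo'\|_{\alpha'}=1$), integrating gives $\fdo(t)=\frac{(\alpha+1)^{1/\alpha'}}{\alpha}t^\alpha$, so the resulting function is
\[
f(t)=\fdo(1)-\fdo(t)=\frac{(\alpha+1)^{1/\alpha'}}{\alpha}\,(1-t^\alpha),
\]
and not $(\alpha+1)^{-1/\alpha}(1-t^\alpha)$ as you wrote. Note that $\frac{(\alpha+1)^{1/\alpha'}}{\alpha}=\frac{\alpha+1}{\alpha}(\alpha+1)^{-1/\alpha}\neq(\alpha+1)^{-1/\alpha}$; the function you stated has $\|\fdo'\|_{\alpha'}=\alpha/(\alpha+1)<1$ and gives $\int_0^1 f=(\alpha+1)^{-1/\alpha}\cdot\frac{\alpha}{\alpha+1}$, which does not attain the bound. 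With the corrected constant the verification goes through: $\int_0^1 f(t)\,\d t=\frac{(\alpha+1)^{1/\alpha'}}{\alpha}\cdot\frac{\alpha}{\alpha+1}=(\alpha+1)^{1/\alpha'-1}=(\alpha+1)^{-1/\alpha}$, as claimed.
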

 
 \begin{proof}
 We first find an upper bound. Let $f\in \Kex$. Note that,
 integrating by parts, we have 
 \begin{align*}
 \int_0^1 f(s) \d s
 & = \int_0^1 \fup(s) \d s - \int_0^1 \fdo(s) \d s \\
 & = \int_0^1 (\fup(s)-\fdo(1)) \d s + \int_0^1 s\fdo'(s) \d s\\
 & \leq \bigg( \int_0^1 |\fdo'(s)|^{\alpha'} \d s \bigg)^{1/\alpha'}
   \bigg(\int_0^1 s^{\alpha} \d s\bigg)^{1/\alpha} \le
   (\alpha+1)^{-1/\alpha}\, ,
 \end{align*}
 where in the third line we have used the fact that $\fup(1)-\fdo(1)=f(1)=0$, which entails that $\fup\leq \fdo(1)$, and then H\"older's inequality. We obtain $\gamma_{\int} \le (\alpha+1)^{-1/\alpha}$.
 
 Now note that $f(s) = \frac{(\alpha+1)^{1/\alpha'}}{\alpha} (1 -
 s^{\alpha})$ lies in $\Kex$ and is such that  
 $$ \int_0^1 f(s) \d s = (\alpha+1)^{-1/\alpha}, $$
 so that $\gamma_{\int} = (\alpha+1)^{-1/\alpha}$ is indeed the optimum.
 \end{proof}

%-_oOo_---Supremum of excursion---_oOo_-%
 \subsubsection{Supremum of the normalized excursion}
 
 We now compute the constant $\gamma_\Phi$ corresponding to the
 functional $\sup_{0\le t\le 1} f(t)$ which is continuous for the M1
 Skorokhod topology.
 
 \begin{lemma}[Constant $\gamma_\Phi$ for $\sup\Lex$]
 We have $\gamma_{\sup} = 1$.
 \end{lemma}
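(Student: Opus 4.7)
The plan is to bound $\sup_{t\in[0,1]} f(t)$ from above for every $f\in\Kex$ using Hölder's inequality, and then exhibit an explicit extremizer showing this bound is attained.

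First I would set up the upper bound. Fix $f\in\Kex$ and write $f=\fup-\fdo$ for its Jordan decomposition, so $\fup$ is non-decreasing with $\fup(0)=0$, $\fdo$ is non-decreasing and absolutely continuous, and $\|\fdo'\|_{\alpha'}\le1$. The key observation is that since $f(1)=0$ we have $\fup(1)=\fdo(1)$, and hence for any $t\in[0,1]$,
\[
f(t) = \fup(t) - \fdo(t) \le \fup(1) - \fdo(0) = \fdo(1) - \fdo(0) = \int_0^1 \fdo'(s)\d s .
\]
By Hölder's inequality (using $\alpha'$ and $\alpha$ as conjugate exponents),
\[
\int_0^1 \fdo'(s)\d s \le \Big(\int_0^1 (\fdo'(s))^{\alpha'}\d s\Big)^{1/\alpha'}\cdot 1^{1/\alpha} = \|\fdo'\|_{\alpha'}\le 1 .
\]
Taking the supremum over $t$ and then over $f\in\Kex$ gives $\gamma_{\sup}\le 1$.

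For the matching lower bound I would simply test the candidate $f(t)=1-t$. Clearly $f\in\Dex[0,1]$ (nonnegative with $f(1)=0$), and its Jordan decomposition takes $\fup=0$ and $\fdo(t)=t-1$, so $\fdo'\equiv 1$ and $\|\fdo'\|_{\alpha'}=1$. Hence $f\in\Kex$ and $\sup_{0\le t\le 1}f(t)=f(0)=1$, yielding $\gamma_{\sup}\ge 1$.

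There is essentially no obstacle here: the only subtlety is making sure one applies the Jordan decomposition correctly (in particular that the constraint $\fup(0)=0$, together with $f\ge 0$ and $f(1)=0$, forces $\fup(t)\le\fdo(1)$, which is exactly what turns the pointwise bound into an $L^1$-bound on $\fdo'$). Once this is in place, Hölder immediately yields the bound $1$, and the linear function saturates the Hölder inequality because its derivative is constant. Combining the two bounds gives $\gamma_{\sup}=1$.
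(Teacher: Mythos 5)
Your proof is correct and follows essentially the same route as the paper: bound $f(t)\le\fup(1)-\fdo(\cdot)=\fdo(1)-\fdo(\cdot)$ using monotonicity of the Jordan components and $\fup(1)=\fdo(1)$, apply H\"older with exponents $\alpha'$ and $\alpha$ to deduce the bound $1$, and then check that $f(t)=1-t$ lies in $\Kex$ and attains it. (The paper bounds $f(t)\le\fdo(1)-\fdo(t)=\int_t^1|\fdo'|$ rather than $\fdo(1)-\fdo(0)=\int_0^1\fdo'$, but these differ only cosmetically.)
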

 
 \begin{proof}
 First, notice that if $f\in\Kex$, then using the fact $\fup\leq \fdo(1)$ we get that for all $t\ge0$, 
 $$ f(t) \le \fdo(1) - \fdo(t) = \int_t^1 |\fdo'(s)|\d s \le \int_0^1 |\fdo'(s)|\d s \le \left( \int_0^1 |\fdo'(s)|^{\alpha'} \d s\right)^{1/\alpha'} = 1 , $$
 where we used H\"older's inequality. We thus obtain the upper bound $\gamma_{\sup}\le 1$. 
 
 Now note that the function $f(t) = 1-t$ lies in $\Kex$ and satisfies $\sup f = 1$, so that $\gamma_{\sup} =1$.
 \end{proof}

%%%%
 \subsection{Negative results for the Skorokhod J1 topology}\label{subsection:no-J1}

 Here we give a sketch of proof for Proposition \ref{thm:no-J1}. The idea is that it is costless for
 the process $\eps\Lex$ to make macroscopic jumps within small time
 intervals, which prevents it from being concentrated in J1-compact
 sets. 
 Fix $\delta>0$. We note that
 \begin{align}
    \Prob{\eps\Lex_\delta\in [1,2],\eps\Lex_{2\delta}\in [3,4]}
  & = \alpha\Gamma\left(1-\frac{1}{\alpha}\right)\int_{1/\eps}^{2/\eps}\d
 x_1\, 
 j_{\delta}(x_1)\int_{3/\eps}^{4/\eps}\d x_2\,
 p_{\delta}^{(0,\infty)}(x_1,x_2)q_{x_2}(1-2\delta) \nonumber \\
& \geq
C(\delta)\eps^{2\alpha+2}\exp(-c_\alpha(3/\eps(1-2\delta))^{\alpha'})\, ,
\label{eq:23}
\end{align}
where we have used Lemmas \ref{lem:error-p_t}, 
\ref{lem:bound-error} and \eqref{eq:10} to bound
$p_{\delta}^{(0,\infty)}(x_1,x_2)$ uniformly from below by some constant times
$\eps^{\alpha+1}$, then \eqref{eq:11} to bound $q_{x_2}(1-2\delta)$
uniformly from below by some constant times
$\exp(-c_\alpha(3/\eps(1-2\delta))^{\alpha'})$, and finally
\eqref{eq:13} to bound the remaining integral.
Setting $\omega_{J1}(f,\eta)=\sup_{s<t<u,u-s\leq
  \eta}|f(u)-f(t)|\wedge |f(t)-f(s)|$, we obtain
$$\lim_{\delta\downarrow 0}\liminf_{\eps\downarrow
  0}\eps^{\alpha'}\log\Prob{\omega_{J1}(\eps\Lex,2\delta)> 1/2}
\geq -3^{\alpha'}c_\alpha\, .$$
Let $K$ be a compact subset of $\D[0,1]$ in the J1 topology, so that $\sup_{f\in K}\omega_{J1}(f,\delta)$ converges to $0$ as $\delta\downarrow 0$. In particular, there exists $\delta_0$ such that $\omega_{J1}(f,2\delta_0)\leq 1/2$ for every $f\in K$. Therefore, 
$$\liminf_{\eps\downarrow 0}\eps^{\alpha'}\log\Prob{\eps\Lex\notin K}
\geq \liminf_{\eps\downarrow 0}\eps^{\alpha'}\log\Prob{\omega_{J1}(\eps\Lex,2\delta_0)> 1/2}
\geq -3^{\alpha'}c_\alpha\, ,$$
and so $(\eps\Lex)_{0<\eps<1}$ cannot be exponentially tight.

%_oOo__oOo__oOo__oOo__oOo_%

%_oOo_---------------------------------------------------_oOo_%
%%%_oOo_---LDP for the Stable Bridge---_oOo_%%%
%_oOo_---------------------------------------------------_oOo_%
\section{{\fontfamily{lmss}\selectfont LDP} for the $\alpha$-stable L\'evy bridge}\label{section:bridge}

 In this section, we adapt the proof of the
 {\fontfamily{lmss}\selectfont LDP} for the normalized excursion in
 order to get an {\fontfamily{lmss}\selectfont LDP} for the L\'evy bridge.
 Roughly speaking the process $\bra$, called the $(0,0) \rightarrow (1,a)$ bridge, is obtained by conditioning $L$ to be equal to $a$ at time $1$.
 This is obviously a degenerate conditioning; however, it can be
 obtained by performing a space-time $h$-transform with respect to the function $\frac{p_{1-t}(a-L_t)}{p_1(a)}$ (see, for instance, \cite{Liggett}). This means that the law of $\bra$ may be defined by 
 \begin{equation}
   \label{eq:20}
     \bP^{\textrm{br}}(A) =
     \E{\frac{p_{1-t}(a-L_t)}{p_1(a)} \mathbbm 1_A}, \quad
     \forall A \in \Fil_t\, ,\quad t\in [0,1) .
   \end{equation}
   See \cite{Chaumont} or \cite[Chapter VIII]{Bertoin} for a rigorous construction.

%%%%%%%%%
 \subsection{Large deviations for the finite-dimensional marginal distributions}
 
  This section is devoted to proving that the finite-dimensional marginals of $\bra$ satisfy an {\fontfamily{lmss}\selectfont LDP} on $\R$.
  
  \begin{proposition}[{\fontfamily{lmss}\selectfont LDP} for the marginals of the stable bridge]\label{prop:LDP-marginals-bridge}
  Fix $a\in \R$, and let $(a_\eps)_{\eps>0}$ be such that $\eps a_\eps \to a$ as
  $\eps\to0$. Let $\sigma=(t_1,\ldots,t_n)$ be a finite subdivision of
  $[0,1]$. Under $\bP$, the laws of
  $\eps(\brae_{t_1},\dots,\brae_{t_n})$ satisfy an {\fontfamily{lmss}\selectfont LDP} in $\R^n$ with
  speed $\eps^{-\alpha'}$ and good rate function 
  $$ J_{\br,a}^\sigma(x_1,\dots,x_n)
  = c_\alpha \left( \left( \frac{(-x_1)_+^\alpha}{t_1} \right)^{\frac{1}{\alpha-1}} + \sum_{i=1}^{n-1} \left(\frac{(x_i-x_{i+1})_+^\alpha}{t_{i+1}-t_i}\right)^{\frac{1}{\alpha-1}} + \left( \frac{(x_n-a)_+^\alpha}{1-t_n} \right)^{\frac{1}{\alpha-1}}  - (a_-)^{\alpha'} \right) . $$
  \end{proposition}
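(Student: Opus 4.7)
The proof will parallel that of Proposition \ref{prop:LDP-finite-dim} closely, using the explicit density formula \eqref{eq:8} in place of \eqref{eq:marginal-laws}. The bridge marginals are in fact simpler to analyse, as there is no positivity constraint and hence no appearance of $p^{(0,\infty)}$, so each factor in the integrand is a bare transition density. The nontrivial new feature is the normalizing constant $p_1(a_\eps)^{-1}$, whose asymptotic behaviour as $\eps\downarrow 0$ is exactly what produces the additive term $-(a_-)^{\alpha'}$ in the rate function.

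First I would check that $J^\sigma_{\br,a}$ is a good rate function on $\R^n$: nonnegativity follows because the summand corresponding to the last interval $(t_n,1)$ is at least $c_\alpha(a_-)^{\alpha'}/(1-t_n)^{1/(\alpha-1)}\geq c_\alpha(a_-)^{\alpha'}$ (use that $1-t_n\leq 1$ and $\alpha>1$), and continuity plus coercivity of sublevel sets follows exactly as in the excursion case. Next, using the scaling asymptotics \eqref{eq:9}--\eqref{eq:10} and the hypothesis $\eps a_\eps\to a$, one verifies that
\begin{equation*}
\lim_{\eps\downarrow 0}\eps^{\alpha'}\log\frac{1}{p_1(a_\eps)}= c_\alpha (a_-)^{\alpha'},
\end{equation*}
the case $a\geq 0$ being polynomial (contributing $0$) and the case $a<0$ contributing exactly $c_\alpha|a|^{\alpha'}$ via \eqref{eq:9}.

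By \cite[Lemma 5]{Serlet}, it suffices to prove that for every open $G\subseteq\R^n$,
\begin{equation*}
\lim_{\eps\downarrow 0}\eps^{\alpha'}\log\Prob{\eps(\brae_{t_1},\ldots,\brae_{t_n})\in G}=-\inf_G J^\sigma_{\br,a}.
\end{equation*}
Writing, using \eqref{eq:8} and a change of variables,
\begin{equation*}
\Prob{\eps(\brae_{t_1},\ldots,\brae_{t_n})\in G}=\frac{\eps^n}{p_1(a_\eps)}\int_G\prod_{i=0}^{n}p_{t_{i+1}-t_i}\!\left(\frac{x_{i+1}-x_i}{\eps}\right)\d x_1\cdots\d x_n,
\end{equation*}
with $x_0=0$ and $x_{n+1}=\eps a_\eps$, the lower bound is obtained by selecting a near-minimizer $(y_1,\ldots,y_n)\in G$ with the $y_i$ distinct from each other and from $0$ and $a$, then taking a small hypercube $\prod(a_i,b_i)\subset G$ around it. For each consecutive pair for which the step $y_{i+1}-y_i$ is negative, we use the Gaussian-type lower bound in \eqref{eq:11} to get the exponential factor $\exp(-c_\alpha((y_i-y_{i+1})^\alpha/(t_{i+1}-t_i))^{1/(\alpha-1)}/\eps^{\alpha'})$ (up to $o(\eps^{\alpha'})$); pairs with nonnegative step contribute only a polynomial factor $\eps^{\alpha+1}$ via \eqref{eq:12}, which is negligible at speed $\eps^{-\alpha'}$. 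The endpoint contributions at $0$ (for $p_{t_1}(x_1/\eps)$) and at $a$ (for $p_{1-t_n}((\eps a_\eps-x_n)/\eps)$) are handled identically and produce the first and last terms of $J^\sigma_{\br,a}$. Combined with the $p_1(a_\eps)^{-1}$ asymptotic above, sending $a_i,b_i\to y_i$ yields the liminf bound. For the upper bound, I will use the uniform estimate $p_t(x)\leq C(\eta,t)\exp(-(1-\eta)c_\alpha(x_-)^{\alpha'}/t^{1/(\alpha-1)})$ valid for all $x\in\R$ (obtained by combining \eqref{eq:11bis} for $x<0$ with the trivial bound $\|p_t\|_\infty<\infty$ for $x\geq 0$), so that the integrand is bounded by $C\exp(-(1-\eta)\widetilde{J}(x_1,\ldots,x_n)/\eps^{\alpha'})$ where $\widetilde{J}=J^\sigma_{\br,a}+c_\alpha(a_-)^{\alpha'}$; dividing by $p_1(a_\eps)$ exactly cancels the offset and the standard Laplace-type argument (as in the proof of Proposition \ref{prop:LDP-finite-dim}) yields the limsup.

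The main obstacle, as is visible above, is the bookkeeping of the normalization: one must check that the exponential cost carried by $p_1(a_\eps)^{-1}$ combines with the contribution of the last density $p_{1-t_n}((\eps a_\eps-x_n)/\eps)$ to yield exactly $c_\alpha((x_n-a)_+^\alpha/(1-t_n))^{1/(\alpha-1)}-c_\alpha(a_-)^{\alpha'}$ in the rate function, rather than double-counting. Once the sign analysis is done correctly, no new analytic input beyond that used in Proposition \ref{prop:LDP-finite-dim} is required.
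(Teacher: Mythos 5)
Your overall approach matches the paper's, which similarly reduces the bridge case to the excursion case: the paper's own proof of Proposition \ref{prop:LDP-marginals-bridge} is essentially a one-paragraph sketch observing that the argument of Proposition \ref{prop:LDP-finite-dim} carries over using \eqref{eq:8} in place of \eqref{eq:marginal-laws}, that there is no $p^{(0,\infty)}$ to handle, and that the $-c_\alpha(a_-)^{\alpha'}$ offset in the rate function comes from $\lim_{\eps\downarrow 0}\eps^{\alpha'}\log p_1(a_\eps)=-c_\alpha(a_-)^{\alpha'}$. Your expanded version of this argument is faithful to that plan, and your identification of the role of the normalizing factor $p_1(a_\eps)^{-1}$ and your warning about avoiding double-counting in the last interval are both correct and exactly the points the paper highlights.

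One step in your write-up is, however, wrong as stated: you claim nonnegativity of $J^\sigma_{\br,a}$ because ``the summand corresponding to the last interval $(t_n,1)$ is at least $c_\alpha(a_-)^{\alpha'}/(1-t_n)^{1/(\alpha-1)}$.'' The last summand is $c_\alpha(x_n-a)_+^{\alpha'}/(1-t_n)^{1/(\alpha-1)}$, and when $a<0$ and $a<x_n<0$ (for instance $x_n=a/2$), one has $(x_n-a)_+<a_-$, so that single term is strictly smaller than your claimed lower bound; it is the contributions from the earlier intervals (in particular $(-x_1)_+$) that make up the shortfall. The correct route is to apply Jensen's inequality to the convex function $u\mapsto(u_-)^{\alpha'}$ across \emph{all} $n+1$ increments: writing $\Delta_i=x_{i+1}-x_i$, $\delta_i=t_{i+1}-t_i$ (with $x_0=0$, $x_{n+1}=a$, $\sum\delta_i=1$, $\sum\Delta_i=a$), one gets
$\sum_i\delta_i\,((\Delta_i)_-/\delta_i)^{\alpha'}\geq\big(\sum_i\Delta_i\big)_-^{\alpha'}=(a_-)^{\alpha'}$,
whence $J^\sigma_{\br,a}\geq0$. (Also, the polynomial prefactor in your change-of-variables should be $\eps^{-n}$ rather than $\eps^n$, though this is immaterial at the $\eps^{\alpha'}\log$ scale.)
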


The proof of this proposition is similar to that of Proposition
\ref{prop:LDP-finite-dim}, but is technically simpler and we only
explain where the argument differs.
It is not difficult to check that $J_{\br,a}$ is a good rate
  function, so \cite[Lemma 5]{Serlet} applies. 
We use this in the same way as in the proof of Proposition \ref{prop:LDP-finite-dim}, using the expression \eqref{eq:8}  for the marginal laws of the
bridge, and the 
  bounds \eqref{eq:11} for the stable transition densities. The term
  $-(a_-)^{\alpha'}$ in the definition of $J_{\br,a}$ arises from
  the contribution of the density $p_1(a_\eps)$ in the denominator of \eqref{eq:8}: 
  by \eqref{eq:11} and \eqref{eq:12}, we have 
$$ \lim_{\eps\downarrow0} \eps^{\alpha'}\log p_1(a_\eps)
  = -c_\alpha(a_-)^{\alpha'}.$$

%%%%%%%%%
 \subsection{Exponential tightness for the stable bridge}
 
  \begin{proposition}[Exponential tightness for the stable bridge]\label{prop:exponential-tightness-bridge}
  Under $\bP$, the laws of $(\eps \brae_t)_{t\in[0,1]}$ as $\eps\downarrow0$ are exponentially tight with speed $\eps^{-\alpha'}$.
  \end{proposition}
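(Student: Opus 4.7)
The plan is to mirror the proof of Proposition \ref{prop:tightness}: split the time interval into $[0,1-\delta]$ and $[1-\delta,1]$, obtain exponential tightness on each piece, and combine the resulting compact sets. Exponential tightness of each marginal $\eps\brae_q$ at speed $\eps^{-\alpha'}$ is immediate from Proposition \ref{prop:LDP-marginals-bridge}, since the rate function $J_{\br,a}^\sigma$ is good.

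For the interval $[0,1-\delta]$, the first step is to derive a moment bound of the type required by Theorem \ref{sec:basic-results-skor-2} via the $h$-transform representation \eqref{eq:20}. For $0\leq s\leq t\leq u\leq 1-\delta$,
\begin{equation*}
\E{\exp(\lambda M(\brae_s,\brae_t,\brae_u))}=\E{\exp(\lambda M(L_s,L_t,L_u))\,\frac{p_{1-u}(a_\eps-L_u)}{p_1(a_\eps)}}.
\end{equation*}
Bounding $p_{1-u}(\cdot)\leq c_2\delta^{-1/\alpha}$ uniformly and repeating the five-term decomposition of Lemma \ref{sec:expon-tightn-norm}, with the bound on $\E{e^{\lambda(\Lex_a-\Lex_b)}\1{\Lex_b\leq \Lex_a}}$ replaced by the exact identity $\E{\exp(-\mu(L_t-L_s))}=\exp((t-s)\mu^\alpha)$ for $\mu\geq 0$ (no conditioning is present here), yields
\begin{equation*}
\E{\exp(\lambda M(\brae_s,\brae_t,\brae_u))}\leq \frac{C(\delta)}{p_1(a_\eps)}\exp((u-s)\lambda^\alpha).
\end{equation*}
The prefactor $1/p_1(a_\eps)$ may grow like $\exp(c_\alpha (a_-)^{\alpha'}\eps^{-\alpha'})$ by \eqref{eq:11}, but inspection of the proof of Theorem \ref{sec:basic-results-skor-2} shows that an $\eps$-dependent constant $c(\eps)$ satisfying $\limsup\eps^{\alpha'}\log c(\eps)<\infty$ only shifts the final exponential bound by an additive constant that is ultimately beaten by letting $N\to\infty$. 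Consequently $(\eps\brae_t)_{t\in[0,1-\delta]}$ is exponentially tight in $\D[0,1-\delta]$ with speed $\eps^{-\alpha'}$.

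For the endpoint, we next prove the bridge analogue of Lemma \ref{sec:expon-tightn-norm-1}: for every $\lambda,\gamma>0$, there exists $\delta\in(0,1)$ with
\begin{equation*}
-\limsup_{\eps\downarrow 0}\eps^{\alpha'}\log \Prob{\sup_{1-\delta\leq t\leq 1}|\eps\brae_t-\eps a_\eps|\geq \gamma}\geq \lambda.
\end{equation*}
As for the excursion, this probability is split according to whether $|\eps\brae_{1-\delta}-\eps a_\eps|$ is above or below $\gamma/2$. The first case is controlled directly by Proposition \ref{prop:LDP-marginals-bridge}. In the second, conditioning on $\brae_{1-\delta}=x$ makes $(\brae_{1-\delta+s})_{0\leq s\leq \delta}$ a bridge of length $\delta$ from $x$ to $a_\eps$; a martingale argument using the absolute continuity \eqref{eq:20} together with the tail estimates \eqref{eq:11}--\eqref{eq:11bis} gives an exponentially small first-passage probability. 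Patching as in the proof of Proposition \ref{prop:tightness}, the compact set becomes
\begin{equation*}
K_\lambda=\bigcap_{n\geq 0}\Big\{f\in\D[0,1]:\,(f(t\wedge(1-\delta_n)))_{0\leq t\leq 1}\in K^{(n)}_\lambda,\ \sup_{1-2\delta_n\leq t\leq 1}|f(t)-f(1)|\leq 2^{-n}\Big\},
\end{equation*}
whose relative compactness in $(\D[0,1],\dist)$ follows from Theorem \ref{sec:basic-results-skor-1}.

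The main obstacle lies in the endpoint analysis: the bridge is pinned at $a_\eps$ rather than at $0$, so one must control deviations from $a_\eps$ and the estimates involve densities depending on $a_\eps$ in a non-uniform way. A secondary bookkeeping difficulty is the $\eps$-dependent prefactor $1/p_1(a_\eps)$ appearing in the $h$-transform when $a<0$, which does not affect the speed $\eps^{-\alpha'}$ but must be tracked carefully in the application of Theorem \ref{sec:basic-results-skor-2}.
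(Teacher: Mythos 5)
Your proof is correct but takes a genuinely different (and more labour-intensive) route than the paper's. You mirror the excursion argument: split $[0,1-\delta]$ from $[1-\delta,1]$, obtain a moment bound on the first piece via the $h$-transform \eqref{eq:20}, prove a bridge analogue of Lemma \ref{sec:expon-tightn-norm-1} for the second piece, and patch. The paper instead proves a \emph{single} moment bound valid over all of $[0,1]$ (Lemma \ref{lem:exponential-tightness-bridge}), so that Theorem \ref{sec:basic-results-skor-2} applies directly and no endpoint analysis is needed at all. The key device that makes this work, and that your proposal does not use, is the \emph{cyclic exchangeability of the bridge increments} (a direct consequence of \eqref{eq:8}): after writing out $\E{e^{\lambda(\brae_\rho-\brae_\sigma)}\1{\brae_\sigma\leq\brae_\rho}}$ as an integral against $p_\rho(x)\,p_{\sigma-\rho}(y-x)\,p_{1-\sigma}(a_\eps-y)/p_1(a_\eps)$, the bridge increment $\brae_\rho-\brae_\sigma$ depends in law only on $\sigma-\rho$, so one may assume $\sigma-\rho\leq 1/2$ and thus bound $\|p_{1-\sigma+\rho}\|_\infty\leq 2^{1/\alpha}\|p_1\|_\infty$ uniformly. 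This removes exactly the blow-up as $\sigma\to 1$ that in your approach forces the split. You correctly sensed that the endpoint is where your version has to work; the paper simply sidesteps it, exploiting a structural property the excursion does not enjoy (excursion increments are not cyclically exchangeable, which is why the original Proposition \ref{prop:tightness} genuinely needs the split).

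On the technical point you raise about the $\eps$-dependent prefactor: your observation that a factor $c(\eps)$ with $\limsup_{\eps}\eps^{\alpha'}\log c(\eps)<\infty$ only adds a bounded constant inside the $\limsup$ in Theorem \ref{sec:basic-results-skor-2}, and is therefore harmless after $N\to\infty$, is correct and worth stating. In fact this remark also quietly covers a small gap in the paper's own Lemma \ref{lem:exponential-tightness-bridge}: as written, its proof still carries the factor $1/p_1(a_\eps)$, which for $a<0$ grows like $\exp(c_\alpha(a_-)^{\alpha'}\eps^{-\alpha'})$ and so is \emph{not} a constant $C(\alpha)$; but by exactly your observation the conclusion of Proposition \ref{prop:exponential-tightness-bridge} is unaffected. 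Both approaches therefore reach the result, with the paper's version being cleaner (one lemma, one application of Theorem \ref{sec:basic-results-skor-2}) and yours being a direct transplant of the excursion argument that requires an additional endpoint lemma and more bookkeeping.
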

  
  Proposition \ref{prop:exponential-tightness-bridge} is a direct consequence of
  the tightness criterion in $(\D[0,1],\mathrm{dist})$ from Theorem \ref{sec:basic-results-skor-2} and the following lemma.
  
  \begin{lemma}\label{lem:exponential-tightness-bridge}
 There exists a constant $C=C(\alpha)>0$ such that for every $s,t,u\in [0,1]$ with $s\leq t\leq u$ and $\lambda\geq 0$,
  \begin{equation}
  \E{\exp\Big(\lambda M(\brae_s,\brae_t,\brae_u)\Big)} \leq C \exp\big((u-s)\lambda^\alpha\big) .
  \end{equation}
  \end{lemma}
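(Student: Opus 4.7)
The plan is to adapt the argument of Lemma~\ref{sec:expon-tightn-norm} to the bridge, using the $h$-transform representation~\eqref{eq:20} in place of the excursion density~\eqref{eq:marginal-laws}. First, I would split $\E{\exp(\lambda M(\brae_s,\brae_t,\brae_u))}$ according to the six possible orderings of $\brae_s,\brae_t,\brae_u$. Four of them force $\brae_t\in[\brae_s,\brae_u]$, contributing $1$. For the two non-trivial orderings (in which $\brae_t$ strictly exceeds or strictly falls below both endpoints), the pointwise inequality $M(x,y,z)\leq\min(|y-x|,|y-z|)$ reduces matters to bounding expectations of the form
\[\E{e^{\lambda(\brae_a-\brae_b)}\1{\brae_b\leq\brae_a}}\]
for $0\leq a\leq b\leq 1$ with $b-a\leq u-s$, where in each case I use the \emph{downward} increment — the ``easy'' direction for the spectrally positive process $L$.

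The second step is to compute this explicitly. Applying~\eqref{eq:20} at time $b$ (assuming $b<1$; the case $b=1$ follows by continuity since $\brae_1=a_\eps$ is deterministic), using the independence of $L_a$ and $L_b-L_a$ and the Chapman--Kolmogorov identity $p_a\ast p_{1-b}=p_{1-b+a}$, one obtains
\[\E{e^{\lambda(\brae_a-\brae_b)}\1{\brae_b\leq\brae_a}}
= \int_0^{\infty} p_{b-a}(-v)\,e^{\lambda v}\,\frac{p_{1-(b-a)}(a_\eps+v)}{p_1(a_\eps)}\,\d v.\]
The elementary estimate $\int_0^\infty p_{b-a}(-v)\,e^{\lambda v}\,\d v\leq \E{e^{-\lambda L_{b-a}}}=e^{(b-a)\lambda^\alpha}$ then delivers the desired exponential factor, \emph{provided} the density ratio $p_{1-(b-a)}(a_\eps+v)/p_1(a_\eps)$ can be bounded uniformly in $v\geq 0$ and $b-a\in[0,1]$ by some constant $C$.

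The main obstacle is precisely this uniform bound, which calls for the two-sided stable density estimates~\eqref{eq:11}--\eqref{eq:12}. The case $a_\eps\geq 0$ is easy: then $a_\eps+v\geq 0$ and~\eqref{eq:12} controls both numerator and denominator via the polynomial decay $(1+\cdot)^{-\alpha-1}$. The delicate case is $a_\eps<0$ with $a_\eps+v$ still negative: one must match the sub-Gaussian exponential decay in~\eqref{eq:11} of the numerator with the exponential lower bound there for $p_1(a_\eps)$, and tame the scaling factor $(1-(b-a))^{-1/\alpha}$ which arises when $b-a$ approaches~$1$. I would handle this last regime by splitting off a neighbourhood of $b-a=1$ and treating it separately, exploiting that in that regime $p_{b-a}(-v)$ is itself subject to the same sub-Gaussian decay, which compensates the blow-up of the prefactor.
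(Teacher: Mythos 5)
Your outline matches the paper's proof closely: the same five-way decomposition by ordering, the same reduction to bounding $\E{e^{\lambda(\brae_a-\brae_b)}\1{\brae_b\le\brae_a}}$ for the ``easy'' downward increment, and the same extraction of $\int_0^\infty p_{b-a}(-v)e^{\lambda v}\,\d v\le e^{(b-a)\lambda^\alpha}$ from the explicit density. Your use of Chapman--Kolmogorov to collapse the inner integral to $p_{1-(b-a)}(a_\eps+v)$ is in fact cleaner than the paper's presentation, which bounds the inner $x$-integral directly and uses the cyclic invariance of the bridge increments to reduce to the case $\sigma\le 1/2$ before invoking $\|p_{1-\sigma}\|_\infty\le 2^{1/\alpha}\|p_1\|_\infty$ — a trick your sketch replaces with ``splitting off a neighbourhood of $b-a=1$'', which serves the same purpose.

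However, the obstacle you isolate is misdiagnosed, and this is where the sketch has a genuine gap. For $a<0$ one has $a_\eps\to-\infty$, and the ratio $p_{1-(b-a)}(a_\eps+v)/p_1(a_\eps)$ is \emph{not} uniformly bounded in $v\ge 0$ — not even for $b-a$ bounded away from $1$. Take $b-a$ fixed and $v\approx |a_\eps|$: the numerator is $p_{1-(b-a)}(0)$, a constant, while the denominator is $p_1(a_\eps)\asymp\exp(-c_\alpha|a_\eps|^{\alpha'})$, so the ratio is of order $\exp(c_\alpha|a_\eps|^{\alpha'})$. The scaling factor $(1-(b-a))^{-1/\alpha}$ you flag is a secondary concern (and is already absorbed by the cyclic-invariance reduction to $\sigma\le 1/2$). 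The compensating factor $p_{b-a}(-v)$ that you invoke is real, but in your scheme it is already being spent on producing the target $e^{(b-a)\lambda^\alpha}$; you cannot also use it to kill the blow-up of the ratio without a fresh quantitative estimate that balances the two super-exponential regimes $p_{b-a}(-v)$ and $p_{1-(b-a)}(a_\eps+v)$ against $p_1(a_\eps)$ simultaneously. As written, the plan ``prove the ratio is uniformly bounded, then pull out the $z$-integral'' cannot work for $a<0$, so the reduction needs to be restructured (e.g.\ keeping the dropped indicator $\brae_s\le\brae_u$ which controls the deterministic downward trend, or estimating the full two-factor saddle point) rather than patched at the level of the ratio.
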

  
  \begin{proof}
  Splitting the expectation into five terms as at the beginning of the
  proof of Proposition \ref{prop:tightness}, we get the following bound
  \begin{multline*}
           \E{\exp\big(\lambda M(\brae_s,\brae_t,\brae_u)\big)} \\
           \leq 2 \E{e^{\lambda(\brae_s-\brae_t)} \1{\brae_t\leq \brae_s}} 
           + 2 \E{e^{\lambda(\brae_t-\brae_u)} \1{\brae_u\leq
               \brae_t}} + 1  .
         \end{multline*}
         We see that the last two terms are of the same form $\bE\Big[e^{\lambda (\brae_\sigma-\brae_\rho)} \1{\brae_\rho\leq \brae_\sigma}\Big]$ where $\rho\leq \sigma$ with $\sigma-\rho\leq u-s$.
  For such $\rho,\sigma$, we have
  \begin{align*}
  \E{e^{\lambda (\brae_\sigma-\brae_\rho)} \1{\brae_\rho\leq \brae_\sigma}}
  & = \int_{-\infty}^{+\infty} \d x \int_{-\infty}^{+\infty} \d y\, p_\rho(x) p_{\sigma-\rho}(y-x) \frac{p_{1-\sigma}(a_\eps-y)}{p_1(a_\eps)} e^{\lambda(x-y)} \1{y\leq x} \\
  & = \frac{1}{p_1(a_\eps)} \int_0^\infty \d z\,  p_{\sigma-\rho}(a_\eps-z) e^{\lambda z} \int_{-\infty}^{+\infty} \d x\, p_\rho(x) p_{1-\sigma}(z-x) ,
  \end{align*}
  where we have used the change of variables $z=x-y+a_\eps$. 
  It remains to show that the last integral in $x$ is uniformly
  bounded over $z\geq0$, $0\leq \rho < \sigma \leq 1$.
  Indeed this gives the existence of a constant $C<\infty$ such that
  $$ \E{e^{\lambda (\brae_\sigma-\brae_\rho)} \1{\brae_\rho\leq \brae_\sigma}} \leq C \E{e^{-\lambda \brae_{\sigma-\rho}}} = C \exp\big((\sigma-\rho)\lambda^\alpha\big) , $$
  which gives the result.

  However, by the
 cyclic invariance of the increments of $\bra$, which is a direct
 consequence of \eqref{eq:8}, we see that it suffices to check this
 boundedness assumption for $0\leq \rho<\sigma\leq 1/2$ say. 
 For such $\sigma,\rho$ we have 
  \begin{align*}
  \int_{-\infty}^{+\infty} \d x\, p_\rho(x) p_{1-\sigma}(z-x) 
  & = (1-\sigma)^{-\frac{1}{\alpha}} \int_{-\infty}^{+\infty} \d x\, p_\rho(x) p_1\left(\frac{z-x}{(1-\sigma)^{\frac{1}{\alpha}}}\right) \\
  & \leq 2^{\frac{1}{\alpha}} \Vert p_1 \Vert_\infty \int_{-\infty}^{+\infty} \d x\, p_\rho(x) \\
  & = 2^{\frac{1}{\alpha}} \Vert p_1\Vert_\infty \int_{-\infty}^{+\infty} \d x\, \rho^{-\frac{1}{\alpha}} p_1\left(\frac{x}{\rho^{\frac{1}{\alpha}}}\right) \\
  & = 2^{\frac{1}{\alpha}} \Vert p_1 \Vert_\infty \int_{-\infty}^{+\infty} \d x\, p_1(x) ,
  \end{align*}
  where the last integral does not depend on $\rho$. 
  \end{proof}

%%%%%%%% 
 \subsection{Proof of Theorem \ref{thm:LDP-bridge}}
 
 We may finally prove Theorem \ref{thm:LDP-bridge} using
 Propositions \ref{prop:LDP-marginals-bridge} and
 \ref{prop:exponential-tightness-bridge}. The scheme of proof is
 exactly the same as that in Section \ref{section:thm1}, and combines the exponential tightness in
 $(\D[0,1],\dist)$ with an {\fontfamily{lmss}\selectfont LDP} in the weak topology
 $(\D[0,1],\mathcal{W})$, similar to Proposition
 \ref{sec:proof-crefthm:ldp-ex}. Therefore, we will  give a brief
 account,  only pointing out the places where the formulas differ. 

 We can easily adapt the proof of Proposition \ref{prop:criteria-Hex} to get the
 following proposition. For a subdivision $\sigma=(t_1,\ldots,t_n)$
 and $f\in\D[0,1]$, 
 define $I_{\br,a}^\sigma(f)=J_{\br,a}^\sigma(f(t_1),\ldots,f(t_n))$. 
 
 \begin{proposition}
 A function $f\in \D[0,1]$ is in $\Hbra$ if and only if
 $$ M_{\br,a}(f) = \sup_{\sigma \in \mathfrak S} 
I_{\br,a}^\sigma(f)
 < \infty . $$
 In this case, we have
 $$M_{\br,a}(f) =I_{\br,a}(f).$$
 \end{proposition}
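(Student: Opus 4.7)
The plan is to closely follow the proof of Proposition \ref{prop:criteria-Hex}, the analogous statement for the excursion case, since the bridge case differs only by the addition of boundary terms at $t = 0$ and $t = 1$ in $J_{\br,a}^\sigma$ and by the additive constant $-(a_-)^{\alpha'}$. Two inequalities must be established: the upper bound $M_{\br,a}(f) \leq I_{\br,a}(f)$ whenever $f \in \Hbra$, and the converse implication $M_{\br,a}(f) < \infty \Rightarrow f \in \Hbra$ together with the matching lower bound.

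For the upper bound, I would write $f = \fup - \fdo$ for the Jordan decomposition (with $\fup(0) = 0$ and $\d\fup \perp \d\fdo$) and, with the convention $x_0 = 0$ and $x_{n+1} = a$, verify the key inequality
$$ (x_i - x_{i+1})_+ \leq \fdo(t_{i+1}) - \fdo(t_i) = \int_{t_i}^{t_{i+1}} \fdo'(s)\,\d s $$
for every $0 \leq i \leq n$. For $1 \leq i \leq n-1$ this is just the monotonicity of $\fup$, as in the excursion case. The boundary case $i = n$ additionally requires the identity $a = \fup(1) - \fdo(1)$ combined with $\fup(t_n) \leq \fup(1)$. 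Applying Jensen's inequality termwise and summing then yields $I_{\br,a}^\sigma(f) + c_\alpha(a_-)^{\alpha'} \leq c_\alpha\int_0^1 \fdo'(s)^{\alpha'}\,\d s$, and taking the supremum over $\sigma$ gives $M_{\br,a}(f) \leq I_{\br,a}(f)$.

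For the converse direction, I would mirror the three-step argument from Proposition \ref{prop:criteria-Hex}. First, if $f$ had infinite total variation, one extracts a subdivision with $\sum (f(t_i) - f(t_{i+1}))_+$ arbitrarily large -- now exploiting $\sum_{i=0}^n (f(t_i) - f(t_{i+1})) = -a$ in place of $f(0)$ -- and then applies Hölder's inequality to deduce that $M_{\br,a}(f) + c_\alpha(a_-)^{\alpha'}$ is unbounded, a contradiction. Second, if $\fdo$ were not absolutely continuous, the same mutual-singularity-plus-regularity argument leading to \eqref{eq:h-abs-continuous1}--\eqref{eq:h-abs-continuous2} produces, for any $k$, disjoint intervals $\{(s_i^{(k)}, t_i^{(k)})\}$ of total length $<1/k$ with $\sum(f(s_i^{(k)}) - f(t_i^{(k)}))_+ \geq \eps$, yielding a contradiction of exactly the same flavour. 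Third, once $\fdo$ is known to be absolutely continuous, I would use the uniform dyadic subdivisions $\sigma_n$ and the associated difference quotients $f^{(n)}$ which, by Lebesgue differentiation, converge a.e.\ to $\fupac' - \fdo'$; Fatou's lemma then gives
$$ M_{\br,a}(f) + c_\alpha(a_-)^{\alpha'} \geq \int_0^1 c_\alpha \big((\fupac'(s) - \fdo'(s))_-\big)^{\alpha'}\,\d s = \int_0^1 c_\alpha \fdo'(s)^{\alpha'}\,\d s, $$
where the last equality comes from $\d\fupac \perp \d\fdo$, which forces $\fupac' \cdot \fdo' = 0$ a.e.

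The main obstacle is essentially clerical: carefully tracking the two boundary terms (at $t = 0$ and $t = 1$) and the additive constant $-c_\alpha(a_-)^{\alpha'}$ through each Hölder, Jensen, and Fatou application. The genuinely new boundary effect -- the $(x_n - a)_+$ term at $t = 1$ -- is dispatched by the elementary identity $a = \fup(1) - \fdo(1)$; beyond this, the $-(a_-)^{\alpha'}$ shift merely appears additively on both sides of each estimate and cancels upon comparison.
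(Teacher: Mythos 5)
Your plan follows the paper's own one-line recipe for this proposition (``adapt Proposition \ref{prop:criteria-Hex}''), and the interior increments, the $i=n$ boundary term, the bounded-variation argument, the absolute-continuity-by-contradiction step, and the Fatou/Lebesgue-differentiation lower bound all transfer exactly as you say. The one genuine gap is the $i=0$ term in the upper bound. You assert the key inequality $(x_i-x_{i+1})_+\leq \fdo(t_{i+1})-\fdo(t_i)$ for \emph{all} $0\leq i\leq n$ with $x_0=0$, but only justify it for $i\geq 1$; the case $i=0$, namely $(0-f(t_1))_+\leq \fdo(t_1)-\fdo(0)$, does \emph{not} follow from monotonicity of $\fup$ alone. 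Writing $-f(t_1)=\fdo(t_1)-\fup(t_1)$ and $\fdo(0)=-f(0)$ (from the normalization $\fup(0)=0$), the inequality unwinds to $\fup(t_1)+f(0)\geq 0$, which fails for any $f$ with $f(0)<0$ once $t_1$ is small. This is not cosmetic: for such $f$ the first summand of $I^\sigma_{\br,a}(f)$, $c_\alpha(-f(t_1))_+^{\alpha'}/t_1^{1/(\alpha-1)}$, blows up as $t_1\downarrow 0$, so $M_{\br,a}(f)=\infty$, and the claimed identity $M_{\br,a}(f)=I_{\br,a}(f)$ can only hold if $\Hbra$ already excludes such $f$.

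This is the genuinely new boundary effect when passing from excursion to bridge: in Proposition \ref{prop:criteria-Hex} the hypothesis $f\geq 0$ hands you $f(0)\geq 0$ for free, whereas $\Dbra[0,1]$ carries no positivity constraint, so you must extract $f(0)\geq 0$ from membership in $H^{(\alpha)}$. The natural route is to invoke the rooting convention $f(0-)=0$ on the Jordan components: if $\fdo$ is to be absolutely continuous with $\fdo(0-)=0$, it cannot carry an atom at $0$, so $\fdo(0)=0$, whence $f(0)=\fup(0)\geq 0$ and the $i=0$ inequality collapses to the trivial $(\fdo(t_1)-\fup(t_1))_+\leq \fdo(t_1)$. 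You should make this step explicit; the $(x_n-a)_+$ term you singled out is in fact the straightforward boundary, since it is handled by the same monotonicity trick as the interior terms.
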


Then, analogs of Lemmas \ref{sec:facts-about-rate-1},
\ref{sec:facts-about-rate-2}, \ref{sec:facts-about-rate-3} and
\ref{sec:facts-about-rate-4} hold true with $I_{\br,a}$ in place of
$I_\Lex$,  with exactly the same proofs. This ends the proof of Theorem 
\ref{thm:LDP-bridge}.
 
Theorem \ref{thm:LDP-functional-bridge} can then be deduced from Theorem
\ref{thm:LDP-bridge} along the same lines as in Section
\ref{section:app}.

\section*{Acknowledgements}
 Thanks are due to Loïc Chaumont for an  interesting conversation
 around stable excursions. We also thank the anonymous referee for
 their careful reading of the paper, and for bringing a number of
 relevant references to our attention. %CG's research was supported by EPSRC Fellowship EP/N004833/1.

\bibliographystyle{alea3}
\bibliography{bib-LDPstable}

\end{document}